\numberwithin{equation}{section}
\newtheorem{theorem}{Theorem}[section]
\newtheorem{lemma}[theorem]{Lemma}
\newtheorem{proposition}[theorem]{Proposition}
\newtheorem{corollary}[theorem]{Corollary}
\theoremstyle{definition}
\newtheorem{definition}[theorem]{Definition}
\theoremstyle{remark}
\newtheorem{remark}[theorem]{Remark}
\newtheorem{example}[theorem]{Example}
\newtheorem{question}[theorem]{Question}
\newtheorem{acknowledgement}{Acknowledgement}
\newcommand{\Ass}{\operatorname{Ass}}
\newcommand{\Spec}{\operatorname{Spec}}
\newcommand{\rad}{\operatorname{rad}}
\newcommand{\cd}{\operatorname{cd}}
\newcommand{\Ht}{\operatorname{ht}}
\newcommand{\pd}{\operatorname{pd}}
\newcommand{\wdim}{\operatorname{wdim}}
\newcommand{\fd}{\operatorname{fd}}
\newcommand{\gd}{\operatorname{gl dim}}
\newcommand{\V}{\operatorname{V}}
\newcommand{\Wdim}{\operatorname{w.dim}}
\newcommand{\pgrade}{\operatorname{p.grade}}
\newcommand{\cgrade}{\operatorname{c.grade}}
\newcommand{\injdim}{\operatorname{id}}
\newcommand{\Ext}{\operatorname{Ext}}
\newcommand{\Tor}{\operatorname{Tor}}
\newcommand{\Hom}{\operatorname{Hom}}
\newcommand{\Char}{\operatorname{char}}
\newcommand{\depth}{\operatorname{depth}}
\newcommand{\rank}{\operatorname{rank}}
\newcommand{\Max}{\operatorname{Max}}
\newcommand{\vpl}{\operatornamewithlimits{\varprojlim}}
\newcommand{\lo}{\longrightarrow}
\newcommand{\fm}{\frak{m}}
\newcommand{\fp}{\frak{p}}
\newcommand{\fq}{\frak{q}}
\newcommand{\fa}{\frak{a}}
\newcommand{\fb}{\frak{b}}
\begin{document}

\author[Asgharzadeh]{Mohsen Asgharzadeh}
\title[Homological properties  of the perfect and  absolute integral closures ...]
{Homological properties  of the perfect and absolute integral
closures of Noetherian domains}
\address{M. Asgharzadeh, Department of Mathematics, Shahid Beheshti
University, Tehran, Iran-and-School of Mathematics, Institute for
Research in Fundamental Sciences (IPM), P.O. Box 19395-5746, Tehran,
Iran.} \email{asgharzadeh@ipm.ir}

\subjclass[2000]{13D05; 13D07; 13D30; 13B22; 13D45.}

\keywords{Absolute integral closure; almost zero modules;
(co)homological dimension; global dimension; local cohomology
modules; local global principle; perfect rings; perfect closure;
projective modules; non-Noetherian rings; torsion theory; valuation
domains.}

\dedicatory{Dedicated to Paul Roberts}

\begin{abstract}  For a Noetherian local domain $R$
let $R^+$  be the absolute integral closure  of $R$ and let
$R_{\infty}$ be the perfect closure of $R$, when $R$ has prime
characteristic. In this paper we investigate the projective
dimension of residue rings of certain ideals of $R^+$ and
$R_{\infty}$. In particular, we show that any prime ideal of
$R_{\infty}$ has a bounded free resolution of countably generated
free $R_{\infty}$-modules. Also, we show that the analogue of this
result is true for the maximal ideals of $R^+$, when $R$ has residue
prime characteristic. We compute global dimensions of $R^+$ and
$R_{\infty}$ in some cases. Some applications of these results are
given.
\end{abstract}

\maketitle

\section{Introduction}

For an arbitrary domain $R$, the absolute integral closure $R^+$ is
defined  as the integral closure of $R$ inside an algebraic closure
of the field of fractions of $R$. The notion of absolute integral
closure was first studied by Artin in \cite{Ar}, where among other
things, he proved  $R^+$ has only one maximal ideal $\fm_{R^+}$,
when $(R,\fm)$ is local and Henselian. In that paper Artin proved
that the sum of any collection of prime ideals of $R^+$ is either a
prime ideal or else equal to $R^+$.

As were shown by the works \cite{H1}, \cite{He1}, \cite{HH1},
\cite{HL} and \cite{Sm}, the absolute integral closure of Noetherian
domains has a great significance in commutative algebra. One
important result of Hochster and Huneke \cite[Theorem 5.5]{HH1}
asserts that $R^+$ is a balanced big Cohen-Macaulay algebra in the
case $R$ is an excellent local domain of prime characteristic, i.e.,
every system of parameters in R is a regular sequence on $R^+$. As
shown by Hochster, the flat dimension of the residue field of $R^+$
is bounded by dimension of $R$, when $R$ is a Henselian local domain
and has residue prime characteristic, see \cite[Proposition
2.15]{H1}. Furthermore,  the bound is achieved by all mixed
characteristic local domains if and only if the Direct Summand
Conjecture holds in mixed characteristic, see \cite[Theorem
3.5]{AH}. These observations along with \cite{RSS} motivate us to
investigate the projective dimension of residue rings of certain
ideals of the absolute integral closure and the perfect closure of
Noetherian domains. One of our main results is:

\begin{theorem}  \label{main} Let $(R,\fm)$ be a Noetherian
local Henselian domain of residue prime characteristic.
\begin{enumerate}
\item[i)] The $R^+$-module $R^+/ \fm_{R^+}$ has a free resolution of countably generated
free $R^+$-modules of length bounded by $2\dim R$. In particular,
$\pd_{R^+}(R^+/ \fm_{R^+})\leq2\dim R$.
\item[ii)] If  $R$ is one-dimensional  and complete,  then $\gd
R^+=\pd_{R^+}(R^+/ \fm_{R^+})=2$.
\end{enumerate}
\end{theorem}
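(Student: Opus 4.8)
The plan rests on Hochster's flat-dimension bound $\fd_{R^+}(R^+/\fm_{R^+})\le\dim R$ (\cite[Proposition 2.15]{H1}); the content of (i) is to convert this into a projective-dimension bound realised by \emph{countably generated} free modules, and (ii) is then a direct computation with $R^+$ as a valuation ring. Write $d=\dim R$ and fix a system of parameters $x_1,\dots,x_d$ of $R$. Since $R^+$ is integral over $R$ and $(x_1,\dots,x_d)R$ is $\fm$-primary, $\fm_{R^+}$ is the only prime of $R^+$ lying over $\fm$ and $\fm_{R^+}=\sqrt{(x_1,\dots,x_d)R^+}$, so $R^+/\fm_{R^+}$ is, up to radical, cut out by $d$ elements. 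The plan for (i) is: (a) use the flat-dimension bound to fix a flat resolution $0\to F_d\to\cdots\to F_0\to R^+/\fm_{R^+}\to0$; (b) resolve each $F_i$ by a free resolution of length $\le d$ consisting of countably generated free $R^+$-modules; (c) splice these by forming the total complex of a Cartan--Eilenberg resolution of $F_\bullet$, which, since $F_\bullet$ lives in degrees $0,\dots,d$ and each vertical resolution has length $\le d$, is a free resolution of $R^+/\fm_{R^+}$ of length $\le 2d$ whose terms, being finite direct sums of countably generated frees, are again countably generated free.

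\emph{Where the work is.} Step (b) is the crux, and splits according to characteristic. In equal prime characteristic $R^+$ is perfect, so $\fm_{R^+}=\bigcup_{n\ge0}(x_1^{1/p^n},\dots,x_d^{1/p^n})R^+$ is a countable ascending union of finitely generated ideals $J_n$; since $R^+$ is a balanced big Cohen--Macaulay $R$-algebra (\cite[Theorem 5.5]{HH1}), each $x_1^{1/p^n},\dots,x_d^{1/p^n}$ is an $R^+$-regular sequence and the truncated Koszul complex resolves $J_n$ by finitely generated frees; the telescope of the chain $(J_n)$ then resolves $\fm_{R^+}$, and hence $R^+/\fm_{R^+}$, by countably generated frees (here the length is in fact $\le d+1$, comfortably within $2d$). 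In mixed characteristic one has neither $p$-power roots nor a big Cohen--Macaulay property available; instead one must show that $\fm_{R^+}$ and its successive syzygies are countably generated -- using that $R^+$ is the filtered union of the module-finite normal local domains between $R$ and $R^+$, together with the almost-zero and local--global machinery set up earlier in the paper -- and then feed in the flat-dimension bound through the standard fact that a countably generated flat module has projective dimension at most one, so that a module admitting a free resolution by countably generated frees and having flat dimension $m$ has projective dimension $\le m+1$ (the $m$-th syzygy is countably generated flat). Proving that these syzygies are countably generated is the main obstacle, and the crude bookkeeping between the flat resolution $F_\bullet$ and the free resolutions of its terms is what produces the (possibly non-optimal) constant $2d$.

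\emph{The plan for (ii).} When $R$ is one-dimensional and complete, the Cohen structure theorem shows $R$ is module-finite over a complete discrete valuation ring $V$ with residue field $R/\fm$, so the fraction fields of $R$ and $V$ have a common algebraic closure $\overline K$ and $R^+=V^+$ is the integral closure of $V$ in $\overline K$. Since $V$ is Henselian, $V^+$ is the valuation ring of the unique extension of the valuation of $V$ to $\overline K$; its value group is all of $\mathbb Q$ (for each $n$ the Eisenstein polynomial $T^n-\pi$ over $V$, with $\pi$ a uniformizer of value $1$, supplies an element of value $1/n$) and its residue field is $\overline{V/\fm_V}$. Two facts then finish. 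First, $\fm_{V^+}$ is a non-zero idempotent ideal -- every element of $\fm_{V^+}$ is the square of an element of $\fm_{V^+}$ -- hence, by Nakayama, not finitely generated, hence not a projective $V^+$-module, so that $0\to\fm_{V^+}\to V^+\to V^+/\fm_{V^+}\to0$ forces $\pd_{V^+}(V^+/\fm_{V^+})\ge2$, while part (i) with $d=1$ gives the reverse inequality. Second, every ideal $I$ of $V^+$ is countably generated -- the value group $\mathbb Q$ being countable, the set of values occurring in $I$ is a countable upper set and $I$ is generated by any value-cofinal sequence in it -- and, being torsion-free over a valuation domain, is flat, hence of projective dimension $\le1$; so by Auslander's theorem that the global dimension is the supremum of the projective dimensions of the cyclic modules, $\gd R^+=\gd V^+=\sup_I\pd_{V^+}(V^+/I)\le2$. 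Combining, $\gd R^+=\pd_{R^+}(R^+/\fm_{R^+})=2$.
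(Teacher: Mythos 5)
Your treatment of part (ii) is essentially correct and parallels the paper: both rest on the facts that $R^+$ is a valuation domain with value group $\mathbb{Q}$ (Lemma \ref{val}), that $R^+$ is $\aleph_0$-Noetherian (Lemma \ref{alef}), and that torsion-free modules over a valuation domain are flat, so that Osofsky's bound $\pd \le \fd + 1$ forces $\gd R^+ \le 2$. Your derivation of $\pd_{R^+}(R^+/\fm_{R^+}) \ge 2$ directly from idempotence of $\fm_{R^+}$ plus Nakayama is a clean alternative to the paper's route through Lemma \ref{key}.

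Part (i), however, has a genuine gap, and it is not merely the point you concede at the end. Your plan (a)--(c) begins with an \emph{arbitrary} flat resolution $0 \to F_d \to \cdots \to F_0 \to R^+/\fm_{R^+} \to 0$ supplied by Hochster's flat-dimension bound, and then asks in step (b) to resolve each $F_i$ by a length-$\le d$ free resolution of countably generated frees. Nothing in the hypotheses gives you this: a flat $R^+$-module appearing in a generic flat resolution need not be countably generated, need not be countably presented, and has no a priori bound on the size or length of any free resolution. So even the splicing in step (c) never gets off the ground. Your prime-characteristic paragraph actually abandons the (a)--(c) scheme and sketches a different argument (Koszul complexes on $p^n$-th roots of the parameters, then a telescope), which is in the right spirit but leaves unaddressed how the mapping telescope of Koszul complexes along the chain $J_n \subseteq J_{n+1}$ is organized into a single free resolution of $\fm_{R^+}$ with the claimed length and countable generation; and you explicitly flag the mixed characteristic case as unresolved.

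The paper avoids all of this by never invoking a flat resolution of $R^+/\fm_{R^+}$ at all. Instead, Lemma \ref{key} produces for each single element $x$ an \emph{explicit} length-one free resolution of $(x^{\infty})R^+$ by countably generated frees, exhibiting concrete generators $\eta_n = e_n - x^{1/n(n+1)}e_{n+1}$ (or $\eta_n' = e_n - x^{(p-1)/p^{n+1}}e_{n+1}$ in characteristic $p$) for the kernel and verifying by hand that they form a basis. This handles characteristic zero and prime characteristic, and the mixed characteristic case of Lemma \ref{gen} then requires only that the first element be $p$. Lemma \ref{gen} tensors $d$ of the resulting length-two resolutions of $R^+/(x_i^{\infty})R^+$ together; the resulting complex is a resolution because $(x_1^{\infty})R^+$ is a directed union of free cyclic modules (hence flat, killing $\Tor_{n}$ for $n\ge 2$) and because Hochster's identity $(x_1^{\infty})R^+ \cap \sum_{i\ge 2}(x_i^{\infty})R^+ = (x_1^{\infty})R^+\cdot\sum_{i\ge 2}(x_i^{\infty})R^+$ kills $\Tor_1$. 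The bound $2d$ then follows once one knows (Corollary \ref{local}) that $\sum_{i=1}^d (x_i^{\infty})R^+ = \fm_{R^+}$ for a system of parameters $x_1,\dots,x_d$. Replacing your steps (a)--(c) by this explicit construction-and-tensor argument is precisely what is missing from your proposal.
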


Let $R$ be a ring of prime characteristic $p$. Recall that the
perfect closure of $R$ is  defined by $$R_{\infty}:=\{x\in
R^+:x^{p^n}\in R \textmd{ for some }n\in\mathbb{N}\cup\{0\}\}.$$ The
analogue of Theorem \ref{main} for $R_{\infty}$ is:

\begin{theorem}  \label{perfect}
Let $(R,\fm)$ be a  Noetherian  local domain  of prime
characteristic.
\begin{enumerate}
\item[i)] Any prime ideal of $R_{\infty}$ has a bounded free resolution of countably generated
free $R_{\infty}$-modules.
\item[ii)] If $\dim R<3$, then $\sup\{\pd_{R_{\infty}}(R_{\infty}/\fp):
\fp\in\Spec(R_{\infty})\}<\infty$.
\item[iii)] If $R_{\infty}$ is coherent (this holds if $R$ is regular), then
$\gd(R_{\infty})\leq\dim R+1$.
\item[iv)]   If $R$ is regular and of dimension one, then $\gd(R_{\infty})=2$.
\end{enumerate}
\end{theorem}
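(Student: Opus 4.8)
\emph{Plan.} Throughout, write $R_{\infty}=\bigcup_{n\ge0}R^{1/p^{n}}\subseteq R^{+}$, which is exactly the filtered colimit $\vil(R\xrightarrow{F}R\xrightarrow{F}\cdots)$ of the Frobenius. Two elementary observations organize everything. First, $R\hookrightarrow R_{\infty}$ is an integral, purely inseparable extension, so $\Spec R_{\infty}\to\Spec R$ is a bijection and each prime $\fP$ of $R_{\infty}$ has the form $\fP=\sqrt{\fp R_{\infty}}=\{x\in R_{\infty}:x^{p^{n}}\in\fp\text{ for some }n\}$ for a unique $\fp\in\Spec R$; moreover $R_{\infty}/\fP=(R/\fp)_{\infty}$. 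Second, since $R_{\infty}$ is a countable increasing union of Noetherian rings, every submodule of a countably generated free $R_{\infty}$-module is again countably generated (intersect with the lattices $(R^{1/p^{n}})^{(\omega)}$ and use that each $R^{1/p^{n}}$ is Noetherian). Combining the second point with the fact (Lazard) that a countably generated flat module has projective dimension $\le1$, we obtain the reduction used repeatedly below: if an $R_{\infty}$-module $M$ is the quotient of a countably generated free module by a countably generated submodule and $\fd_{R_{\infty}}M\le d$, then the $d$-th syzygy in a countably generated free resolution of $M$ is countably generated and flat, hence of projective dimension $\le1$, so $M$ admits a resolution by countably generated free $R_{\infty}$-modules of length $\le d+1$; in particular $\pd_{R_{\infty}}M\le d+1$.

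For (i), by the above reduction applied to $M=R_{\infty}/\fP$ (whose syzygies are automatically countably generated) it suffices to show $\fd_{R_{\infty}}(R_{\infty}/\fP)<\infty$ for every prime $\fP$, equivalently $\fd_{R_{\infty}}\big((R/\fp)_{\infty}\big)<\infty$. This is the heart of the matter and the step I expect to be the main obstacle: since $R$ need not be regular, Frobenius is not flat, so one cannot simply use $\Tor^{R_{\infty}}=\vil\Tor^{R^{1/p^{n}}}$ to conclude; one must instead invoke the deep inputs advertised in the introduction — Hochster's bound on the flat dimension of the residue field of $R^{+}$ (and its analogue for $R_{\infty}$), the Hochster--Huneke theorem that $R^{+}$ is a balanced big Cohen--Macaulay algebra over the completion of $R$, and the almost-zero/torsion-theoretic machinery — to bound $\fd_{R_{\infty}}((R/\fp)_{\infty})$ in terms of $\dim R$. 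An extra subtlety to watch here is that $\fp R_{\infty}$ need not be radical, so $(R/\fp)_{\infty}\ne R_{\infty}\otimes_{R}R/\fp$ in general.

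For (ii), when $\dim R\le2$ the primes of $R_{\infty}$ occur in only finitely many shapes: the zero ideal (projective dimension $0$), the maximal ideal $\fm_{R_{\infty}}$, and the primes lying over height-one primes $\fp$ of $R$, for which $R_{\infty}/\fP=(R/\fp)_{\infty}$ is the perfect closure of a one-dimensional Noetherian local domain. In each shape one replaces the non-effective finiteness of (i) by an explicit bound — for $\fm_{R_{\infty}}$ via the flat-dimension bound above, and for the one-dimensional quotients by passing to the normalization of $R/\fp$ (which, being one-dimensional and normal, is regular) and running the argument of (iv) below while tracking the conductor — and then takes the supremum of these finitely many explicit numbers. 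The restriction $\dim R<3$ is precisely what keeps the list of shapes finite and the one-dimensional-quotient case manageable.

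For (iii), if $R$ is regular then Kunz's theorem makes $R\to R^{1/p}$ faithfully flat, so $R_{\infty}$ is a filtered union of Noetherian rings along faithfully flat maps and is therefore coherent, and flat base change gives $\fd_{R_{\infty}}(R_{\infty}\otimes_{R^{1/p^{n}}}M_{n})\le\fd_{R^{1/p^{n}}}(M_{n})\le\dim R$ for every finitely presented module $M_n$, whence $\wdim R_{\infty}\le\dim R$; the general coherent case is handled along the same lines, using coherence to reduce to finitely presented modules. Since every ideal of $R_{\infty}$ is countably generated, an Osofsky-type bound for the global dimension of coherent rings then yields $\gd R_{\infty}\le\wdim R_{\infty}+1\le\dim R+1$. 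For (iv), a one-dimensional regular $R$ is a discrete valuation ring, with uniformizer $t$ say, so $R_{\infty}=\bigcup_{n}R^{1/p^{n}}$ is a rank-one valuation domain which is local, perfect and non-Noetherian, with maximal ideal $\fm_{R_{\infty}}=\bigcup_{n}(t^{1/p^{n}})$ countably but not finitely generated. By (iii), $\gd R_{\infty}\le2$. Conversely $\fm_{R_{\infty}}$ is flat (every ideal of a valuation domain is flat) but not projective — a projective module over a local ring is free, and a free ideal of a domain is principal, contradicting non-finite generation — so $\pd_{R_{\infty}}\fm_{R_{\infty}}=1$, hence $\pd_{R_{\infty}}(R_{\infty}/\fm_{R_{\infty}})=2$ and $\gd R_{\infty}=2$.
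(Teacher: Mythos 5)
Your proposal flags its own central gap, and the flag is accurate: you do not have a proof of part (i), and the route you sketch for it is not the paper's and would not obviously close. The paper does not bound $\fd_{R_\infty}(R_\infty/\fP)$ abstractly and then apply a Lazard/countable-syzygy argument; instead it first proves a structural fact (Lemma \ref{aux}): every nonzero prime $\fP$ of $R_\infty$ equals $\sum_{i=1}^{\ell}(x_i^{\infty})R_\infty$ for a finite set $x_1,\dots,x_\ell$, where $\ell$ is the number of generators of $\fP\cap R$. Your identity $\fP=\sqrt{\fp R_\infty}$ is a weaker statement; showing that this radical is the finite sum of $p^{\infty}$-root ideals requires the analysis in Lemma \ref{alef0} (each $R_n:=\{x\in R_\infty:x^{p^n}\in R\}$ is Noetherian with $\fP\cap R_n=(x_1^{1/p^n},\dots,x_\ell^{1/p^n})R_n$) together with an incomparability argument. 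Once the prime is in that form, the paper obtains the countably generated free resolution of length $\le 2\ell$ not by flat-dimension bookkeeping but by tensoring the explicit length-two resolutions of each $R_\infty/(x_i^\infty)$ built in Lemma \ref{key}, using that each $(x_i^\infty)$ is flat and that the relevant $\Tor_1$ vanishes (Lemma \ref{gen}/Lemma \ref{gen1}). Without Lemma \ref{aux} your reduction has nothing to grip.

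Part (ii) has the same missing ingredient plus one more: the uniform bound in dimension $\le 2$ comes from Sally--Vasconcelos (Lemma \ref{SV}), which bounds the number of generators of primes of a $2$-dimensional Noetherian local ring; this bounded $\ell$ is then fed into Lemma \ref{gen1}. Your plan of stratifying primes into ``shapes'' and passing to the normalization of $R/\fp$ with conductor-tracking is a genuinely different idea, and it is not clear it works: $(R/\fp)_\infty$ is not the perfect closure of the normalization, and nothing you say controls projective dimension over $R_\infty$ (as opposed to over $(R/\fp)_\infty$) uniformly in $\fp$. For part (iii) your argument is fine for $R$ regular (Kunz plus the Glaz criterion plus the $\aleph_0$-Osofsky bound, essentially the paper's Lemma \ref{dir}), but ``the general coherent case is handled along the same lines'' is not: there flat base change along Frobenius is unavailable, and the input the paper actually uses is the Hochster-type bound $\fd_{R_\infty}(R_\infty/\fm_{R_\infty})\le\dim R$ (Lemma \ref{wel} vii) combined with the Glaz criterion for finitely presented modules and the Osofsky $\aleph_0$ inequality. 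Part (iv) is correct and is essentially the paper's argument.
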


Throughout this paper all rings are commutative, with identity, and
all modules are unital. The organization of this paper is as
follows:

In Section 2, we summarize some results  concerning projective and
flat dimensions of modules over non-Noetherian rings. Also, for the
convenience of reader, we collect some known results about $R^+$ and
$R_{\infty}$ which will be used throughout this work. The
\textit{Local Global Principle Theorem} \cite[Theorem 2.2.7]{BH}
indicates that a Noetherian local ring has finite global dimension
if and only if its residue field has finite projective dimension.
The main result of Section 3 gives an analogue of this theorem for a
certain class of non-Noetherian rings. Sections 4 and 5 are devoted
to the proof of Theorem \ref{main}. In Section 6 by applying Theorem
\ref{main}, we examine the cohomological dimension and the
cohomological depth of almost zero functors that are naturally
associated to the different classes of almost zero modules. We close
Section 6 by an application of almost zero modules, see Proposition
6.8 below. Section 7 is concerned with the proof of Theorem
\ref{perfect}. In Section 8, as another application of Theorem
\ref{main}, we show that some known results for Noetherian rings do
not hold  for general commutative rings. We include some questions
in a final section.

\section{Preliminaries}
In this section we set notation and discuss some facts which will be
used throughout the paper. In the first subsection we collect some
results concerning projective and flat dimensions of modules over
non-Noetherian rings. Also, we recall some known results concerning
the perfect and absolute integral closures of Noetherian domains. We
do this task in subsection 2.B.

\textbf{2.A. Homological dimensions of modules over certain
non-Noetherian rings.} By $\aleph_{-1}$ we denote the cardinality of
finite sets. By $\aleph_0$ we mean the cardinality of the set of all
natural numbers. Consider the set $\Omega:=\{\alpha:\alpha  \textmd{
is a countable ordinal }\footnote{Recall that a set $X$ is an
ordinal if X is totally ordered with respect to inclusion and every
element of $X$ is also a subset of $X$. Also, one can see that
$\Omega$ is itself an ordinal number larger than all countable ones,
so it is an uncountable set.} \textmd{number} \}$.   By definition
$\aleph_1$, is the cardinality of $\Omega$. Inductively, $\aleph_n$
can be defined for all $n\in\mathbb{N}$. For more details on this
notion we recommend the reader to \cite[Appendix]{O1}. A ring is
called $\aleph_n$-Noetherian if each of its ideals  can be generated
by a set of cardinality bounded by $\aleph_n$. So, Noetherian rings
are exactly $\aleph_{-1}$-Noetherian rings. In the next result we
give some basic properties of $\aleph_n$-Noetherian rings.

\begin{lemma}\label{co} The following assertions hold.
\begin{enumerate}
\item[i)] Let $A$ be a  $\aleph_{n}$-Noetherian ring and let $\textit{S}$
be a multiplicative closed subset of $A$. Then   $\textit{S}^{-1}A$
is $\aleph_{n}$-Noetherian.
\item[ii)] Let $A$ be a  $\aleph_{n}$-Noetherian ring. Then any homomorphic
image of $A$  is $\aleph_{n}$-Noetherian.
\item[iii)] Let $\{A_n:n\in \mathbb{N}\}$ be a chain of commutative
Noetherian rings and let $A:=\bigcup A_n$. Then $A$ is
$\aleph_{0}$-Noetherian. In particular,  if $A$  is Noetherian, then
the ring
$A[X_1,X_2,\cdots]:=\bigcup_{n=1}^{\infty}A[X_1,\cdots,X_n]$ is
$\aleph_{0}$-Noetherian.
\end{enumerate}
\end{lemma}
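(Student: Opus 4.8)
The plan is to prove the three parts essentially independently, since they concern different constructions. For part (i), suppose $A$ is $\aleph_n$-Noetherian and let $S$ be a multiplicative closed subset. Every ideal of $S^{-1}A$ is of the form $S^{-1}\fa$ for some ideal $\fa$ of $A$; choosing a generating set $\{a_i : i \in I\}$ of $\fa$ with $|I| \le \aleph_n$, the images $\{a_i/1\}$ generate $S^{-1}\fa$, so $S^{-1}A$ is $\aleph_n$-Noetherian. For part (ii), let $A \to A/\fb$ be a surjection; any ideal of $A/\fb$ is $\fa/\fb$ for an ideal $\fa \supseteq \fb$ of $A$, and a generating set of $\fa$ of cardinality $\le \aleph_n$ maps onto a generating set of $\fa/\fb$. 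Both arguments are routine cardinality bookkeeping; the only point worth noting is that the bound $\aleph_n$ is preserved (not increased) by passing to subsets or images of generating sets, which is immediate.

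For part (iii), let $\{A_n : n \in \mathbb{N}\}$ be an ascending chain of Noetherian rings and $A = \bigcup_n A_n$. First I would establish that every ideal $\fa$ of $A$ satisfies $\fa = \bigcup_n (\fa \cap A_n)$, which follows since each element of $\fa$ lies in some $A_n$. Next, for each $n$ the ideal $\fa \cap A_n$ of $A_n$ is finitely generated (as $A_n$ is Noetherian), say by a finite set $G_n$. Then $G := \bigcup_n G_n$ generates $\fa$ as an $A$-ideal: any $x \in \fa$ lies in some $\fa \cap A_n$, hence is an $A_n$-linear — in particular an $A$-linear — combination of elements of $G_n \subseteq G$. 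Since $G$ is a countable union of finite sets, $|G| \le \aleph_0$, so $A$ is $\aleph_0$-Noetherian.

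Finally, for the "in particular" statement, apply the previous paragraph to the chain $A_n := A[X_1,\dots,X_n]$ (with $A$ Noetherian, so each $A_n$ is Noetherian by the Hilbert basis theorem), whose union is $A[X_1,X_2,\dots]$. The chain is ascending under the natural inclusions $A[X_1,\dots,X_n] \hookrightarrow A[X_1,\dots,X_{n+1}]$, so the hypothesis of part (iii) is met and $A[X_1,X_2,\dots]$ is $\aleph_0$-Noetherian. I do not anticipate any genuine obstacle here; the only mild subtlety is being careful that "generated by a set of cardinality bounded by $\aleph_n$" is exactly the definition being used, so that finite generating sets in each $A_n$ assemble to a countable generating set over $A$ — this is the crux of (iii) and it works because a countable union of finite sets is countable.
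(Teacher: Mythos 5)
The paper gives no proof here; it states ``The proof is straightforward and we leave it to the reader.'' Your proposal correctly supplies the omitted details, and all three parts (localization, quotient, ascending union of Noetherian rings via $\fa = \bigcup_n (\fa\cap A_n)$ and a countable union of finite generating sets) are exactly the intended routine arguments.
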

\begin{proof}
The proof is straightforward and we leave it to the reader.
\end{proof}

In what follows we will use the following remarkable results of
Osofsky several times. It is worth to recall that if $n=-1$, Lemma
\ref{j} iii) below, is just a rephrasing of the fact that over a
Noetherian ring  projective and flat dimensions  of a cycle module
coincide.

\begin{lemma}\label{j} The following assertions hold.
\begin{enumerate}
\item[i)] (\cite[Page 14]{O2})
Let $V$ be a valuation domain and let $\frak r$ be an ideal of $V$.
Then $\pd(\frak r)=n +1$ if and only if  $\frak r$ is generated by
$\aleph_n$ but no fewer elements.
\item[ii)] (\cite[Proposition 2.62]{O1}) Let $n$ be any nonnegative integer or $\infty$.
Then there exists a  valuation domain $V$ with global dimension n.
\item[iii)] (See the proof of \cite[Corollary 2.47]{O1}) Let  $\fa$ be an ideal of a $\aleph_n$-Noetherian ring
$A$. Then $\pd_{A}(A/\fa)\leq\fd_{A}(A/\fa)+n+1$.
\end{enumerate}
\end{lemma}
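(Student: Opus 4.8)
Assertions (i) and (ii) are quoted verbatim from Osofsky, so for them I would simply refer to the cited places in \cite{O2} and \cite{O1}; the only item requiring an argument here is (iii), which I would extract from the ideas in the proof of \cite[Corollary 2.47]{O1}.

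The engine is the following fact, valid over an arbitrary ring: \emph{a flat module generated by at most $\aleph_n$ elements has projective dimension at most $n+1$}. Granting it, put $d:=\fd_A(A/\fa)$; we may assume $d<\infty$, as otherwise the inequality is vacuous. Since $A$ is $\aleph_n$-Noetherian, $\fa$ is generated by at most $\aleph_n$ elements, and a routine cardinality argument (write a free module of rank $\le\aleph_n$ as an increasing union of free modules of smaller rank and use $\aleph_n\cdot\aleph_n=\aleph_n$) shows that over an $\aleph_n$-Noetherian ring every submodule of an $\aleph_n$-generated module is again $\aleph_n$-generated. Hence one can build a free resolution
$$\cdots\longrightarrow F_2\longrightarrow F_1\longrightarrow F_0\longrightarrow A/\fa\longrightarrow 0$$
with $F_0=A$ and, for $i\ge 1$, $F_i$ free of rank at most $\aleph_n$: cover $\fa$ by such an $F_1$, and at each later stage cover the kernel — a submodule of a free module of rank $\le\aleph_n$, hence $\aleph_n$-generated — by a free module of rank $\le\aleph_n$. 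Let $K:=\im(F_d\to F_{d-1})$ be the $d$-th syzygy (read $K=A/\fa$ when $d=0$). Then $K$ is $\aleph_n$-generated, being a homomorphic image of $F_d$, while $K$ is flat because $\fd_A(A/\fa)\le d$; so $\pd_A(K)\le n+1$ by the fact above, and dimension shifting along $0\to K\to F_{d-1}\to\cdots\to F_0\to A/\fa\to 0$ gives
$$\pd_A(A/\fa)\le d+\pd_A(K)\le d+n+1=\fd_A(A/\fa)+n+1.$$
For $n=-1$ this is the classical coincidence of projective and flat dimension for a finitely generated module over a Noetherian ring, recovering the remark preceding the lemma.

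The real work lies in the flat-module fact, and that is the step I expect to be the obstacle. I would prove it by transfinite induction on $n$. The base case $n=0$ is classical: a countably generated flat module $F$ is a direct limit of a countable chain $F_1\to F_2\to\cdots$ of finitely generated free modules, and the associated telescope short exact sequence $0\to\bigoplus_i F_i\to\bigoplus_i F_i\to F\to 0$ realizes $F$ as the cokernel of a monomorphism between free modules, so $\pd_A(F)\le 1$. For the inductive step one writes an $\aleph_{n+1}$-generated flat module as the union of a continuous well-ordered chain of pure submodules generated by at most $\aleph_n$ elements, so that the successive subquotients are flat and $\aleph_n$-generated, hence of projective dimension at most $n+1$ by the inductive hypothesis; one then concludes by a transfinite computation of $\Ext$ along the filtration, whose only non-formal ingredient is the vanishing above a controlled degree of the higher derived functors of $\varprojlim$ indexed by sets of cardinality tied to $\aleph_n$ (cf.\ Jensen). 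This bookkeeping is the technical heart of the matter and is exactly what is carried out in the proof of \cite[Corollary 2.47]{O1}, so in a write-up I would invoke it there rather than reproduce it.
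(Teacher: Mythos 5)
The paper offers no proof of this lemma: all three parts are stated with citations to Osofsky, and for (iii) in particular the text merely says ``see the proof of \cite[Corollary 2.47]{O1}.'' So there is no in-paper argument to compare against; what you have done for (iii) is attempt to reconstruct Osofsky's argument from the hint, and your reconstruction has the right shape. Using the $\aleph_n$-Noetherian hypothesis (via the submodule lemma you prove by the $\aleph_n\cdot\aleph_n=\aleph_n$ count) to build a free resolution of $A/\fa$ whose terms past $F_0$ have rank $\le\aleph_n$, identifying the $d$-th syzygy $K$ as flat when $d=\fd_A(A/\fa)$, and then invoking a Jensen--Osofsky bound on the projective dimension of a ``small'' flat module is, to my knowledge, exactly the route Osofsky takes.

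One point is stated a little loosely and deserves tightening. You phrase the engine as ``a flat module generated by at most $\aleph_n$ elements has $\pd\le n+1$, over an arbitrary ring,'' and your $n=0$ base case (Lazard plus telescope) quietly uses more than countable generation: picking a countable chain inside a filtered Lazard system of a merely countably \emph{generated} flat module need not give a cofinal subsystem, so the naive argument really requires countably \emph{presented} (equivalently, a Lazard presentation indexed by a countable set). The same caveat propagates through the transfinite induction. This does not break your proof of (iii), because the $\aleph_n$-Noetherian hypothesis gives you more than you use: the syzygy $K$ does not merely have $\le\aleph_n$ generators, it sits in a free resolution all of whose terms have rank $\le\aleph_n$, hence it is $\aleph_n$-presented, and the safe form of the flat-module bound applies. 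I would restate the engine in that presented (or resolution-of-bounded-rank) form, which is also the form needed to match the $\varprojlim$-vanishing input you allude to. With that adjustment the argument is sound, and you are right to flag the transfinite step of the engine as the genuinely technical part and to defer there to Osofsky.
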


Recall that a ring is coherent if each of its finitely generated
ideals are finitely presented. A typical example is a valuation
domain. In the sequel  we will need the following result.

\begin{lemma}\label{g} The following assertions hold.
\begin{enumerate}
\item[i)] (\cite[Theorem 1.3.9]{Gl}) Let $A$ be a ring. Then

\[\begin{array}{ll}
\wdim A&:=\sup\{\fd(M):M \emph{ is an $A$-module }\}\\&=\sup\{\fd(A/
\fa):\fa \emph{ is a finitely generated ideal of }A\}.
\\
\end{array}\]

\item[ii)] (\cite[Corollary 2.5.10]{Gl}) Let $A$ be a coherent ring and let $M$ be a
finitely presented $A$-module. Then $\pd_A(M)\leq n$ if and only if
$\Tor_{n+1}^A(M,A/\fm)=0$ for all maximal ideals $\fm$ of $A$.

\item[iii)] (Auslander's global dimension Theorem; \cite[Theorem 2.17]{O1})
Let $A$ be a ring. Then $$\gd A=\sup\{\pd_{A}(A/
\fa):\fa\trianglelefteq A\}.$$
\end{enumerate}
\end{lemma}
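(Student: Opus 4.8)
Since each of the three items carries an explicit reference, the most honest route is simply to quote the cited sources; the plan below just records the arguments, whose common thread is that a (co)homological dimension can be detected on a small class of test modules.

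For (i), the first equality is the definition of $\wdim A$, so only the second is at issue. I would begin from the ideal-theoretic criterion for flatness: an $A$-module $N$ is flat precisely when $\Tor_1^A(A/\fa,N)=0$ for every finitely generated ideal $\fa$. Dimension shifting along a free resolution then upgrades this to the statement that $\fd_A N\le n$ iff $\Tor_{n+1}^A(A/\fa,N)=0$ for all finitely generated $\fa$, since the $n$-th syzygy $\Omega^nN$ becomes flat as soon as $\Tor_1^A(A/\fa,\Omega^nN)=\Tor_{n+1}^A(A/\fa,N)$ vanishes for every such $\fa$. Combining this with the symmetry of $\Tor$ gives $\wdim A\le n \iff \Tor_{n+1}^A(M,N)=0$ for all $M,N \iff \Tor_{n+1}^A(A/\fa,N)=0$ for all finitely generated $\fa$ and all $N \iff \fd_A(A/\fa)\le n$ for all finitely generated $\fa$, which is the claim.

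For (ii), coherence of $A$ makes the finitely presented $A$-modules an abelian subcategory closed under kernels, so $M$ admits a resolution by finitely generated free modules whose syzygies $\Omega^iM$ remain finitely presented, and $\pd_AM\le n$ iff $\Omega^nM$ is projective. The non-formal inputs are that a finitely presented flat module is projective, and that a finitely presented module $P$ over a local ring $(A_\fm,\fm A_\fm)$ is free iff $\fm A_\fm\otimes P_\fm\to P_\fm$ is injective, i.e. iff $\Tor_1^{A_\fm}(P_\fm,A_\fm/\fm A_\fm)=0$ (proved via Nakayama for finitely generated modules, together with the finiteness of the kernel of a surjection from a finitely generated free module onto a finitely presented one). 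Since projectivity of a finitely presented module is equivalent to local freeness at all maximal ideals, applying this to $P=\Omega^nM$ shows $\pd_AM\le n$ iff $\Tor_1^A(\Omega^nM,A/\fm)_\fm=0$ for every maximal $\fm$; and because $\Tor_{n+1}^A(M,A/\fm)=\Tor_1^A(\Omega^nM,A/\fm)$ is already an $A/\fm$-module, localizing at $\fm$ changes nothing, so the criterion becomes $\Tor_{n+1}^A(M,A/\fm)=0$ for all maximal $\fm$.

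For (iii), the inequality $\gd A\ge\sup\{\pd_A(A/\fa):\fa\trianglelefteq A\}$ is trivial. For the reverse, put $n:=\sup_{\fa}\pd_A(A/\fa)$ (finite, else there is nothing to prove) and prove $\pd_AM\le n$ for every $M$ by transfinite induction on the cardinality of a generating set. A finitely generated module is a finite iterated extension of cyclic modules $A/\fa$, and $\pd$ is subadditive on short exact sequences, which handles the finite case; for the inductive step one well-orders a generating set, lets $M_\alpha$ be the submodule generated by the $\alpha$-th initial segment so that $M=\bigcup_\alpha M_\alpha$ is a continuous well-ordered union with each $M_{\alpha+1}/M_\alpha$ cyclic, and then invokes Auslander's lemma: a continuous well-ordered union $M=\bigcup_\alpha M_\alpha$ with $\pd_AM_0\le n$ and $\pd_A(M_{\alpha+1}/M_\alpha)\le n$ for all $\alpha$ satisfies $\pd_AM\le n$. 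I expect the transfinite bookkeeping here (and the proof of Auslander's lemma itself, which runs through the long exact $\Ext$-sequence and a colimit argument) to be the only genuine obstacle; items (i) and (ii) are essentially formal once the relevant flatness criteria are in place.
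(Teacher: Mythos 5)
The paper supplies no proof for this lemma; it is stated as a catalogue of three background facts with explicit references to Glaz and Osofsky, so there is nothing internal to compare against. Your sketches correctly reproduce the standard arguments from the cited sources: (i) via the $\Tor_1(A/\fa,-)$ criterion for flatness, dimension shifting, and $\Tor$-symmetry; (ii) via coherence giving finitely presented syzygies, the fact that a finitely presented flat module is projective, and the local freeness criterion $\Tor_1(P_\fm,k(\fm))=0$; and (iii) via the trivial inequality together with Auslander's well-ordered-filtration lemma (which the paper itself records as Lemma \ref{Aus} and uses elsewhere). Two small phrasings you may want to tighten if you write this out: in (i), the dimension shift should run along a \emph{flat} resolution (or note that any free resolution works because an $n$-th syzygy in a free resolution is flat once $\Tor_{n+1}$ vanishes against all $A/\fa$); and in (iii), ``subadditive'' should read that the projective dimension of the middle term of a short exact sequence is bounded by the maximum of the outer two, which is what Auslander's lemma iterates transfinitely.
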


\textbf{2.B. The perfect and absolute integral closure of Noetherian
domains.} Recall that the absolute integral closure $R^+$ is defined
as the integral closure of a domain $R$ inside an algebraic closure
of the field of fractions of $R$. Throughout this paper $p$ is a
prime number. In this subsection we summarize the basic  results
concerning $R^+$ and the perfect closure of $R$ (when $\Char R=p$):
$$R_{\infty}:=\{x\in
R^+:x^{p^n}\in R \textmd{ for some
}n\in\mathbb{N}\cup\{0\}\}.$$Assume that $\Char R=p$ and let $A$ be
either $R^+$ or $R_{\infty}$ and let $x\in A$. By $(x^{\infty})A$ we
mean that $(x^{1/p^n}:n\in \mathbb{N}\cup\{0\})A$. Now, assume that
$\Char R=0$ and  let $x\in R^+$. By $(x^{\infty})R^+$ we mean
$(x^{1/n}:n\in \mathbb{N})R^+$. Note that a local domain $(R,\fm,
k)$ has mixed characteristic $p$, if $\Char R=0$ and $\Char k=p$. We
say that $R$ has residue prime characteristic if $\Char k=p$. We now
list some properties of $R^+$ and $R_{\infty}$.

\begin{lemma}\label{wel}
Let $(R,\fm)$ be a  Noetherian local domain.
\begin{enumerate}
\item[i)]
There is a $\mathbb{Q}$-valued valuation map on $R^+$  which is
nonnegative on $R^+$ and positive on $\fm R^+$.
\item[ii)] Let $\epsilon$ be a real number and let
$\fa_{\epsilon}:=\{x\in R^{+}| v(x)> \epsilon\}$. Then
$\fa_{\epsilon}$ is an ideal of $R^+$.
\item[iii)]  Assume that $R$ has prime characteristic
$p$. Let $A$ be either $R^+$ or $R_{\infty}$ and let
$x_1,\cdots,x_{\ell}$ be a finite sequence of elements of $A$. Then
$\sum_{i=1}^{\ell}(x_i^{\infty})A$ is a radical ideal of $A$.
\item[iv)] Assume that $R$ has mixed characteristic  $p$.
Let $p,x_2,\cdots,x_{\ell}$ be a finite sequence of elements of
$R^+$. Then $(p^{\infty})R^++\sum_{i=2}^{\ell}(x_i^{\infty})R^+$ is
a radical ideal of $R^+$.
\item[v)] If $(R,\fm)$ is complete, then $R^{+}$
is a directed union of module-finite extensions of $R$ which are
complete, local and normal.
\item[vi)] Let  $\textit{S}$ be  any
multiplicative closed subset of $R$. Then $\textit{S}^{-1}R^+\cong
(\textit{S}^{-1}R)^+$.
\item[vii)] Assume that $R$ has residue prime characteristic.
Let $A$ be either $R^+$ or $R_{\infty}$ and let
$x_1,\cdots,x_{\ell}$ be a finite sequence of elements of $A$. Then
$\fd_A(A/\sum_{i=1}^{\ell}(x_i^{\infty})A)\leq \ell$.
\end{enumerate}
\end{lemma}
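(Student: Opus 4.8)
We must prove that $\fd_A\bigl(A/\sum_{i=1}^{\ell}(x_i^\infty)A\bigr)\le\ell$. The plan is an induction on $\ell$ resting on three ingredients: (a) each ideal $(x_i^\infty)A$ is a flat $A$-module; (b) the obstruction $\Tor_1$ to splitting off one factor is a quotient of radical ideals, which vanishes by Lemma \ref{wel}(iii); and (c) the formal fact that $\fd_A(X\otimes_A Y)\le\fd_A(X)+\fd_A(Y)$ whenever $\Tor^A_j(X,Y)=0$ for all $j\ge1$ — proved by tensoring bounded flat resolutions of $X$ and $Y$ and observing that the total complex is then a bounded flat resolution of $X\otimes_A Y$, its homology being $\Tor^A_\bullet(X,Y)$, which is concentrated in degree $0$.

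\emph{Flatness and the base case.} Since $A$ is a domain, $(x^{1/p^n})A$ is free of rank one for $x\ne0$, and $x^{1/p^n}=(x^{1/p^{n+1}})^p$ gives $(x^\infty)A=\bigcup_{n\ge0}(x^{1/p^n})A$, an increasing union of free modules, hence flat; if $x=0$ then $(x^\infty)A=0$. (In mixed characteristic the same applies to $(x^\infty)R^+=\bigcup_n(x^{1/n})R^+$.) From $0\to(x_i^\infty)A\to A\to A/(x_i^\infty)A\to0$ we get $\fd_A(A/(x_i^\infty)A)\le1$, which is the case $\ell=1$; moreover $\Tor^A_j(A/(x_i^\infty)A,-)=0$ for all $j\ge2$.

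\emph{Inductive step.} Put $\fb:=\sum_{i=2}^{\ell}(x_i^\infty)A$, so $A/\sum_{i=1}^{\ell}(x_i^\infty)A\cong A/(x_1^\infty)A\otimes_A A/\fb$ and $\fd_A(A/\fb)\le\ell-1$ by induction. By (c) it remains to show $\Tor^A_1\bigl(A/(x_1^\infty)A,\,A/\fb\bigr)=0$. This group is isomorphic to $\bigl((x_1^\infty)A\cap\fb\bigr)/\bigl((x_1^\infty)A\cdot\fb\bigr)$. A direct computation (using that all fractional $p$-power roots of elements of $A$ lie in $A$) gives $(x_1^\infty)A\cdot(x_i^\infty)A=\bigl((x_1x_i)^\infty\bigr)A$, hence $(x_1^\infty)A\cdot\fb=\sum_{i=2}^{\ell}\bigl((x_1x_i)^\infty\bigr)A$, which by Lemma \ref{wel}(iii) applied to the sequence $x_1x_2,\dots,x_1x_\ell$ is a radical ideal. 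On the other hand $(x_1^\infty)A\cap\fb$ is the intersection of the radical ideals $(x_1^\infty)A$ and $\fb$ (Lemma \ref{wel}(iii) once more), hence radical, and its square lies in $(x_1^\infty)A\cdot\fb$; therefore $(x_1^\infty)A\cap\fb\subseteq(x_1^\infty)A\cdot\fb$, the reverse inclusion being trivial. Thus the $\Tor$ vanishes, and (c) gives $\fd_A\bigl(A/\sum_{i=1}^{\ell}(x_i^\infty)A\bigr)\le1+(\ell-1)=\ell$.

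\emph{Mixed characteristic and the main obstacle.} For $A=R^+$ with $\Char R=0$ and residue characteristic $p$, one runs the same induction using Lemma \ref{wel}(iv) in place of (iii), together with the fact that $(y^\infty)R^+$ is radical for any single $y\in R^+$ (because $R^+$ is integrally closed in its fraction field); since the mixed-characteristic radicality statements involve $p$, the induction must be arranged so that $p$ sits among the generators in play, which is precisely the case needed in the applications. I expect the only real difficulty to be the identity $(x_1^\infty)A\cap\fb=(x_1^\infty)A\cdot\fb$: the homological bookkeeping is routine, but this equality is exactly where the arithmetic of $R^+$ and $R_\infty$ — the radicality of ideals generated by $p$-power roots — genuinely enters, through Lemma \ref{wel}(iii)/(iv).
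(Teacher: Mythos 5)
Your proposal addresses only part vii); the other six parts (which in the paper are handled by citations or left as easy checks) are not touched, so it is not a proof of the full lemma. For vii) itself, the paper simply cites Hochster's \cite[Proposition 2.15]{H1}, whereas you give a self-contained induction. Your route is genuinely different and, in prime characteristic, it is correct and quite clean: the ingredients are (a) flatness of $(x^\infty)A$ as a directed union of free modules, giving $\fd_A(A/(x^\infty)A)\leq 1$ and $\Tor^A_j(A/(x^\infty)A,-)=0$ for $j\geq 2$; (b) the identification $\Tor^A_1(A/(x_1^\infty)A,A/\fb)\cong\bigl((x_1^\infty)A\cap\fb\bigr)/\bigl((x_1^\infty)A\cdot\fb\bigr)$; and (c) the radicality argument that forces the intersection to equal the product. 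Step (c) is the crux, and your observation that $z\in(x_1^\infty)A\cap\fb$ implies $z^2\in(x_1^\infty)A\cdot\fb$, combined with the product formula $(x_1^\infty)A\cdot(x_i^\infty)A=\bigl((x_1x_i)^\infty\bigr)A$ and radicality of $\sum_i\bigl((x_1x_i)^\infty\bigr)A$ from part iii), is exactly the kind of self-contained derivation the paper defers to Hochster (the paper also invokes \cite[Proposition 2.11(3)]{H1} for precisely this $\Tor_1$-vanishing in the proof of Lemma \ref{gen}). What your argument buys is independence from the external reference; what it costs is that you must prove the product formula and manage the radicality bookkeeping.

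There is, however, a real and acknowledged gap in the mixed characteristic case. Part vii) is stated for arbitrary sequences $x_1,\dots,x_\ell$ in $R^+$ when $R$ merely has residue characteristic $p$, but your reduction needs $\sum_{i=2}^{\ell}\bigl((x_1x_i)^\infty\bigr)R^+$ to be radical, and part iv) only guarantees radicality when $p$ itself (not $p$ times something, and not an arbitrary $x_1$) appears among the generators. Your final paragraph concedes this and gestures at rearranging the induction so that $p$ is in play, but as written the induction applied to the product ideal produces generators of the form $x_1x_i$ rather than $p$ itself, so part iv) does not apply to it. You would need either a stronger radicality statement in mixed characteristic, or a different reduction that keeps a pure $(p^\infty)$ factor throughout; the remark that $(y^\infty)R^+$ is radical for a single $y$ (which is correct, via integral closedness of $R^+$) does not by itself fix this. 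So in mixed characteristic your proposal does not establish the statement in the generality asserted, and in that regime the paper's citation carries more than your argument does.
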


\begin{proof} i) This is in \cite[Page 28]{H2}. Note that in that
argument $R$ does not need to be complete.

ii) This is easy to check and we leave it to the reader.

For iii) and iv), see parts (1) and (2) of \cite[Propostion
2.11]{H1}.  Note that iv)  may be checked modulo $(p^{\infty})R^+$,
to translates the prime characteristic case.

v) By the proof of \cite[Lemma 4.8.1]{HS}, $R^{+}$ is a directed
union of module-finite extensions of $R$ which are complete and
local. Let $\overline{R'}$ be the integral closure of $R'$ in its
field of fractions. Recall from \cite[Theorem 4.3.4]{HS} that the
integral closure of a complete local domain in its field of
fractions is Noetherian and local. Thus $\overline{R'}$ is a
Noetherian complete local  normal domain. To conclude, it  remains
to recall that $\overline{R'}\subseteq R^+$.

vi) See the second paragraph of \cite[Section 2]{AH}.

vii) See \cite[Proposition 2.15]{H1}.
\end{proof}

We close this section by the following  corollary of Lemma
\ref{wel}.
\begin{corollary} \label{local}
Let $(R,\fm)$ be a  Noetherian local Henselian domain of dimension $d$.
\begin{enumerate}
\item[i)] Assume that $R$ has prime characteristic
$p$. Let $A$ be either $R^+$ or $R_{\infty}$ and let
$x_1,\cdots,x_d$ be a  system of parameters for $R$. Then
$\sum_{i=1}^{d}(x_i^{\infty})A$ is a maximal ideal of $A$.
\item[ii)] Assume that $R$ has mixed characteristic  $p$.
Let $p,x_2,\cdots,x_d$ be a  system of parameters for $R$. Then
$(p^{\infty})R^++\sum_{i=2}^{d}(x_i^{\infty})R^+$ is a maximal ideal
of $R^+$.
\end{enumerate}
\end{corollary}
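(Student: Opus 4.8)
The plan is to deduce this from Lemma \ref{wel}(iii)--(iv) together with the classical theory of systems of parameters. First I would note that by Lemma \ref{wel}(iii) the ideal $I:=\sum_{i=1}^{d}(x_i^{\infty})A$ is radical (in the prime characteristic case; the mixed characteristic case is identical using Lemma \ref{wel}(iv)). So it suffices to show that $I$ is maximal, i.e. that $A/I$ is a field, equivalently that $\V(I)$ consists of a single maximal ideal. The key observation is that $\fm A\subseteq\rad(I)=I$: indeed each $x_i\in(x_i^{\infty})A\subseteq I$, and since $x_1,\dots,x_d$ is a system of parameters, $\fm=\rad((x_1,\dots,x_d)R)$, so $\fm\subseteq\rad((x_1,\dots,x_d)A)\subseteq\rad(I)=I$. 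Hence $\fm A\subseteq I$, and therefore every prime of $A$ containing $I$ lies over $\fm$ and in particular is maximal (because $A$ is integral over $R$, so primes of $A$ lying over the maximal ideal $\fm$ are exactly the maximal ideals of $A$, by lying-over and incomparability).

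Next I would pin down that there is \emph{exactly} one such prime. Since $R$ is Henselian—wait, $R$ need not be Henselian here; instead I would use that $A=R^+$ (or $R_\infty$, which sits inside $R^+$) has the property, established by Artin \cite{Ar}, that the sum of two prime ideals of $R^+$ is again prime. Alternatively, and more cleanly, I would argue directly: $A/I$ is reduced (as $I$ is radical), integral over $R/(\fm\cap R)$-ish, and every element of $A/I$ is either zero or a unit. To see the latter, take $y\in A$ with $y\notin I$; then $v(y)=0$ for the valuation of Lemma \ref{wel}(i) (since $\fm A\subseteq I$ forces everything of positive value into $I$—more precisely, by Lemma \ref{wel}(ii) the ideal $\fa_0=\{x:v(x)>0\}$ contains $\fm A$ and... ), so $y$ is a unit in the corresponding valuation overring; but this needs to be transferred back to $A/I$. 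The slicker route: $A/I$ is an integral extension of the field $R/\fm$ after we observe $I\cap R\supseteq\fm$ (hence $=\fm$), so $A/I$ is integral over the field $k=R/\fm$ and reduced, hence a (possibly infinite) product—no, it is an integral domain quotient issue. Actually the cleanest finish: $A/I$ is reduced and integral over the field $k$, and has no nontrivial idempotents because $\Spec(A)$ is irreducible ($A$ is a domain), so $\Spec(A/I)=\V(I)$ is connected; a connected reduced ring that is integral over a field and has all primes maximal (zero-dimensional) is a field. That gives maximality of $I$.

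For the mixed characteristic statement (ii), I would reduce to (i) exactly as in the proof of Lemma \ref{wel}(iv): work modulo $(p^{\infty})R^+$. By Lemma \ref{wel}(iii) applied with $x_1=p$ we know $(p^\infty)R^+$ is already a radical ideal, and $R^+/(p^\infty)R^+$ is an integral extension of $R/pR$ which is a complete local ring of characteristic $p$ and dimension $d-1$ with system of parameters the images of $x_2,\dots,x_d$. One must check that $R^+/(p^\infty)R^+$ is, up to nilpotents or up to passing to a domain quotient, of the form $S^+$ or receives $S^+$ for $S$ a characteristic-$p$ Noetherian local domain of dimension $d-1$; this is where a small amount of care is needed, but it is exactly the translation already invoked in Lemma \ref{wel}(iv), so I would cite that mechanism. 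Then part (i) gives that $(p^\infty)R^++\sum_{i=2}^d(x_i^\infty)R^+$ corresponds to a maximal ideal of the quotient, hence is maximal in $R^+$.

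The main obstacle I anticipate is the step showing uniqueness of the prime over $\fm$ containing $I$—i.e. that $A/I$ is actually a field and not merely zero-dimensional reduced—since a priori $A/I$ could split as a product. The connectedness argument via irreducibility of $\Spec(A)$ (because $A$ is a domain) should resolve this, but one must be slightly careful that $I\neq A$, which follows because $\fd_A(A/I)\le d<\infty$ by Lemma \ref{wel}(vii) and a ring module of finite flat dimension over the nonzero ring $A$ is nonzero—or simply because $I$ is a proper ideal as each generator $x_i^{1/p^n}$ is a nonunit in the integral extension $A$ of $R$.
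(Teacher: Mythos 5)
The paper's own proof is a one-liner: after observing via Lemma~\ref{wel}(iii)/(iv) that the ideal is radical, it appeals to ``the fact that a radical ideal $\fa$ of a ring $A$ is maximal if $\Ht(\fa)=\dim A<\infty$,'' which encodes exactly the observation you start with. Your write-up heads in the same direction (radical; $\fm A\subseteq I$; hence every prime of $A$ over $I$ is maximal) and you correctly identify that the real content is the \emph{uniqueness} of that prime, but the argument you finally settle on to get uniqueness is wrong. You claim $\V(I)$ is connected because $\Spec(A)$ is irreducible; however, closed subsets of an irreducible space need not be connected. For instance, $\Spec\mathbb{Z}$ is irreducible, $6\mathbb{Z}$ is radical of height $1=\dim\mathbb{Z}$, yet $\V(6)=\{(2),(3)\}$ is disconnected and $6\mathbb{Z}$ is not maximal. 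So ``$A$ is a domain, hence $\Spec(A/I)$ is connected'' is not a valid step, and without it your chain (``connected reduced zero-dimensional integral over a field $\Rightarrow$ field'') has nothing to stand on.

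What actually closes the gap is ring-specific. For $A=R_\infty$ the extension $R\hookrightarrow R_\infty$ is purely inseparable, so $\Spec R_\infty\to\Spec R$ is a bijection (a universal homeomorphism); hence $R_\infty$ is quasi-local, there is a single maximal ideal over $\fm$, and the radical ideal $I$ (which all maximal ideals contain) must equal it. For $A=R^+$ one needs either quasi-locality of $R^+$ (which holds when $R$ is Henselian --- and that is in fact the hypothesis in every place the paper actually applies Corollary~\ref{local} to $R^+$, e.g.\ Theorem~4.6) or the stronger input that $\sum_i(x_i^{\infty})R^+$ is \emph{prime} rather than merely radical. Without one of these, a non-Henselian $R$ can have several maximal ideals of $R^+$ over $\fm$, all containing $I$, so $I$ would only be their intersection. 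Your instinct to flag uniqueness as the crux was exactly right; the irreducibility-gives-connectedness shortcut is the piece that fails. The sketch for part (ii) by reduction modulo $(p^{\infty})R^+$ is a sensible direction and matches how the paper treats the mixed-characteristic case elsewhere, but it inherits the same uniqueness issue and would need the same repair.
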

\begin{proof}
It follows by the fact that a radical ideal $\fa$ of a ring $A$ is
maximal if $\Ht(\fa)=\dim A<\infty$.
\end{proof}

\section{A Local Global Principle Theorem}

By Auslander's global dimension Theorem, $\gd A=\sup\{\pd_{A}(A/
\fa):\fa\trianglelefteq A\}$. So, in order to study  the global
dimension of $A$, it is enough for us to commute $\pd_{A}(A/ \fa)$
for all ideals $\fa$ of $A$. It would be interesting to know whether
the same equality remains true for some special types of ideals. For
instance, we know that if $A$ is Noetherian, then $$\gd
A=\sup\{\pd_{A}(A/ \fm):\fm \in\Max A\}=\sup\{\pd_{A}(A/ \fp):\fp
\in\Spec A\}.$$ Thus, the maximal ideals, prime ideals and finitely
generated ideals might be appropriate candidate for our proposed
ideals.

\begin{definition}Let $\Sigma$ be
a subset of the set of all ideals of $A$.  We say that $A$ has
finite global dimension on $\Sigma$, if $\sup\{\pd_{A}(A/
\fa):\fa\in\Sigma\}< \infty.$
\end{definition}

\begin{lemma}\label{cor}
Let $A$ be a  $\aleph_n$-Noetherian ring. Then, $A$ has finite
global dimension on finitely generated ideals if and only if $A$ has
finite global dimension.
\end{lemma}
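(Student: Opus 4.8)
The forward direction (finite global dimension $\Rightarrow$ finite global dimension on finitely generated ideals) is immediate from the definition, since finitely generated ideals form a subset of all ideals and $\pd_A(A/\fa)\le \gd A$ for every ideal $\fa$. So the content is entirely in the converse, and the plan is to reduce the projective dimension of an arbitrary cyclic module $A/\fa$ to the flat dimension, and then bound flat dimensions using only finitely generated ideals.

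Assume $A$ is $\aleph_n$-Noetherian and $\sup\{\pd_A(A/\fa):\fa \text{ finitely generated}\}=:m<\infty$. First I would handle flat dimensions. By Lemma \ref{g}(i), $\wdim A = \sup\{\fd(A/\fa):\fa \text{ finitely generated}\}$, and for a finitely generated ideal $\fa$ we have $\fd_A(A/\fa)\le \pd_A(A/\fa)\le m$; hence $\wdim A\le m<\infty$. Now for an \emph{arbitrary} ideal $\fa$ of $A$, since $A$ is $\aleph_n$-Noetherian, Lemma \ref{j}(iii) gives $\pd_A(A/\fa)\le \fd_A(A/\fa)+n+1\le \wdim A + n+1\le m+n+1$. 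Taking the supremum over all ideals $\fa$ and invoking Lemma \ref{g}(iii) (Auslander's formula $\gd A=\sup\{\pd_A(A/\fa):\fa\trianglelefteq A\}$) yields $\gd A\le m+n+1<\infty$, which is the desired conclusion.

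The only real subtlety — and the step I would double-check carefully — is the passage from finitely generated ideals to all ideals at the level of \emph{flat} dimension, i.e. the equality $\wdim A=\sup\{\fd(A/\fa):\fa \text{ f.g.}\}$ in Lemma \ref{g}(i). This is where finite generation is genuinely used: flat dimension is detected by finitely generated modules (Tor commutes with direct limits, and every module is a direct limit of finitely presented ones, with $\Tor$ of a direct limit bounded by the sup of the $\Tor$'s), whereas projective dimension is not. Everything else is bookkeeping: the $\aleph_n$-Noetherian hypothesis feeds exactly into Lemma \ref{j}(iii) to convert the now-finite weak dimension back into a finite global dimension, paying a correction term of $n+1$. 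No further obstacle is anticipated, since all the heavy lifting (Osofsky's $\pd\le\fd+n+1$ bound and Glaz's characterization of $\wdim$) is already available as cited results.
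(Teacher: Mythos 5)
Your argument is correct and is essentially the same as the paper's: both use Glaz's characterization $\wdim A=\sup\{\fd(A/\fa):\fa\ \text{f.g.}\}$, Osofsky's bound $\pd_A(A/\fa)\le\fd_A(A/\fa)+n+1$, and Auslander's formula, chained together to get $\gd A\le \wdim A+n+1\le \sup\{\pd(A/\fa):\fa\ \text{f.g.}\}+n+1<\infty$. The only difference is presentational (you spell out the trivial forward direction and name the bound $m$).
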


\begin{proof} By Auslander's global dimension Theorem, $\gd
A=\sup\{\pd_{A}(A/ \fa):\fa\trianglelefteq A\}$. By applying Lemma
\ref{j} iii),  we get that $$\pd_{A}(A/ \fa)\leq\fd_{A}(A/
\fa)+n+1.$$ Thus $\gd A\leq\wdim A+n+1$. In view of Lemma \ref{g}
i), we see that
$$\wdim A=\sup\{\fd(A/ \fa):\fa \emph{ is a finitely generated ideal
of }A\}.$$ We incorporate these observations in to  $\fd_{A}(A/
\fa)\leq\pd_{A}(A/ \fa)$ for all ideals $\fa$ of $A$. It turns out
that$$\wdim A\leq\sup\{\pd(A/ \fa):\fa \emph{ is a finitely
generated ideal of }A\}<\infty.$$This completes the proof.
\end{proof}

\begin{lemma} \label{le}Let $A$ be a coherent ring of finite
global dimension on maximal ideals. Then, $A$ has finite global
dimension on finitely generated ideals.
\end{lemma}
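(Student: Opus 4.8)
The plan is to reduce from finitely generated ideals to maximal ideals by combining coherence (which gives the Tor-criterion of Lemma~\ref{g} ii)) with a standard dimension-shifting argument. First I would observe that for a coherent ring $A$, every finitely generated ideal $\fa$ is finitely presented, hence $A/\fa$ is a finitely presented $A$-module; so by Lemma~\ref{g} ii), $\pd_A(A/\fa)\leq n$ if and only if $\Tor_{n+1}^A(A/\fa, A/\fm)=0$ for all maximal ideals $\fm$ of $A$. Thus it suffices to bound, uniformly in the finitely generated ideal $\fa$, the largest $n$ for which some $\Tor_{n+1}^A(A/\fa,A/\fm)$ is nonzero.

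Next I would pass from $A/\fa$ to $A$ itself via the short exact sequence $0\to\fa\to A\to A/\fa\to 0$: this gives $\Tor_{i}^A(A/\fa,A/\fm)\cong \Tor_{i-1}^A(\fa,A/\fm)$ for $i\geq 2$, and $\fa$, being finitely presented over the coherent ring $A$, is itself finitely presented, so $\pd_A(\fa)\leq\pd_A(A)+\,(\text{something controlled})$; more usefully, iterating the syzygy construction, a finitely presented module over a coherent ring has all its syzygies finitely presented, so one can keep shifting. Concretely, set $d:=\sup\{\pd_A(A/\fm):\fm\in\Max A\}<\infty$ by hypothesis. I claim $\pd_A(A/\fa)\leq d+1$ for every finitely generated $\fa$. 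Indeed, by Lemma~\ref{g} ii) it is enough to check $\Tor_{d+2}^A(A/\fa,A/\fm)=0$ for all $\fm$; using the sequence $0\to\fa\to A\to A/\fa\to 0$ we get $\Tor_{d+2}^A(A/\fa,A/\fm)\cong\Tor_{d+1}^A(\fa,A/\fm)$, and $\fa$ is finitely presented, so applying Lemma~\ref{g} ii) to $\fa$ in place of $M$, this vanishes provided $\pd_A(\fa)\leq d$, i.e.\ provided $\Tor_{d+1}^A(\fa,A/\fm')=0$ for all maximal $\fm'$; but $\Tor_{d+1}^A(\fa,A/\fm')\cong\Tor_{d+2}^A(A/\fa,A/\fm')$, which is circular as stated, so I would instead argue by a cleaner route.

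The cleaner route: by Lemma~\ref{g} ii) applied directly to the finitely presented module $M=A/\fa$, we have $\pd_A(A/\fa)\leq n$ iff $\Tor_{n+1}^A(A/\fa,A/\fm)=0$ for all $\fm\in\Max A$. Now use the long exact sequence of $0\to\fa\to A\to A/\fa\to 0$ against $A/\fm$: since $\Tor_i^A(A,A/\fm)=0$ for $i\geq 1$, dimension shifting gives $\Tor_{n+1}^A(A/\fa,A/\fm)\cong\Tor_n^A(\fa,A/\fm)$ for $n\geq 1$. Since $\fa$ is finitely presented (coherence), one may take a finite free presentation $A^{m}\to A^{k}\to\fa\to 0$ with kernel $K$ again finitely presented, and iterate: after $d$ steps one reaches a $d$-th syzygy $\Omega^d\fa$ which is finitely presented, and $\Tor_n^A(\fa,A/\fm)\cong\Tor_{n-d}^A(\Omega^d\fa,A/\fm)$. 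Taking $n=d+1$, we get $\Tor_{d+1}^A(A/\fa,A/\fm)\cong\Tor_{1}^A(\Omega^d\fa,A/\fm)$. It remains to see this vanishes. For that, note $\Tor_1^A(\Omega^d\fa,A/\fm)$ embeds into (a quotient controlled by) $\Tor_{d+1}^A(A/\fa,A/\fm)$; alternatively, and most directly, since $\pd_A(A/\fm)\leq d$, the module $A/\fm$ has a length-$\leq d$ projective resolution, so $\Tor_j^A(-,A/\fm)=0$ for all $j>d$ and \emph{all} $A$-modules, in particular for $j=d+1$ and the module $A/\fa$. Hence $\Tor_{d+1}^A(A/\fa,A/\fm)=0$ for every finitely generated $\fa$ and every $\fm$, so by Lemma~\ref{g} ii), $\pd_A(A/\fa)\leq d$. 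Therefore $\sup\{\pd_A(A/\fa):\fa \text{ finitely generated}\}\leq d<\infty$, which is exactly the assertion.

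The only genuinely delicate point is making sure the symmetric Tor really does vanish for $j>d$ once $\pd_A(A/\fm)\leq d$: this is immediate because projective dimension $\leq d$ of $A/\fm$ means $A/\fm$ admits a projective resolution of length $\leq d$, and computing $\Tor_\ast^A(X,A/\fm)$ from that resolution kills everything in homological degree $>d$ regardless of $X$. So the main obstacle is purely bookkeeping — confirming that coherence is used only to invoke Lemma~\ref{g} ii) (which needs $A/\fa$ finitely presented) and is not otherwise needed — and the proof is then a one-line consequence. I would write it up in that order: (1) $\fa$ finitely generated $\Rightarrow$ $A/\fa$ finitely presented by coherence; (2) for each $\fm$, $\pd_A(A/\fm)\leq d:=\sup_{\fm}\pd_A(A/\fm)$ gives $\Tor_{d+1}^A(A/\fa,A/\fm)=0$; (3) apply Lemma~\ref{g} ii) to conclude $\pd_A(A/\fa)\leq d$ uniformly.
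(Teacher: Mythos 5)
Your final argument—coherence makes $A/\fa$ finitely presented so Lemma~\ref{g}~ii) applies, and then the symmetry of $\Tor$ plus $\pd_A(A/\fm)\leq d$ forces $\Tor_{d+1}^A(A/\fa,A/\fm)=0$—is exactly the paper's proof, merely with the Tor-vanishing step spelled out (the paper states the criterion from Lemma~\ref{g}~ii) and immediately says ``this completes the proof,'' leaving the vanishing implicit). The earlier dimension-shifting detours in your write-up are unnecessary, as you yourself recognized; the three-line summary you give at the end is the clean version and matches the paper.
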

\begin{proof} Let $\fa$ be a finitely generated ideal of $A$. Then
$A/\fa$ is finitely presented. By Lemma \ref{g} ii),
$\pd_A(A/\fa)\leq n$ if and only if $\Tor_{n+1}^A(A/\fa,A/\fm)=0$
for all maximal ideals $\fm$ of $A$. This completes the proof.
\end{proof}
The following is our main result in this section.

\begin{theorem} \label{lg}(Local Global Principle) Let $A$
be a coherent ring which is $\aleph_n$-Noetherian for some integer
$n\geq -1$. Then the following are equivalent:
\begin{enumerate}
\item[i)] $A$ has finite global dimension,
\item[ii)] $A$ has finite  global dimension on radical ideals,
\item[iii)]$A$ has finite
global dimension on prime spectrum,
\item[iv)]  $A$ has finite  global dimension
on maximal ideals, and
\item[v)] $A$ has finite global dimension on finitely generated
ideals.
\end{enumerate}
\end{theorem}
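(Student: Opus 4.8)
The plan is to prove a cycle of implications among (i)--(v), exploiting the lemmas already established in this section. The skeleton is clear: (i) $\Rightarrow$ (ii) $\Rightarrow$ (iii) $\Rightarrow$ (iv) are trivial restrictions, since each successive class of ideals is contained in the previous one (every maximal ideal is prime, every prime is radical, and finite global dimension means finite $\pd$ on \emph{all} ideals, in particular radical ones). So the substance lies entirely in closing the loop, namely in showing (iv) $\Rightarrow$ (v) $\Rightarrow$ (i).

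First I would dispatch (iv) $\Rightarrow$ (v): this is precisely Lemma \ref{le}, which says that a coherent ring of finite global dimension on maximal ideals has finite global dimension on finitely generated ideals --- the hypothesis of coherence is used there through Lemma \ref{g} ii) to detect $\pd_A(A/\fa)\le n$ via vanishing of $\Tor_{n+1}^A(A/\fa,A/\fm)$ over maximal ideals $\fm$. Then (v) $\Rightarrow$ (i) is exactly Lemma \ref{cor}, which uses the $\aleph_n$-Noetherian hypothesis: Lemma \ref{j} iii) gives $\pd_A(A/\fa)\le \fd_A(A/\fa)+n+1$ for every ideal $\fa$, hence $\gd A\le \wdim A+n+1$, and Lemma \ref{g} i) reduces $\wdim A$ to flat dimensions of residue rings of finitely generated ideals, which are in turn bounded by their projective dimensions, finite by (v). Combining the two gives (iv) $\Rightarrow$ (i), and together with the trivial chain (i) $\Rightarrow$ (ii) $\Rightarrow$ (iii) $\Rightarrow$ (iv), all five statements are equivalent.

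One wrinkle worth stating explicitly is how (i) feeds into the first implications: if $A$ has finite global dimension then $\pd_A(A/\fa)\le \gd A<\infty$ for \emph{every} ideal $\fa$, so in particular the supremum over radical ideals, over primes, over maximal ideals, and over finitely generated ideals are all finite; thus (i) implies each of (ii)--(v) directly, and one does not even strictly need the chain structure. I would phrase the argument as: (i) $\Rightarrow$ (ii), (iii), (iv), (v) are immediate from Auslander's theorem; (iv) $\Rightarrow$ (v) is Lemma \ref{le}; (v) $\Rightarrow$ (i) is Lemma \ref{cor}; and the remaining implications (ii) $\Rightarrow$ (iii) $\Rightarrow$ (iv) hold because $\Max A\subseteq \Spec A\subseteq \{\text{radical ideals}\}$. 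Since both coherence and the $\aleph_n$-Noetherian condition are hypotheses of the theorem, every invocation of Lemmas \ref{cor}, \ref{le}, \ref{j}, and \ref{g} is legitimate.

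The proof therefore has no real obstacle --- it is a bookkeeping assembly of the preceding lemmas --- but the one point demanding care is making sure the coherence hypothesis is genuinely available where Lemma \ref{le} (and through it Lemma \ref{g} ii)) is applied: that lemma needs $A/\fa$ to be finitely presented, which holds precisely because $\fa$ is finitely generated and $A$ is coherent. If one tried to run the same detection argument for arbitrary (non-finitely-generated) ideals it would fail, which is exactly why the statement routes through class (v) rather than attempting a direct (iv) $\Rightarrow$ (i). I would present all of this compactly in a few lines, citing Lemmas \ref{cor} and \ref{le} by number and noting the trivial inclusions of ideal classes for the rest.
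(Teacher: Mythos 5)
Your proof is correct and follows exactly the same route as the paper: the chain (i) $\Rightarrow$ (ii) $\Rightarrow$ (iii) $\Rightarrow$ (iv) is dispatched as trivial, (iv) $\Rightarrow$ (v) is Lemma \ref{le}, and (v) $\Rightarrow$ (i) is Lemma \ref{cor}. The extra remarks on where coherence and the $\aleph_n$-Noetherian hypothesis are each used are accurate and helpful but do not change the structure of the argument.
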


\begin{proof} The implications $i) \Rightarrow ii)\Rightarrow iii)$ and
$iii)\Rightarrow iv)$ are trivial.

$iv) \Rightarrow v)$ This follows by  Lemma \ref{le}.

$v)\Rightarrow i)$ This is in Lemma \ref{cor}.
\end{proof}

\begin{remark} Consider the family  of coherent quasi-local $\aleph_n$-Noetherian rings
of finite global dimensions. This family contains strictly the class
of Noetherian regular local rings. Indeed, let $n\geq0$ be an
integer.  In view of Lemma \ref{j} ii), there exists a valuation
domain $A$ of global dimension $n+2$. Clearly, $A$ is coherent. By
applying Lemma \ref{j} i), we see that $A$ is $\aleph_n$-Noetherian
but not $\aleph_{n-1}$-Noetherian. In particular, $A$ is not
Noetherian.
\end{remark}

It is noteworthy to remark that the assumptions of the previous
results are really needed.

\begin{example}\label{elg}
\begin{enumerate}
\item[i)] There exists a ring $A$ of finite global dimension on finitely
generated ideals but not of finite global dimension. To see this,
let $A$  be a valuation domain of infinite global dimension. Note
that such  a ring exists, see Lemma \ref{j} ii). Since, any finitely
generated ideal of $A$ is principal,  it turns out that $A$ is
coherent and it has finite global dimension on finitely generated
ideals. Clearly, by Lemma 3.2 i), $A$ is not $\aleph_n$-Noetherian
for all integer $n\geq-1$.
\item[ii)] There exists a ring $A$ of finite global dimension on radical
ideals but not of finite global dimension on finitely generated
ideals. Let $A_0$ be the ring of polynomials with nonnegative
rational exponents in an indeterminant $x$ over a field. Let $T$ be
the localization of $A_0$ at $(x^{\alpha}:\alpha>0)$ and set
$A:=T/(x^{\alpha}u : \emph{ u is unit, } \alpha>1)$. Then by
\cite[Page 53]{O1}, $A$ has finite global dimension on maximal
ideals and $\pd(x^{1/2}A)=\infty$.  Note that $\dim A=0$, and so
$\Spec A=\Max A$. Thus, $A$ has finite global dimension on radical
ideals but not of finite global dimension on finitely generated
ideals. By  Lemma 2.1,  $A$ is $\aleph_0$-Noetherian. Note that
Lemma \ref{le} asserts that $A$ is  not coherent.
\end{enumerate}
\end{example}

Let $\fp$ be a prime ideal of a ring $A$. In \cite[Theorem 3]{N},
Northcott proved that $\rank_{A/ \fp} \fp/\fp^2 \leq \wdim A$. Here,
we give an application of this Theorem.

\begin{corollary}Let  $(V,\fm)$ be a valuation domain. Then the following are
equivalent:
\begin{enumerate}
\item[i)]  $\bigcap\fm^n=0$,
\item[ii)] $V$ is Noetherian,
\item[iii)] $V$ is a principal ideal domain, and
\item[iv)] $V$ is an unique factorization domain.
\end{enumerate}
\end{corollary}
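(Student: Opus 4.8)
The plan is to prove the four equivalences by a circular chain of implications, using the homological machinery assembled in Section 2 together with Northcott's inequality $\rank_{V/\fm}\fm/\fm^2\leq\wdim V$. The implications $ii)\Rightarrow iii)\Rightarrow iv)\Rightarrow i)$ are essentially standard facts about valuation domains, so the one substantive step is $i)\Rightarrow ii)$, and that is where I expect the main work to lie.

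First I would dispatch the easy implications. For $ii)\Rightarrow iii)$: a Noetherian valuation domain has its maximal ideal finitely generated, hence (being a valuation domain, where finitely generated ideals are principal) $\fm$ is principal; a local domain with principal maximal ideal all of whose ideals are comparable is a DVR, in particular a PID. For $iii)\Rightarrow iv)$ this is the classical fact that every PID is a UFD. For $iv)\Rightarrow i)$: in a UFD that is also a valuation domain, if $\bigcap\fm^n\neq0$ pick a nonzero $a$ in the intersection; writing $a$ as a product of primes and using that $\fm$ is the only nonzero prime (valuation domains are local with a unique chain, so if $\fm$ were not principal... ) — more cleanly, in a valuation UFD the maximal ideal is generated by a single prime $t$, so $\bigcap(t^n)=0$ by unique factorization since any nonzero element has only finitely many factors of $t$.

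The heart of the argument is $i)\Rightarrow ii)$. Suppose $\bigcap\fm^n=0$ but $V$ is not Noetherian; then $\fm$ is not finitely generated, so by Lemma~\ref{j} i) we have $\pd(\fm)=m+1$ for some $m\geq 0$ with $\fm$ generated by $\aleph_m$ but no fewer elements; in particular $\wdim V=\gd V\geq\pd(V/\fm)=\pd(\fm)+1\geq 2$ (indeed $\gd V$ could be infinite). The condition $\bigcap\fm^n=0$ forces, via a standard Artin--Rees/Krull-intersection style argument adapted to the valuation setting, that $V$ is one-dimensional: if there were a nonzero prime $\fp\subsetneq\fm$, then every element of $\fp$ lies in every power of $\fm$ (since $v(x)$ for $x\in\fp$ exceeds $v(y)$ for all $y\in\fm\setminus\fp$, so $x\in\fm^n$ for all $n$), contradicting $\bigcap\fm^n=0$; hence $\dim V=1$. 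Now I would feed this into Northcott's theorem: $\rank_{V/\fm}\fm/\fm^2\leq\wdim V$. The point is to show $\bigcap\fm^n=0$ together with $\dim V=1$ forces $\fm/\fm^2$ to be nonzero and, if $\fm$ is not finitely generated, to have infinite rank (or at least rank exceeding every bound), while simultaneously bounding $\wdim V$ — or, alternatively, to observe directly that $\bigcap\fm^n=0$ implies $\fm\neq\fm^2$ and then that a non-finitely-generated $\fm$ with $\fm\neq\fm^2$ cannot occur in a valuation domain of finite weak dimension. The main obstacle is making this last quantitative step airtight: reconciling the cardinality of a minimal generating set of $\fm$ (which governs $\pd\fm$ by Osofsky) with the finiteness consequences of the Krull intersection condition. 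I would resolve it by noting that in a valuation domain $\fm/\fm^2\neq 0$ already implies $\fm$ is principal (if $\fm=\fm^2$ is the only alternative for comparable ideals once no single element generates $\fm$), closing the loop.
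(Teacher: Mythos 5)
The key computation $\wdim V=\gd V$ in your proof of $i)\Rightarrow ii)$ is false, and it is exactly the point at issue. For a valuation domain every finitely generated ideal is principal, hence flat, so $\wdim V\leq 1$ by Lemma~\ref{g}~i); but $\gd V$ can be any nonnegative integer or $\infty$ (Lemma~\ref{j}~ii)). These two quantities are wildly different here, and the whole reason Northcott's inequality $\rank_{V/\fm}\fm/\fm^2\leq\wdim V$ has teeth is precisely that $\wdim V=1$ no matter how badly non-Noetherian $V$ is. The Osofsky digression about $\pd(\fm)$ is irrelevant to the argument and, via the false identification $\wdim=\gd$, would actively mislead you. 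Relatedly, the plan to derive a contradiction by showing $\fm/\fm^2$ has ``infinite rank'' cannot succeed: in \emph{any} valuation domain $\fm/\fm^2$ has rank at most one (if $a,b\in\fm\setminus\fm^2$, then $aV$ and $bV$ are comparable, say $a=bc$; if $c\in\fm$ then $a\in\fm^2$, so $c$ is a unit and $\overline a,\overline b$ are proportional). There is no tension with Northcott to exploit; the bound $\rank\leq 1$ is automatic from comparability and is confirmed, not threatened, by Northcott.

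Your closing parenthetical is in fact the paper's argument, and it works as a direct proof, not a contradiction. Since $\wdim V=1$, Northcott gives $\rank_{V/\fm}\fm/\fm^2\leq 1$. The Hausdorff hypothesis $\bigcap\fm^n=0$ forces $\fm\neq\fm^2$, so the rank is exactly $1$ and $\fm=aV+\fm^2$ for some $a$. Comparability of $aV$ and $\fm^2$ then gives $\fm^2\subsetneq aV$ (the other inclusion would give $\fm=\fm^2$), hence $\fm=aV$ and $V$ is a DVR. Note that the reduction to $\dim V=1$ you attempt is unnecessary (though your argument for it, done carefully with $\fp=\fm\fp$, can be made to work). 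For $iv)\Rightarrow i)$ your idea is sound but you assert without proof that a valuation UFD has principal maximal ideal; the paper supplies this by quoting $\Ht\fm=1$ from a reference and then using that height-one primes in a UFD are principal, whereas you could also argue directly using comparability and primality of a prime element $t$.
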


\begin{proof}
Without loss of generality, we can assume that $V$ is not a field.

 $i)\Rightarrow ii)$
First, note that any finitely generated ideal of $V$ is principal.
Then by Lemma \ref{g} i),
$$\wdim V=\sup\{\fd(V/ \frak r):\frak r \emph{ is a finitely generated ideal
of }V\}=1.$$  By \cite[Theorem 3]{N}, $\rank_{V/ \fm}(\fm/
\fm^2)\leq\wdim V=1$. Thus, $\fm=aV+\fm^2$ for some $a\in V$. As the
ideals of a valuation domain are linearly ordered by means of
inclusion, one has either $aV\subseteq \fm^2$ or $\fm^2\subseteq
aV$. Due to the Hausdorff assumption on $\fm$ we can assume that
$\fm\neq\fm^2$, and so $\fm=aV$. In view of \cite[Exercise 3.3]{M},
$V$ is a discrete valuation domain.

 $ii)\Rightarrow iii)$ and $iii)\Rightarrow iv)$   are
well-known.

 $vi)\Rightarrow i)$ By \cite[Remark 3.13]{AT1},
  $\Ht\fm=1$. Thus $\fm = xV$ for some $x$, because $V$ is an
unique factorization domain. Therefore $V$ is Noetherian, since each
of its prime ideals are finitely generated. The argument can now by
completed by applying Krull's Intersection Theorem.
\end{proof}

\section{Projective dimension of certain modules over $R^{+}$}

Throughout this section $R$ is a domain. The aim of this section is
to establish the projective dimensions of residue ring of certain
ideals of $R^{+}$.  First we recall the following result of Auslander.

\begin{lemma}\label{Aus}(\cite[Lemma 2.18]{O1}) Let $A$ be a ring and
let $\Gamma$ be a well-ordered set. Suppose that $\{N_\gamma :
\gamma\in \Gamma\}$ is a collection of submodules of an $A$-module
$M$ such that $\gamma'\leq \gamma$ implies $N_{\gamma'}\subseteq
N_{\gamma}$ and $M = \bigcup_{\gamma\in \Gamma} N_\gamma$. Suppose
that $\pd_A (N_\gamma/\bigcup_{\gamma'<\gamma} N_{\gamma'})\leq n$
for all $\gamma\in \Gamma$. Then $pd_A(M)\leq n$.
\end{lemma}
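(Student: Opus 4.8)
The plan is to prove the statement by induction on $n$, weaving a transfinite recursion over $I$ into each step. Since a well-ordered set is order isomorphic to a unique ordinal, I would first assume $I$ is an ordinal and abbreviate $N_{<i}:=\bigcup_{j<i}N_j$, so that $N_{<0}=0$ and the hypothesis becomes $\pd_A(N_i/N_{<i})\le n$ for all $i\in I$.

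For the base case $n=0$ one wants $M$ projective. Each quotient $N_i/N_{<i}$ being projective, the exact sequence $0\to N_{<i}\to N_i\to N_i/N_{<i}\to 0$ splits, and I would pick $P_i\subseteq N_i$ with $N_i=N_{<i}\oplus P_i$ and $P_i\cong N_i/N_{<i}$ projective. A transfinite induction on $i$ (the successor step using $N_{<i+1}=N_i$, the limit step using $N_{<i}=\bigcup_{j<i}N_j$) shows $N_i=\sum_{j\le i}P_j$, hence $M=\sum_i P_i$; and this sum is direct, since in any relation the term of largest index would lie in $P_i\cap N_{<i}=0$. Thus $M=\bigoplus_i P_i$ is projective.

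For the inductive step ($n\ge 1$, assuming the claim for $n-1$), for each $i$ I would choose a projective $P_i$ and a surjection $\pi_i\colon P_i\twoheadrightarrow N_i/N_{<i}$ with kernel $L_i$; since $\pd_A(N_i/N_{<i})\le n$, dimension shifting gives $\pd_A(L_i)\le n-1$. Next, by transfinite recursion I would build compatible surjections $g_i\colon\bigoplus_{j\le i}P_j\twoheadrightarrow N_i$: at stage $i$ lift $\pi_i$ through $N_i\twoheadrightarrow N_i/N_{<i}$ (possible as $P_i$ is projective) and add this lift to the map already assembled on $\bigoplus_{j<i}P_j$; surjectivity is clear, because $N_{<i}$ is hit by the latter map and $N_i/N_{<i}$ by $\pi_i$. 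These glue to a surjection $g\colon P:=\bigoplus_i P_i\twoheadrightarrow M$. Put $M':=\ker g$ and $N'_i:=M'\cap\bigoplus_{j\le i}P_j=\ker g_i$; then $M'=\bigcup_i N'_i$ is a well-ordered increasing union, and, writing $N'_{<i}:=\bigcup_{j<i}N'_j=\ker\!\big(g|_{\bigoplus_{j<i}P_j}\big)$, comparing the short exact sequences $0\to N'_{<i}\to\bigoplus_{j<i}P_j\to N_{<i}\to 0$ and $0\to N'_i\to\bigoplus_{j\le i}P_j\to N_i\to 0$ through the snake lemma (all three vertical inclusions are injective, so the induced sequence of cokernels is exact) gives $0\to N'_i/N'_{<i}\to P_i\xrightarrow{\pi_i} N_i/N_{<i}\to 0$, whence $N'_i/N'_{<i}\cong L_i$ and $\pd_A(N'_i/N'_{<i})\le n-1$. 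Applying the induction hypothesis to the filtration $\{N'_i\}$ of $M'$ yields $\pd_A(M')\le n-1$, and then $0\to M'\to P\to M\to 0$ with $P$ free forces $\pd_A(M)\le n$.

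The delicate part, in both the base case and the inductive step, is the bookkeeping at limit ordinals: one must check that the partial maps $g_j$, $j<i$, are mutually compatible (each extending its predecessors) so that they genuinely glue, that the two descriptions of $N'_{<i}$ — as $\bigcup_{j<i}N'_j$ and as $\ker\!\big(g|_{\bigoplus_{j<i}P_j}\big)$ — agree, and that $\bigcup_{j<i}\bigoplus_{k\le j}P_k=\bigoplus_{k<i}P_k$ when $i$ is a limit. Granting these, the successor and limit stages of the recursion become formally identical and the snake-lemma computation above applies uniformly, so the remaining verifications are routine.
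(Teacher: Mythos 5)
Your proof is correct, and it is essentially the standard argument for this lemma (Auslander's theorem, reproduced in Osofsky's book, which the paper cites without giving a proof): induction on $n$, with the $n=0$ case exhibiting $M$ as an internal direct sum $\bigoplus_i P_i$ of projectives via a transfinite splitting, and the inductive step lifting the filtration through a surjection from $P=\bigoplus_i P_i$ onto $M$, identifying the successive quotients of the kernel filtration with the $L_i=\ker\pi_i$ via the snake lemma, and invoking the induction hypothesis on $M'=\ker g$. The transfinite bookkeeping you flag at the end (compatibility of the $g_j$, the two descriptions of $N'_{<i}$, and $\bigcup_{j<i}\bigoplus_{k\le j}P_k=\bigoplus_{k<i}P_k$ at limits) does work out exactly as you indicate, using that elements of a direct sum have finite support. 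One small slip: $P$ is a direct sum of projectives and hence projective, not necessarily free as you wrote in the last line; this is harmless for the dimension shift, and in any case one could have chosen each $P_i$ free from the outset.
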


\begin{lemma} \label{key}Let $R$ be a domain and
let $x$ be a nonzero and nonunit element of $R^+$.
\begin{enumerate}
\item[i)] If $R$ is of characteristic zero, then $(x^{\infty})R^+$
has a bounded free resolution of countably generated free
$R^+$-modules of length one. In particular,
$\pd_{R^+}((x^{\infty})R^+)\leq1$. The equality holds, if $R$ is
Noetherian Henselian and local.
\item[ii)] If $R$ is of prime characteristic $p$, then $(x^{\infty})R^+$
has a bounded free resolution of countably generated free
$R^+$-modules of length one. The equality holds, if $R$ is
Noetherian Henselian and local.
\end{enumerate}
\end{lemma}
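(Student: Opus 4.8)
The plan is to realize $(x^\infty)R^+$ as a countable directed union of principal ideals and to show that each successive quotient is cyclic with projective dimension at most one; then Lemma \ref{Aus} (Auslander) will force $\pd_{R^+}((x^\infty)R^+)\le 1$, and a countability bookkeeping argument will give the free resolution by countably generated free modules. Concretely, in characteristic zero set $x_n := x^{1/n!}$ (so that $x_m \mid x_n$ in $R^+$ whenever $n \mid m$, arranging a genuine chain), and in characteristic $p$ set $x_n := x^{1/p^n}$; in both cases $(x^\infty)R^+ = \bigcup_n x_n R^+$ is an increasing union indexed by $\mathbb N$, which we well-order in the obvious way.

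First I would analyze a single step of the filtration, i.e. the quotient $x_{n+1}R^+ / x_n R^+$. Writing $x_n = x_{n+1}^{k}$ for the appropriate exponent $k>1$ (namely $k = (n+1)$ or $k=p$), multiplication by $x_{n+1}^{k-1}$ gives a surjection $R^+ \twoheadrightarrow x_n R^+ \subseteq x_{n+1}R^+$, hence a surjection $R^+ \to x_{n+1}R^+/x_nR^+$; so this quotient is cyclic. Its annihilator: an element $r\in R^+$ satisfies $r x_{n+1}^{k-1} \in x_n R^+ = x_{n+1}^k R^+$ iff $r \in x_{n+1} R^+$ (using that $R^+$ is a domain and $x_{n+1}$ is a nonzerodivisor). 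Thus $x_{n+1}R^+/x_nR^+ \cong R^+/x_{n+1}R^+$, and since $x_{n+1}$ is a nonzerodivisor, $0 \to R^+ \xrightarrow{x_{n+1}} R^+ \to R^+/x_{n+1}R^+ \to 0$ is a free resolution of length one. Hence $\pd_{R^+}\big(x_{n+1}R^+/x_nR^+\big) \le 1$ for every $n$; the base term $x_1 R^+ \cong R^+$ is free. Applying Lemma \ref{Aus} with $M = (x^\infty)R^+$, $I = \mathbb N$ and $N_i = x_i R^+$ yields $\pd_{R^+}((x^\infty)R^+) \le 1$.

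To upgrade this to a resolution by \emph{countably generated} free modules, I would assemble the step-by-step resolutions. Each quotient $N_{i+1}/N_i \cong R^+/x_{i+1}R^+$ has the two-term free resolution above with free modules of rank one; a standard argument (the one underlying Lemma \ref{Aus}) splices these, after choosing compatible lifts, into a length-one resolution $0 \to F_1 \to F_0 \to (x^\infty)R^+ \to 0$ in which $F_0$ and $F_1$ are direct sums of countably many rank-one free modules, hence countably generated free $R^+$-modules. This proves the first sentence of each of i) and ii).

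It remains to address the equality statements, which is where the real content lies. For ii), I must show $\pd_{R^+}((x^\infty)R^+) \ge 1$, i.e. that $(x^\infty)R^+$ is not projective. Since $(x^\infty)R^+$ is by Lemma \ref{wel} iii) a radical ideal, Lemma \ref{4.2.2} gives $(x^\infty)R^+ = ((x^\infty)R^+)^2$; an idempotent-type argument shows a nonzero projective (hence locally free, hence faithfully flat on its support) ideal $\fa$ with $\fa = \fa^2$ would have to be generated by an idempotent, forcing $\fa = R^+$ in the domain $R^+$ — impossible here because $x$ is a nonunit, so $(x^\infty)R^+$ is a proper ideal (its elements have positive value under the valuation of Lemma \ref{wel} i)). Hence $\pd_{R^+}((x^\infty)R^+) = 1$. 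For the equality in i), under the hypothesis that $R$ is Noetherian, Henselian and local of residue prime characteristic, the same argument applies once we know $(x^\infty)R^+$ is a proper radical ideal: Lemma \ref{wel} iv) (with $x=p$ in the mixed characteristic case, and Lemma \ref{wel} iii) in the equal characteristic zero case after reducing suitably) provides radicality, and properness again follows from $x$ being a nonunit together with the valuation. The main obstacle, and the step I would be most careful about, is justifying that a nonzero idempotent ideal in $R^+$ must be the unit ideal — this uses that a projective ideal is finitely generated locally and invertible on its support, so $\fa_\fp = \fa_\fp^2$ forces $\fa_\fp = R^+_\fp$ at every prime in its support, whence $\fa = R^+$; here one should double-check that $(x^\infty)R^+$, being proper, indeed fails this, giving the desired contradiction with projectivity.
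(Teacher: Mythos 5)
Your bound $\pd_{R^+}((x^\infty)R^+)\le 1$ follows the same route as the paper (write $(x^\infty)R^+$ as a chain of principal ideals and apply Auslander's Lemma~\ref{Aus}); the reindexing by $n!$ is unnecessary since $x^{1/n}=x^{1/(n+1)}\cdot x^{1/n(n+1)}$ already makes $\{x^{1/n}R^+\}$ a chain. Two caveats there. First, your identification $x_{n+1}R^+/x_nR^+\cong R^+/x_{n+1}R^+$ is off: with $x_n=x_{n+1}^k$, the annihilator of $x_{n+1}$ modulo $x_nR^+$ is $x_{n+1}^{k-1}R^+$, so the quotient is $R^+/x_{n+1}^{k-1}R^+$; this does not affect the $\pd\le 1$ bound, but it is not the isomorphism you wrote. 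Second, the ``splice the step-by-step resolutions'' sentence is doing real work: Lemma~\ref{Aus} by itself only gives the numerical bound, not a length-one resolution by countably generated free modules. The paper separately constructs $F_1=\ker\varphi$ with an explicit countable basis $\{\eta_n\}$; you would need to actually carry out the splicing (and check the resulting $F_0,F_1$ are free of countable rank) rather than invoke it as standard.

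For the lower bound your route is genuinely different from the paper's, and in principle cleaner: the paper contradicts projectivity in i) via the valuation ($v(x^{1/n})\to 0$ against a principal generator) and in ii) by localizing $R$ at a height-one prime to make $R^+$ quasi-local and then applying Nakayama, whereas you argue that a nonzero projective idempotent ideal of a domain is locally free of rank one at every prime (Kaplansky, no Noetherian hypothesis needed), hence locally the unit ideal, hence the unit ideal — contradicting properness. That argument is correct and avoids the localization detour. The genuine gap is in how you obtain $\fa=\fa^2$ for part i). You state the hypothesis as ``Noetherian, Henselian and local of residue prime characteristic,'' but the lemma's i) imposes no residue characteristic restriction, and in the equicharacteristic-zero case neither Lemma~\ref{wel}~iii) nor iv) applies, so there is no radicality to quote and ``after reducing suitably'' does not repair this. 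The fix is easy and makes the detour through radicality (and Lemma~\ref{4.2.2}) unnecessary in both i) and ii): $(x^\infty)R^+$ is idempotent for trivial reasons, since $x^{1/n}=(x^{1/2n})^2\in\bigl((x^\infty)R^+\bigr)^2$ (resp.\ $x^{1/p^n}=x^{(p-1)/p^{n+1}}\cdot x^{1/p^{n+1}}$). Similarly, your properness claim ``elements have positive value under the valuation'' invokes Lemma~\ref{wel}~i), which requires $R$ Noetherian local — not assumed in ii); but properness follows directly, since $1\in(x^\infty)R^+$ would force some $x^{1/n}$ to be a unit and hence $x$ to be a unit. With those repairs your approach gives a valid and arguably more uniform proof of the equality in both parts.
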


\begin{proof} i)
Clearly, $\frac{1}{n}-\frac{1}{n+1}>0$, and so
$x^{\frac{1}{n}-\frac{1}{n+1}}\in R^+$. Note that
$x^{1/n}=x^{1/n+1}x^{\frac{1}{n}-\frac{1}{n+1}}$. In particular,
$x^{1/n}R^+\subseteq x^{1/n+1}R^+$. For each $n$, the exact sequence
$$0\longrightarrow x^{1/n}R^+\longrightarrow x^{1/n+1}R^+
\longrightarrow x^{1/n+1}R^+ / x^{1/n}R^+ \longrightarrow 0$$ is a
projective resolution of $x^{1/n+1}R^+ / x^{1/n}R^+$. So $\pd_{R^+}(
x^{1/n+1}R^+ / x^{1/n}R^+)\leq 1$. By  Lemma \ref{Aus},
$\pd_{R^+}((x^{\infty})R^+)\leq1$. Now, we construct the concrete
free resolution of $(x^{\infty})R^+$. Let $F_0$ be a free
$R^+$-module with base $\{e_n:n\in \mathbb{N}\}$. The assignment
$e_n\mapsto x^{1/n}$ provides a natural epimorphism
$\varphi:F_0\longrightarrow(x^{\infty})R^+$. We will show that
$\ker\varphi$ is free over $R^+$. For each integer $n$ set
$\eta_n:=e_n-x^{1/n(n+1)}e_{n+1}$. Let $F_1$ be a submodule of
$\ker\varphi$ generated by $\{\eta_n:n\in \mathbb{N}\}$. It is easy
to see that $F_1\subseteq\ker\varphi$. In fact, the equality holds.
Let $\Sigma_{i=1}^n \alpha_i e_i$ be in $\ker\varphi$, where
$\alpha_i\in R^+$ for all $1\leq i\leq n$. It is straightforward to
check that $$\alpha_n x^{1/n}=-\Sigma_{i=1}^{n-1}\alpha_i
x^{1/i}=\beta_{n-1}x^{1/n-1}=\beta_{n-1}x^{1/n}x^{1/n^2-n},$$ where
$\beta_{n-1}:=-(\alpha_1 x^{\frac{n-2}{n-1}}+\alpha_2
x^{\frac{n-3}{2(n-1)}}+\alpha_3
x^{\frac{n-4}{3(n-1)}}+\cdots+\alpha_{n-1})$. Thus
$\alpha_n=\beta_{n-1}x^{1/n^2-n}$, because $R^+$ is an integral
domain. Hence
$$\Sigma_{i=1}^n \alpha_i
e_i+\beta_{n-1}\eta_{n-1}=\Sigma_{i=1}^{n-2} \alpha_i
e_i+(\alpha_{n-1}+\beta_{n-1})e_{n-1}\in \ker\varphi.$$ By using
induction on $n$, one deduces that $\Sigma_{i=1}^{n-2} \alpha_i
e_i+(\alpha_{n-1}+\beta_{n-1})e_{n-1}\in F_1$. This yields that
$\Sigma_{i=1}^n \alpha_i e_i\in F_1$,  because
$\beta_{n-1}\eta_{n-1}\in \ker\varphi$. Thus $\ker\varphi$ is
generated by the set $\{\eta_n|n\in\mathbb{N}\}$. In order to
establish $\pd_{R^+}((x^{\infty})R^+)\leq1$, it is therefore enough
for us to prove that $F_1$ is a free $R^+$-module with base
$\{\eta_n:n\in \mathbb{N}\}$. Assume that $\Sigma_{i=1}^n \alpha_i
\eta_i=0$, where $\alpha_i \in R^+$. View this equality in $F_0$.
The coefficient of $e_{n+1}$ in the left hand side of the equality
is $-x^{1/n^2+n}\alpha_n$. So $\alpha_n=0$. Continuing inductively,
we get that $\alpha_{n-1}=\cdots =\alpha_{1}=0$. Hence $F_1$ is a
free $R^+$-module. This yields the desired claim.\footnote{Note that
we proved $\pd_{R^+}((x^{\infty})R^+)\leq1$ by two different
methods. The first one is an easy application of a result Auslander,
see Lemma \ref{Aus}. The second actually proved a more stronger
result: $(x^{\infty})R^+$ has a free resolution of countably
generated free $R^+$-modules of length bounded by one.}

Now, we assume that $R$ is Noetherian local and Henselian. It turns
out that $R^+$ is quasi-local. If $\pd_{R^+}((x^{\infty})R^+)=1$ was
not be the case, then $(x^{\infty})R^+$ should be projective and
consequently free, see \cite[Theorem 2.5]{M}. Over a domain $A$ an
ideal $\fa$ is free if and only if it is principal. Indeed, suppose
on the contrary that $\fa$ is free with the base set
$\Lambda:=\{\lambda_\gamma:\gamma\in \Gamma\}$ and $|\Gamma|\geq2$.
Let $\lambda_1,\lambda_2\in\Lambda$.  The equation
$r\lambda_1+s\lambda_2=0$ has a nonzero solution as $r:=\lambda_2$
and $s:=-\lambda_1$, a contradiction. So, $(x^{1/n}:n\in
\mathbb{N})R^+=cR^+$ for some $c\in R^+$. We adopt the notation of
Lemma \ref{wel} i).  We see that $v(r)\geq v(c)>0$ for all
$r\in(x^{\infty})R^+$. Now, let $n\in\mathbb{N}$ be such that
$v(c)>v(x)/n$. Then $v(x^{1/n})=v(x)/n<v(c)$, a contradiction.

ii) Let $F_0$ be a free $R^+$-module with base $\{e_n:n\in
\mathbb{N}\cup\{0\}\}$. The assignment $e_n\mapsto x^{1/p^n}$
provides a natural epimorphism
$\varphi:F\longrightarrow(x^{\infty})R^+$. Set
$\eta_n':=e_n-x^{\frac{p-1}{p^{n+1}}}e_{n+1}$. In the proof of i)
replace $\eta_n$ by $\eta_n'$. By making straightforward
modification of the proof i), one can check easily that $\pd_{R^+}
(\fa)\leq1$.
\end{proof}

Now, we establish another preliminary lemma.

\begin{lemma} \label{gen} Let $(R,\fm)$ be a quasi-local domain of residue
prime characteristic $p$. Let $x_1,\cdots,x_{\ell}$ be a finite
sequence of nonzero and nonunit elements of $R^+$. In mixed
characteristic case assume in addition that $x_1=p$. Then
$R^+/\sum_{i=1}^{\ell}(x_i^{\infty})R^+$ has a bounded free
resolution of countably generated free $R^+$-modules of length
$2\ell$.
\end{lemma}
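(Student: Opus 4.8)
I would prove this by induction on $\ell$, using Lemma \ref{key} as the base case and building the longer resolutions by an iterated mapping-cone / change-of-rings argument. The key structural fact is that, writing $\fa_j := \sum_{i=1}^{j}(x_i^{\infty})R^+$, each $\fa_j$ is a radical ideal of $R^+$ by Lemma \ref{wel} iii)--iv), hence idempotent by Lemma \ref{4.2.2}, and moreover $\fd_{R^+}(R^+/\fa_\ell)\leq \ell$ by Lemma \ref{wel} vii). The point is that a bound on flat dimension plus the fact that $R^+$ is $\aleph_0$-Noetherian when $R$ is (Lemma \ref{co} iii), applied after reducing to a countable subextension) would via Lemma \ref{j} iii) already give $\pd_{R^+}(R^+/\fa_\ell)\leq \ell+1$; but here we want the sharper bound $2\ell$ \emph{together with} an explicit resolution by \emph{countably generated} free modules, so I would argue more directly.

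**Main steps.** First, the base case $\ell=1$ is exactly Lemma \ref{key}: $(x_1^\infty)R^+$ has a length-one countably-generated free resolution, so $R^+/(x_1^\infty)R^+$ has one of length $2$. For the inductive step, suppose $R^+/\fa_{\ell-1}$ has a countably generated free resolution $\mathbb{F}_\bullet$ of length $2(\ell-1)$. Let $\bar x_\ell$ be the image of $x_\ell$ in $\bar A := R^+/\fa_{\ell-1}$. I would like to say $R^+/\fa_\ell = \bar A/(\bar x_\ell^\infty)\bar A$ and apply (a version of) Lemma \ref{key} over $\bar A$ to get a length-one free resolution of $(\bar x_\ell^\infty)\bar A$ as an $\bar A$-module; tensoring up / splicing with $\mathbb{F}_\bullet$ by the standard change-of-rings spectral sequence (or an explicit iterated mapping cone) then yields a free $R^+$-resolution of $R^+/\fa_\ell$ of length $2(\ell-1)+2 = 2\ell$, still countably generated at each spot since countable $\otimes$ countable is countable and finite splicing preserves this. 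The subtlety is that $\bar A$ is not a domain, so the "ideal is free $\iff$ principal" part of Lemma \ref{key} is unavailable; but we only need the \emph{upper} bound $\pd_{\bar A}((\bar x_\ell^\infty)\bar A)\leq 1$, and that part of the proof of Lemma \ref{key} i)/ii) — the explicit construction of $F_0 \twoheadrightarrow (\bar x_\ell^\infty)\bar A$ with free kernel $F_1$ on the $\eta_n$ — used only that $\bar A$ is a ring in which the relevant $x_\ell^{1/p^n}$ make sense and that one can cancel $x_\ell$-powers. Cancellation is where $\fa_\ell$ being radical (so that $x_\ell$ is a nonzerodivisor modulo... — actually one must be careful here) enters; alternatively I would avoid cancellation entirely and instead combine the flat-dimension bound $\fd_{\bar A}(\bar A/(\bar x_\ell^\infty)\bar A)\leq 1$ (from Lemma \ref{wel} vii) applied to the single element $x_\ell$, noting $\fd$ behaves well under the quotient) with Lemma \ref{j} iii) over the $\aleph_0$-Noetherian ring $\bar A$ to get $\pd \leq 2$ — but that costs an extra $+1$ per step and breaks the $2\ell$ bound, so the cancellation route seems necessary.

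**The main obstacle.** The hard part will be making the change-of-rings step produce the clean bound $2\ell$ rather than $2\ell + (\text{junk})$, and in particular verifying that the kernel-is-free argument of Lemma \ref{key} survives passage to the non-domain quotient $\bar A = R^+/\fa_{\ell-1}$. Concretely: in the proof of Lemma \ref{key}, freeness of $F_1$ on $\{\eta_n\}$ was deduced by reading off coefficients of $e_{n+1}$ and using that $R^+$ is a domain to conclude $x^{1/n^2+n}\alpha_n = 0 \Rightarrow \alpha_n = 0$. Over $\bar A$ this implication can fail. I expect the fix is to observe that $x_\ell$ (equivalently each $x_\ell^{1/p^n}$) acts as a nonzerodivisor on the relevant submodules because $\fa_{\ell-1}$ is prime-like enough — more precisely, one should set things up so that the ambient module is $R^+/\fa_{\ell-1}$ localized appropriately, or invoke that $\fa_{\ell-1} + (x_\ell^\infty)R^+$ being radical forces the needed injectivity of multiplication by $x_\ell$ on $R^+/\fa_{\ell-1}$. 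If even that fails, the fallback is to build the resolution of $R^+/\fa_\ell$ directly, in one go, as an iterated Koszul-type complex on the system $\{x_i^{1/p^n}\}_{i,n}$ and prove exactness by hand using the valuation of Lemma \ref{wel} i), which is laborious but avoids all change-of-rings bookkeeping.
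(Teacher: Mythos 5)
Your outline shares the base case (Lemma \ref{key}) and the idea of induction on $\ell$ with the paper, but the inductive step you propose — passing to the quotient ring $\bar A = R^+/\fa_{\ell-1}$ and trying to rerun Lemma \ref{key} there, then splicing via change of rings — hits exactly the obstacle you flag, and the paper does not take that route at all. The cancellation/free-kernel argument of Lemma \ref{key} really does rely on $R^+$ being a domain, and $\bar A$ is not one; moreover the "fix" you suggest (that multiplication by $x_\ell$ is injective on $R^+/\fa_{\ell-1}$ because the relevant ideals are radical) is not something the paper establishes or needs, and it is not obviously true. So as written the proposal has a genuine gap.

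The paper sidesteps all of this by never leaving $R^+$. Let $\mathbf{Q}^i$ be the length-$2$ countably generated free $R^+$-resolution of $R^+/(x_i^\infty)R^+$ supplied by Lemma \ref{key}. One simply takes the tensor product complex $\mathbf{Q}^1 \otimes_{R^+} \cdots \otimes_{R^+} \mathbf{Q}^\ell$ of $R^+$-complexes (a complex of countably generated free $R^+$-modules of length $\leq 2\ell$) and shows, by induction, that it is acyclic with $H_0 = R^+/\sum(x_i^\infty)R^+$. The acyclicity is a K\"unneth computation: $H_n(\mathbf{Q}^1 \otimes \mathbf{P}) \cong \Tor_n^{R^+}(R^+/(x_1^\infty)R^+,\, R^+/\sum_{i\geq 2}(x_i^\infty)R^+)$, and these Tor's vanish for $n\geq 1$ because (a) $(x_1^\infty)R^+$ is a directed union of cyclic free modules, hence flat, which kills $\Tor_n$ for $n>1$; and (b) $\Tor_1$ is the intersection-over-product module, which vanishes by Hochster's result \cite[Proposition 2.11(3)]{H1} that $(x_1^\infty)R^+ \cap \sum_{i\geq2}(x_i^\infty)R^+ = (x_1^\infty)R^+\cdot\sum_{i\geq2}(x_i^\infty)R^+$. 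Your proposal uses neither the flatness of $(x_i^\infty)R^+$ nor the intersection-equals-product identity, and these are precisely the two facts that make the clean bound $2\ell$ (and the countable generation) come out for free. Your "fallback" idea of an iterated Koszul-type complex is, in spirit, what the paper does — the tensor product of the $\mathbf{Q}^i$ is exactly such a complex — but the exactness is not proved "by hand with the valuation"; it drops out of K\"unneth plus the two inputs above.

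A secondary caveat about your Osofsky-based alternative: Lemma \ref{j} iii) gives a bound on $\pd$, but it does not by itself produce a resolution by \emph{countably generated} free modules, which is part of what the lemma claims. The tensor-product construction gives that automatically.
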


\begin{proof}  In view
of  Lemma \ref{key}, $R^+/ (x_i^{\infty})R^+$ has a  bounded free
resolution $\textbf{Q}^i$ of countably generated free $R^+$-modules
of length $2$. By using induction on $\ell$, we will show that
$\bigotimes_{i=1}^{\ell} \textbf{Q}^i$  is a  bounded free
resolution of $R^+/\sum_{i=1}^{\ell}(x_i^{\infty})R^+$ consisting of
countably generated free $R^+$-modules of length  at most $2\ell$.
By the induction hypothesis, $\textbf{P}:=\bigotimes_{i=2}^{\ell}
\textbf{Q}^i$ is a  bounded free resolution of
$R^+/\sum_{i=2}^{\ell}(x_i^{\infty})R^+$ consisting of countably
generated free $R^+$-modules of length  at most $2\ell-2$. In light
of \cite[Theorem 11.21]{R}, we see that
$$H_n(\textbf{Q}^1\otimes_{R^+}\textbf{P})\cong \Tor^{R^+}_n(R^+/
\fa_1,R^+/ \sum_{i=2}^{\ell}(x_i^{\infty})R^+).$$ The ideal
$(x_1^{\infty})R^+$ is a directed union of free  $R^+$-modules, and
so it is a flat $R^+$-module. Hence, for each $n>1$ we have
$\Tor^{R^+}_n(R^+/ (x_1^{\infty})R^+,R^+/
\sum_{i=2}^{\ell}(x_i^{\infty})R^+)=0$. Also,
$$\Tor^{R^+}_1(R^+/ (x_1^{\infty})R^+,R^+/
\sum_{i=2}^{\ell}(x_i^{\infty})R^+)
\cong (x_1^{\infty})R^+\cap\sum_{i=2}^{\ell}(x_i^{\infty})R^+/
(x_1^{\infty})R^+\sum_{i=2}^{\ell}(x_i^{\infty})R^+,$$ which is
zero by \cite[Propostion 2.11(3)]{H1}. On the other hand,
$$H_0(\textbf{Q}^1\otimes_{R^+}\textbf{P})\cong
R^+/(x_1^{\infty})R^+\otimes_{R^+}R^+/\sum_{i=2}^{\ell}(x_i^{\infty})R^+\cong
R^+/ \sum_{i=1}^{\ell}(x_i^{\infty})R^+.$$ This implies that
$\bigotimes_{i=1}^{\ell}
\textbf{Q}^i=\textbf{Q}^1\otimes_{R^+}\textbf{P}$  is a  bounded
free resolution of $R^+/\sum_{i=1}^{\ell}(x_i^{\infty})R^+$
consisting of countably generated free $R^+$-modules of length  at
most $2\ell$, which is precisely what we wish to prove.
\end{proof}

We immediately exploit Lemma \ref{gen} to prove some parts of
Theorem 1.1.

\begin{theorem} Let $(R,\fm)$ be a  Noetherian Henselian local domain of residue
prime characteristic $p$.
\begin{enumerate}
\item[i)] The $R^+$-module
$R^+/ \fm_{R^+}$ has a free resolution of countably generated free
$R^+$-modules of length bounded by $2\dim R$.
\item[ii)] If $\dim R=1$, then $\pd_{R^+}(R^+/\fm_{R^+})= 2$.
\end{enumerate}
\end{theorem}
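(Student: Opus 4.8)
The plan is to deduce this theorem directly from the preparatory lemmas, chiefly Lemma~\ref{gen} together with Corollary~\ref{local} and Lemma~\ref{4.2.1}. First I would dispose of the trivial case $\dim R = 0$, where $R$ is a field, $R^+$ is its algebraic closure, and the statement is vacuous. So assume $d := \dim R \geq 1$. The key observation is that $\fm_{R^+}$ is generated by the radical ideal produced from a system of parameters: choose a system of parameters $x_1,\dots,x_d$ for $R$, where in the mixed characteristic case we arrange $x_1 = p$ (possible since $p \in \fm$ is part of a system of parameters in that setting). By Corollary~\ref{local}, the ideal $\fa := \sum_{i=1}^{d}(x_i^\infty)R^+$ (respectively $(p^\infty)R^+ + \sum_{i=2}^d (x_i^\infty)R^+$ in mixed characteristic) is a maximal ideal of $R^+$. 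Since $R$ is Henselian, $R^+$ is quasi-local by a result recalled in Section~2 (and in the one-dimensional case by Lemma~\ref{4.2.1}), so $R^+$ has a \emph{unique} maximal ideal; hence $\fa = \fm_{R^+}$.

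With that identification in hand, part i) is immediate: Lemma~\ref{gen} (applied with $\ell = d$, noting the $x_i$ are nonzero nonunits since they lie in $\fm \subsetneq R$) says exactly that $R^+/\fa = R^+/\fm_{R^+}$ has a bounded free resolution by countably generated free $R^+$-modules of length $2d = 2\dim R$. In particular $\pd_{R^+}(R^+/\fm_{R^+}) \leq 2\dim R$, which is the ``in particular'' clause of Theorem~\ref{main} i).

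For part ii), with $\dim R = 1$ we have $\fm_{R^+} = (x^\infty)R^+$ for a single nonzero nonunit $x$ (with $x = p$ in mixed characteristic), by Lemma~\ref{4.2.1} and Corollary~\ref{local}. There is a short exact sequence $0 \to (x^\infty)R^+ \to R^+ \to R^+/\fm_{R^+} \to 0$, so $\pd_{R^+}(R^+/\fm_{R^+}) = \pd_{R^+}((x^\infty)R^+) + 1$ provided the projective dimension of the ideal is positive (if $(x^\infty)R^+$ were projective, then since $R^+$ is quasi-local it would be free, hence principal over the domain $R^+$, which is impossible as explained in the proof of Lemma~\ref{key}). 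By Lemma~\ref{key} ii) (prime characteristic) or Lemma~\ref{key} i) (characteristic zero, with $x=p$, $R$ Noetherian Henselian local), we have $\pd_{R^+}((x^\infty)R^+) = 1$. Therefore $\pd_{R^+}(R^+/\fm_{R^+}) = 2$.

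The one genuinely delicate point, which I would want to state carefully, is the identification $\fa = \fm_{R^+}$: it rests on $R^+$ being quasi-local, which in turn uses that $R$ is Henselian (Artin's theorem, recalled in the introduction) in the general-dimension case, and uses Lemma~\ref{4.2.1} in dimension one. I should also make sure the mixed characteristic bookkeeping is consistent — that $p$ can be taken as the first element of a system of parameters and that $p$ is a nonunit nonzero element of $R^+$, which is clear since $p \in \fm$ and $R^+$ is a domain containing $R$. Everything else is a direct citation of Lemma~\ref{gen} and Lemma~\ref{key}; no new computation is needed, so there is no real obstacle beyond assembling these pieces in the right order.
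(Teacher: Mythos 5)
Your proof is correct and follows essentially the same route as the paper: part i) is obtained by feeding a system of parameters into Lemma~\ref{gen} and then identifying the resulting maximal ideal with $\fm_{R^+}$ via Corollary~\ref{local} (using the Henselian hypothesis to know $R^+$ is quasi-local), and part ii) is then the $d=1$ case. The only difference is that the paper's proof of ii) is extremely terse (it just cites part i) and Corollary~\ref{local}), leaving the lower bound implicit, whereas you supply the missing step explicitly: the short exact sequence $0 \to (x^\infty)R^+ \to R^+ \to R^+/\fm_{R^+} \to 0$ together with the equality $\pd_{R^+}((x^\infty)R^+) = 1$ from Lemma~\ref{key} forces $\pd_{R^+}(R^+/\fm_{R^+}) = 2$. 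Your more complete treatment of ii) is an improvement over the paper's write-up rather than a different method.
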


\begin{proof} i) Let $d:=\dim R$ and let $\{x_1,\cdots,x_d\}$ be a system of parameters
for $R$. In the mixed characteristic case, we may and do assume in
addition that $x_1=p$. By Lemma \ref {gen},
$R^+/\sum_{i=1}^{d}(x_i^{\infty})R^+$ has a free resolution of
countably generated free $R^+$-modules of length bounded by $2d$. It
remains to recall from Corollary \ref{local} that
$\sum_{i=1}^{d}(x_i^{\infty})R^+=\fm_{R^+}$.

ii) This follows by part i) and Corollary \ref{local}.
\end{proof}

Here, we give some more examples of ideals of $R^+$ of finite
projective dimension.

\begin{example}
\begin{enumerate}
\item[i)] Let $(R,\fm)$ be a Noetherian complete local domain of
prime characteristic. Let $\fa$ be a finitely generated ideal of
$R^+$ with the property that $\Ht \fa\geq \mu(\fa)$, where
$\mu(\fa)$ is the minimal number of elements that needs to generate
$\fa$. We show that $\pd_{R^+}(R^+/ \fa)\leq\mu(\fa)\leq \dim R$.
Indeed, let $\underline{a}:=a_1,\cdots,a_n$ be a minimal generating
set for $\fa$. Keep in mind that $R^{+}$  is a directed union of its
subrings $R'$ which are module-finite extensions of $R$. In view of
Lemma \ref{wel} v), $R'$ is a complete local domain. Let $R'$ be one
of them, which contains  $a_i$ for all $1\leq i \leq n$. Set
$\fb:=(a_1,\cdots, a_n)R'$. Then $\fb R^+=\fa$. Also, we have
$n\leq\Ht \fa\leq\Ht \fb\leq\mu(\fa)\leq n$, because $R^+$ is an
integral extension of  a Noetherian ring $R'$. So $n:= \mu(\fb)=\Ht
\fb$. In view of the equality $(R')^{+}=R^{+}$, we can and do assume
that $a_i\in R$ for all $1\leq i \leq n$. This yields that
$\underline{a}$ is a part of a system of parameter for $R$. By
\cite[Theorem 5.15]{HH1}, $R^+$ is a balanced big Cohen-Macaulay
$R$-algebra. So $\underline{a}$ is a regular sequence on $R^+$.
Therefore, the Koszul complex of $R^+$ with respect to
$\underline{a}$ provides a projective resolution of $R^+/ \fa$  of
length $\mu(\fa)$.

\item[ii)] Let $\mathbb{F}$ be a field of prime characteristic.  Consider the
local ring $R:=\mathbb{F}[[X^2,Y^2,XY]]$. Its maximal ideal is
$\fm:=(X^2,Y^2,XY)R$. We show that $\pd_{R^+}(R^+/ \fm R^+)=2$.
Indeed, consider the complete regular local ring
$A:=\mathbb{F}[[X,Y]]$. One has $A^+=R^+$. By \cite[6.7,
Flatness]{HH1}, $A^+$ is flat over $A$. Therefore, $$\pd_{R^+}(R^+/
\fm R^+)=\pd_{A^+}(A^+/ \fm A^+)\leq\pd_A(A/ \fm A)\leq2.$$

\item[iii)] Let $k$ be the algebraic closure of $\mathbb{Z}/p\mathbb{Z}$ and let
$(R,\fm,k)$ be a 2-dimensional complete normal domain. Then \cite[Remark 4.10]{AH}
says that  every two generated ideal of $R^+$ has finite projective
dimension.
\end{enumerate}
\end{example}

Here, we give a simple application of Lemma \ref{wel} vii).

\begin{remark}
Wodzicki has constructed an example of a ring $A$ such that its
Jacobson radical $\frak r$ is nonzero and satisfies $\Tor^{A}_i (A/
\frak r, A/ \frak r)=0$ for all $i\geq1$. This class of rings are of
interest due to Telescope Conjecture, see \cite[Page 1234]{K}. Now,
assume that $(R,\fm)$ is a one-dimensional local domain of prime
characteristic. We show that $R^+$ belongs to such class of rings.
Indeed, by Lemma \ref{wel} vii), we know that $\Tor^{R^+}_i (R^+/
\fm_{R^+}, R^+/ \fm_{R^+})=0$  for all $i\geq2$ and
$$\Tor^{R^+}_1 (R^+/ \fm_{R^+}, R^+/
\fm_{R^+})=\fm_{R^+}/\fm_{R^+}^2.$$ To conclude, it remains recall
from Lemma \ref{4.2.2} that $\fm_{R^+}=\fm_{R^+}^2$. \end{remark}

\section{Global dimension of $R^{+}$ in one special case}

The aim of this section is to complete the proof of Theorem
\ref{main}. We state our results in more general setting and include
some of their converse parts.

\begin{lemma} \label{alef}Let $(R,\fm)$ be a one-dimensional  Henselian local
domain and has residue prime characteristic.  Then $R^+$ is
$\aleph_{0}$-Noetherian.
\end{lemma}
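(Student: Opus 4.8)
The plan is to realize $R^{+}$ as a valuation domain with countable value group and then read the bound on the number of generators of an ideal straight off the valuation.

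First I would record the structure of $R^{+}$. By Lemma~\ref{4.2.1}, $R^{+}$ is quasi-local; write $\fm_{R^+}$ for its maximal ideal. Fix a nonzero nonunit $x\in R$, taking $x=p$ in the mixed characteristic case. As in the proof of Lemma~\ref{4.2.1} one has $\fm_{R^+}=(x^{\infty})R^{+}$, so by the very definition of $(x^{\infty})R^{+}$ the maximal ideal of $R^{+}$ is generated by the countable set $\{x^{1/p^{n}}:n\ge 0\}$ in the prime characteristic case, and by $\{x^{1/n}:n\ge 1\}$ in the mixed characteristic case; in particular $\fm_{R^+}$ is countably generated. Since $R^{+}$ is integral over $R$ we have $\dim R^{+}=\dim R=1$, and quasi-localness then forces $\Spec R^{+}=\{(0),\fm_{R^+}\}$, so every prime ideal of $R^{+}$ is countably generated.

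Next I would argue that $R^{+}$ is a valuation domain, and finish as follows. Let $v$ be the valuation of Lemma~\ref{wel}~i); once $R^{+}$ is known to be a valuation domain, $v$ is equivalent to its own valuation, so $R^{+}=\{z:v(z)\ge 0\}$ and the value group of $v$ is a subgroup of $\mathbb Q$, in particular countable. Given a nonzero proper ideal $\fa$ of $R^{+}$, put $\epsilon:=\inf\{v(a):0\ne a\in\fa\}\in\mathbb R_{\ge 0}$ and choose $a_{1},a_{2},\dots\in\fa$ with $v(a_{n})$ decreasing to $\epsilon$. For any $0\ne b\in\fa$ there is an $n$ with $v(b)\ge v(a_{n})$, hence $v(b/a_{n})\ge 0$, so $b/a_{n}\in R^{+}$ and $b\in a_{n}R^{+}$. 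Thus $\fa=(a_{n}:n\in\mathbb N)R^{+}$ is countably generated, and $R^{+}$ is $\aleph_{0}$-Noetherian.

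The step that will require real work is the claim that $R^{+}$ is a valuation domain. In the situation of Theorem~\ref{main} (where $R$ is moreover complete) this is clear: by Krull--Akizuki the integral closure $\overline R$ of $R$ in its field of fractions is a DVR, which is complete and hence Henselian, so its discrete valuation extends uniquely to $\overline K$ and $R^{+}=(\overline R)^{+}$ is precisely the valuation ring of that extension. In the stated quasi-local generality one must instead check directly that the quasi-local, one-dimensional, integrally closed domain $R^{+}$ — which in addition contains every root of each of its elements — is a valuation ring: for $z$ in the field of fractions of $R^{+}$ with $z\notin R^{+}$ one shows $z^{-1}\in R^{+}$ by verifying that no valuation overring of $R^{+}$ can give $z$ a positive value, using that $z$ is algebraic over the field of fractions of $R$ and that any such overring has center the height-one ideal $\fm_{R^+}$. (If one prefers to bypass this, the first paragraph already gives that every prime of $R^{+}$ is countably generated, so one may appeal to a Cohen-type principle reducing $\aleph_{0}$-Noetherianness to prime ideals; and whenever the field of fractions of $R$ is such that $R^{+}$ is a countable directed union of module-finite extensions of $R$, one can combine Krull--Akizuki with Lemma~\ref{co}~iii).)
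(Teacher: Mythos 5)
Your first paragraph matches the paper's setup precisely: Lemma~\ref{4.2.1} gives that $R^+$ is quasi-local with $\fm_{R^+}=(x^{\infty})R^+$ countably generated, and $\dim R^+=1$ forces $\Spec R^+=\{(0),\fm_{R^+}\}$, so every prime of $R^+$ is countably generated. The paper then finishes exactly along the lines of your parenthetical fallback: it adapts Cohen's theorem \cite[Theorem~3.4]{M} to conclude that a ring all of whose prime ideals are countably generated is $\aleph_0$-Noetherian, and that is the entire proof. This is the only route that works at the lemma's stated level of generality.

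Your primary route, however, has a genuine gap: it requires that $R^+$ be a valuation domain, and the paper establishes this only in Lemma~\ref{val}~i), under the much stronger hypothesis that $R$ is a Noetherian complete local domain. That proof expresses $R^+$ as a directed union of discrete valuation rings, using completeness through Lemma~\ref{wel}~v) to guarantee the normalizations $\overline{R'}$ of the module-finite extensions $R'$ are DVRs. Under the bare hypotheses of this lemma ($R$ quasi-local, one-dimensional, residue prime characteristic), those intermediate rings need not even be Noetherian, so that argument is unavailable. Your sketched substitute (``no valuation overring of $R^+$ can give $z$ a positive value'') is not a proof: nothing in it rules out two distinct valuation overrings of $R^+$, both centered on $\fm_{R^+}$, on which $z$ and $z^{-1}$ respectively have positive value --- which is precisely the situation when a one-dimensional quasi-local integrally closed domain fails to be a valuation ring. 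You should lead with the Cohen-type argument, for which you already have all the ingredients, rather than relegating it to a closing parenthesis.
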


\begin{proof}  Note that $R^+$ is quasi-local and its maximal
ideal $\fm_{R^+}$ is countably generated. By adopting the proof of
Cohen's Theorem \cite[Theorem 3.4]{M}, one can prove that if every
prime ideal of a ring $A$ can be generated by a countable set, then
$A$ is $\aleph_{0}$-Noetherian. This completes the proof.
\end{proof}

\begin{lemma}\label{int} Let $R$ be a Noetherian domain  and of dimension greater than one. Then
$\bigcap_{0\neq \fp\in\Spec R^+}\fp =0$.
\end{lemma}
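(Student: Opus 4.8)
The plan is to reduce the claim for $R^+$ to the already-established claim for a suitable Noetherian ring, and then invoke Lemma \ref{lc}. First I would recall from Lemma \ref{wel} v) that, since $R$ is complete, $R^+$ is a directed union $\bigcup R'$ of module-finite extensions $R'$ of $R$ that are complete local normal domains; in particular each such $R'$ is a Noetherian complete local domain of the same dimension as $R$, hence $\dim R'>1$. Suppose toward a contradiction that $x$ is a nonzero element of $\bigcap_{0\neq\fp\in\Spec R^+}\fp$. Choose $R'$ in the directed system with $x\in R'$. I then want to show that $x$ lies in every nonzero prime of $R'$ as well; for this, given a nonzero prime $\fq$ of $R'$, I use lying over for the integral extension $R'\subseteq R^+$ to find a prime $\fp$ of $R^+$ with $\fp\cap R'=\fq$, and $\fp\neq 0$ because $\fq\neq 0$, so $x\in\fp$ and hence $x\in\fp\cap R'=\fq$. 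Thus $0\neq x\in\bigcap_{0\neq\fq\in\Spec R'}\fq$.

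Now the obstacle is that Lemma \ref{lc} requires $R'$ to have infinitely many non-associated prime elements, which is a condition on the arithmetic of $R'$ rather than just its dimension. I would handle this as follows. A complete local normal domain of dimension at least two has infinitely many height-one primes (its divisor class data aside, a Noetherian normal domain which is not semilocal of dimension $\le 1$ has infinitely many height-one primes, and here $\dim R'\ge 2$ forces this, e.g. by prime avoidance one cannot cover the maximal ideal by finitely many height-one primes). However, these height-one primes need not be principal, so I cannot directly produce infinitely many non-associated prime elements; instead I would revisit the proof of Lemma \ref{lc} and note that it really only uses height-one primes, via the module $R_x/R\cong H^1_{xR}(R)$ and the fact (\cite[Corollary 2.4]{BSR}) that $H^1_{xR}(R')$ has only finitely many associated primes. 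Concretely: if $0\neq x\in\bigcap_{0\neq\fq}\fq$, then $R'_x$ is the fraction field of $R'$ (localizing kills every nonzero prime), and for each height-one prime $\fq$ of $R'$ I would build a nonzero element of $R'_x/R'$ whose annihilator is $\fq$ — when $\fq$ is principal this is $1/\lambda+R'$ as in Lemma \ref{lc}, and in general one can still realize $\fq$ as an associated prime of $R'_x/R' \cong H^1_{xR}(R')$ since $\fq$ is associated to $R'/(y)$ for a suitable $y\in\fq$ and such primes feed into $H^1$. Having infinitely many height-one primes then contradicts the finiteness of $\Ass_{R'} H^1_{xR}(R')$.

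The cleanest route, which I would actually write, is to strengthen or re-invoke Lemma \ref{lc} in the form: \emph{a Noetherian complete local normal domain of dimension $>1$ has zero intersection of its nonzero primes}, proved exactly by the $\Ass H^1$ argument together with the existence of infinitely many height-one primes, and then apply the lying-over reduction above. I expect the main subtlety to be precisely this passage from "principal height-one primes" in the statement of Lemma \ref{lc} to "height-one primes" in general; everything else (completeness giving the directed union of Noetherian normal $R'$, lying over transporting the hypothetical element $x$ down to each $R'$) is routine. If one wants to avoid touching Lemma \ref{lc}, an alternative is to quote that a complete local normal domain of dimension $\ge 2$ is an $\mathrm{N}$-2 ring with an infinite divisor class computation giving infinitely many non-associated height-one primes that are locally principal, then localize to make them principal — but the direct $H^1$ argument is shorter and self-contained given \cite[Corollary 2.4]{BSR}.
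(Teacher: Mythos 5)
Your proposal is correct and reaches the same contradiction as the paper, but by a different reduction. The paper also begins by locating a Noetherian subring $R'$ of $R^+$ containing $x$, but then pushes further: by Cohen's Structure Theorem it finds a regular local subring $S$ over which $R'$ is module-finite, shows $(S:_{R'}x)\neq 0$ so that some nonzero multiple $rx$ lies in $S$, and then (since $S$ is a UFD, hence has prime elements generating its height-one primes) applies Lemma~\ref{lc} verbatim to $S$ to produce a nonzero prime $\fq\subseteq S$ with $x\notin\fq$; lying-over then lifts $\fq$ to a nonzero prime of $R^+$ missing $x$. You instead skip the Cohen reduction: you transport the condition ``$x$ lies in every nonzero prime'' down to $R'$ directly by lying-over, and then adapt the associated-primes argument behind Lemma~\ref{lc} to the normal but not necessarily factorial ring $R'$. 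This is a genuinely different (and arguably slicker) route, and you correctly flag the one point of divergence: Lemma~\ref{lc} as stated requires \emph{principal} height-one primes. Your sketch of the fix is a bit imprecise — the right move is to take $y=x$: since every height-one prime $\fq$ of $R'$ contains $x$ and hence is minimal over $(x)$, we have $\fq\in\Ass_{R'}(R'/xR')$, and the injection $R'/xR'\hookrightarrow R'_x/R'$, $a+xR'\mapsto a/x+R'$, places $\fq$ in $\Ass_{R'}(R'_x/R')=\Ass_{R'}(H^1_{xR'}(R'))$; the element $a/x+R'$ with annihilator exactly $\fq$ is the one you were looking for. In fact, once you have infinitely many height-one primes in $\Ass_{R'}(R'/xR')$ you are already done without invoking $H^1$ or~\cite{BSR} at all, since $R'/xR'$ is a finitely generated module over the Noetherian ring $R'$ and so has only finitely many associated primes.
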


\begin{proof}   Let $\fa$ be the intersection
of all height one  prime ideals of $R$.  Assume that $\fa\neq0$.
Then having \cite[Theorem 31.2]{M} in mind, we find that $\fa$ has
infinitely many minimal prime ideals, which is impossible as $R$ is
Noetherian. Thus $\fa=0$, and so  $\bigcap_{0\neq \fp\in\Spec
R}\fp=0$. We now suppose that  $\bigcap_{0\neq \fp\in\Spec
R^+}\fp\neq0$ and look for a contradiction. Let $ x$ be a nonzero
element of $\bigcap_{0\neq \fp\in\Spec R^+}\fp$. Keep in mind that
$R^{+}$  is a directed union of its subrings $R'$ which are
module-finite over $R$. Let $R'$ be one of them which contains $x$.
From the equality $(R') ^+=R ^+$, we may and do assume that $x\in
R$.  Thus one can find a nonzero prime ideal $\fq\in \Spec R$ such
that $x$ does not belong to $\fq$. Since the ring extension
$R\longrightarrow R^+$ is integral, by \cite[Theorem 9.3]{M}, there
exists a prime ideal $\fp$ of $R^+$ lying over $\fq$. By the choice
of $x$, we have $x\in\fp$. In particular $x\in \fq$, a
contradiction.
\end{proof}

\begin{lemma} \label{val} Let $(R,\fm)$ be a Noetherian complete local domain.
 \begin{enumerate}
\item[i)] If $\dim R^{+}=1$, then $R^{+}$ is a valuation domain with value group $\mathbb{Q}$.
\item[ii)] If $R^{+}$ is a valuation domain, then $\dim R^{+}\leq1$.
\end{enumerate}
\end{lemma}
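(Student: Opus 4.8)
For part (i), the plan is to start from the valuation map $v\colon R^+\to\mathbb{Q}$ furnished by Lemma \ref{wel} i), which is nonnegative on $R^+$ and positive on $\fm R^+$. This $v$ is, strictly speaking, only a valuation on the fraction field $K^+$ of $R^+$, and what we must show is that the associated valuation ring $V_v=\{y\in K^+: v(y)\ge 0\}$ actually equals $R^+$ (once this is known, $R^+$ is a valuation domain; its value group is $v(K^+)$, which is a subgroup of $\mathbb{Q}$ containing $v(x)$ for a nonzero $x\in\fm$, and since $R^+$ is integrally closed it absorbs all $p$-power — and indeed all $n$-th — roots, forcing the value group to be $n$-divisible for every $n$, hence all of $\mathbb{Q}$). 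The containment $R^+\subseteq V_v$ is immediate from nonnegativity. For the reverse, I would use the hypothesis $\dim R^+=1$ together with Lemma \ref{int}: since $\dim R>1$ would give $\bigcap_{0\ne\fp}\fp=0$, and we can arrange (by the usual reduction, $R^+=(R')^+$ for $R'$ module-finite over $R$, and $\dim R^+=\dim R$) that in fact $\dim R=1$; then $R^+$ has a unique maximal ideal $\fm_{R^+}$ by Lemma \ref{4.2.1}, and every nonzero prime of $R^+$ is that maximal ideal, so $R^+$ is one-dimensional quasi-local and integrally closed. The key point is then: a one-dimensional, integrally closed, quasi-local domain need not a priori be a valuation domain in general, but here $\fm_{R^+}$ is the radical ideal $(x^\infty)R^+$, and by Lemma \ref{4.2.2} it is idempotent. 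Given $y\in K^+\setminus R^+$ with $v(y)\ge 0$: I would show $v(y)=0$ is impossible unless $y\in R^+$ by observing that $y$ is integral over $R^+$ (its minimal polynomial over $K$ has coefficients of value $\ge 0$, hence in $R^+$ by integral closedness of $R^+$ in $K^+$ — careful: need $v$-nonnegativity of each symmetric function, which follows from $v(y)\ge 0$ and $v(y^\sigma)\ge 0$ for conjugates, using that $v$ extends to a valuation with a unique extension property on the algebraic closure), so $y\in R^+$. Thus $V_v=R^+$.

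For part (ii), the plan is a short argument: if $R^+$ is a valuation domain, its prime ideals are linearly ordered, so $\dim R^+$ being $\ge 2$ would produce $0\subsetneq\fp_1\subsetneq\fp_2$ with $\fp_1$ a nonzero, non-maximal prime of $R^+$. But by Lemma \ref{int}, if $\dim R>1$ then $\bigcap_{0\ne\fp\in\Spec R^+}\fp=0$; on the other hand in a valuation domain the nonzero primes are totally ordered and a valuation domain of dimension $\ge 2$ has a smallest nonzero prime only if its value group has a suitable convex subgroup structure — cleaner: I would instead directly use that in a valuation domain $V$, for any $0\ne a\in\fp_1$, the prime $\fp_1$ contains $\sqrt{aV}$, and by Lemma \ref{wel} iii) the ideal $(a^\infty)R^+$ is already radical of height one (as $a\in\fm R^+$ for a suitable associate and $\Ht(a^\infty)R^+=1$ by the valuation estimate). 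Then $(a^\infty)R^+\subseteq\fp_1$, but $(a^\infty)R^+$ is a height-one radical ideal sitting below the non-maximal prime $\fp_1$; combined with $\dim R^+\ge 2$ forcing $\fp_1$ non-maximal, we can localize at $\fp_2$ to get a one-dimensional valuation overring, or simply note $(a^\infty)R^+=\fp_1$ by radicality and height, hence $\fp_1$ is minimal over $(a)$, yet every nonzero prime of the valuation domain $R^+$ contains some such $(a^\infty)R^+$ — and if $\dim R>1$ the intersection of all these over all $a$ is $0$ by Lemma \ref{int}, contradicting that they all contain the fixed nonzero $\fp_1$. If $\dim R\le 1$ then $\dim R^+=\dim R\le 1$ trivially. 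Either way $\dim R^+\le 1$.

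The main obstacle I anticipate is in part (i): rigorously upgrading the \emph{existence} of a nonnegative $\mathbb{Q}$-valued map $v$ on $R^+$ (Lemma \ref{wel} i)) to the statement that $R^+$ \emph{is} the valuation ring of $v$ — i.e. that $v$-nonnegativity characterizes membership in $R^+$ among elements of $K^+$. This requires knowing that $v$ extends to a genuine valuation on $K^+$ and that $R^+$, being integrally closed in its own fraction field $K^+$ (which must itself be verified: $R^+$ is the integral closure of $R$ in $\overline{K}$, hence integrally closed in $\overline{K}\supseteq K^+$), contains every element of $K^+$ with nonnegative value. One must take care with the possibility that $v$ a priori has nontrivial "kernel behaviour" or that $K^+\subsetneq\overline K$; I would handle this by reducing to $\dim R=1$ where $R^+$ is quasi-local with the single prime $\fm_{R^+}=(x^\infty)R^+$, so $R^+_{\fm_{R^+}}=R^+$ is already local and integrally closed with principal-up-to-radical maximal ideal, and invoke the standard fact that a local integrally closed domain whose maximal ideal is the radical of a single element is a valuation domain. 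The rest (identifying the value group as $\mathbb{Q}$) is then routine divisibility bookkeeping using that $R^+$ contains all roots.
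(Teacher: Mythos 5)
Your plan for part~(i) takes a genuinely different route from the paper's. The paper's proof first establishes that $R^+$ is a valuation domain by a purely structural argument: by Lemma~\ref{wel}~v), $R^+$ is a directed union of module-finite extensions $R'$ of $R$ which are complete, local, and \emph{normal}; in dimension one such an $R'$ is a DVR (Serre's criterion / \cite[Theorem 11.2]{M}), and a directed union of valuation domains is a valuation domain. Only then does the paper bring in the $\mathbb{Q}$-valued map $v$, reducing by Cohen's Structure Theorem to $R=V$ a DVR and using the maximality fact \cite[Exercise 10.5]{M} (no ring strictly between $R^+$ and $Q(R^+)$) to conclude $R^+=R^+_v$, whence the value group is all of $\mathbb{Q}$. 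Your plan instead tries to prove $R^+=V_v$ head-on via an integral-closure-of-conjugates argument. That strategy can be made to work, but you must then verify three things you only gesture at: that $v|_K$ has valuation ring exactly $\bar R:=R^+\cap K$ (true, because $\bar R$ is a rank-one valuation ring of $K$ and $v|_K$ is nontrivial and nonnegative on $\bar R$, positive on $\fm_{\bar R}$); that $v$ extends \emph{uniquely} from $K$ to $\overline K$ so that all $K$-conjugates of $y$ have the same value (this needs $R^+$ quasi-local, i.e.\ $R$ Henselian — here supplied by completeness via Artin); and that $R^+$ is integrally closed in $K^+$ (true, by transitivity, since $R^+$ is integrally closed in $\overline K$). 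The paper's directed-union argument sidesteps all of this and also hands you at no extra cost the fact that $R^+$ is a valuation domain before any talk of value groups, which is what makes the value-group computation clean.

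One step you offer as a fallback is questionable and should be removed or replaced: ``a local integrally closed domain whose maximal ideal is the radical of a single element is a valuation domain'' is not a standard fact, and I do not believe it is true as stated — there exist one-dimensional quasi-local (even completely) integrally closed domains that are not valuation domains, and it is not clear the extra hypothesis on the maximal ideal rescues the claim. For part~(ii), you eventually land on the same key input (Lemma~\ref{int}), but your route through $(a^\infty)R^+$ is circuitous and not needed. The paper's argument is tighter: since $R^+$ is integral over the Noetherian ring $R$, $\dim R^+=\dim R<\infty$; in a finite-dimensional valuation domain the nonzero primes are finite in number and linearly ordered, hence have a smallest element, so $\bigcap_{0\neq\fp}\fp\neq 0$; Lemma~\ref{int} then forces $\dim R\leq 1$. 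It would strengthen your write-up to state the finiteness of $\dim R^+$ explicitly, since that is what guarantees the intersection of nonzero primes is nonzero (a general valuation domain of infinite rank need not have a smallest nonzero prime).
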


\begin{proof}  i) Keep in mind that $R^{+}$  is a directed
union of its subrings $R'$ which are module-finite over $R$. Let
$R'$ be one of them. In view of Lemma \ref{wel} v), $R'$ is a
complete local normal domain.  Due to \cite[Theorem 11.2]{M}, we
know that $R'$ is a discrete valuation domain. Note that $R^+=R'^+$.
Thus, $R^{+}$ is a directed union of discrete valuation domains.
Since the directed union of valuation domains is again valuation
domain, $R^{+}$ is a valuation domain. Now, we compute the value
group of $R^+$. By Cohen's Structure Theorem, we can assume that
$(R,\fm)= (V, tV)$ is a discrete valuation domain. By Lemma
\ref{wel} i), there exists a value map as $v: R^+\longrightarrow
\mathbb{Q}\cup\{\infty\}$ such that $v(t)=1$. Let $ Q(R^{+})$ be the
field of fractions of $R^{+}$ and consider $R^{+}_{v}:=\{x\in
Q(R^{+})| v(x)\geq 0\}$.  Clearly, $R^{+}\subseteq
R^{+}_{v}\subsetneqq Q(R^{+})$. In light of \cite[Exercise 10.5]{M},
that there does not exist any ring properly between $R^{+}$ and
$Q(R^{+})$. Thus $R^{+}= R^{+}_{v}$. In particular, the value group
of $R^{+}$ is $\mathbb{Q}$ (note that $v((t^{1/n})^{m})=m/n$ for all
$m/n\in\mathbb{Q}_{\geq0}$).

ii) Without loss of generality we can assume that $\dim R\neq0$. As
the ideals of valuation rings are linearly ordered, we have
$\bigcap_{0\neq \fp\in\Spec R^+}\fp \neq0$. Then by applying Lemma
\ref{int}, we get the claim. \end{proof}

The preparation of Theorem \ref{main} is finished. Now, we proceed
to the proof of it. We repeat Theorem \ref{main} to give its proof.

\begin{theorem}  Let $(R,\fm)$ be a Noetherian
local Henselian domain has residue prime characteristic.
\begin{enumerate}
\item[i)] The $R^+$-module $R^+/ \fm_{R^+}$ has a free resolution of countably generated
free $R^+$-modules of length bounded by $2\dim R$. In particular,
$\pd_{R^+}(R^+/ \fm_{R^+})\leq2\dim R$.
\item[ii)] If  $R$ is one-dimensional  and complete,  then $\gd
R^+=\pd_{R^+}(R^+/ \fm_{R^+})=2$.
\end{enumerate}
\end{theorem}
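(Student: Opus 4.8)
Part i) is essentially already done: it is the content of Theorem 4.8 i), whose proof combines Lemma \ref{gen} (applied to a system of parameters $x_1,\dots,x_d$ of $R$, with $x_1=p$ in the mixed characteristic case) with Corollary \ref{local}, which identifies $\sum_{i=1}^{d}(x_i^{\infty})R^+$ with $\fm_{R^+}$. So for part i) I would simply cite Theorem 4.8 i) and Corollary \ref{local} i) and ii). The only point worth a sentence is that $R$ Henselian forces $R^+$ to be quasi-local, so that $\fm_{R^+}$ is genuinely the unique maximal ideal and $\pd_{R^+}(R^+/\fm_{R^+})\le 2\dim R$ follows at once.

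For part ii), assume now $R$ is one-dimensional and complete. The upper bound $\pd_{R^+}(R^+/\fm_{R^+})\le 2$ is immediate from part i). The lower bound $\pd_{R^+}(R^+/\fm_{R^+})\ge 2$ I would extract from Theorem 4.8 ii) (the $\dim R=1$ case), whose proof rests on Lemma \ref{key} ii) and Corollary \ref{local}: if $x$ is a parameter, then $(x^\infty)R^+=\fm_{R^+}$ has projective dimension exactly $1$, so $R^+/\fm_{R^+}$ has projective dimension exactly $2$ — one checks $\pd=2$ rather than $\le 1$ because if $R^+/\fm_{R^+}$ had $\pd\le 1$ then $\fm_{R^+}$ would be projective hence free hence principal, contradicting Lemma \ref{4.2.2} ($\fm_{R^+}=\fm_{R^+}^2$) together with Nakayama over the quasi-local ring $R^+$. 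Hence $\pd_{R^+}(R^+/\fm_{R^+})=2$.

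It remains to prove $\gd R^+=2$. For this I would invoke the Local Global Principle, Theorem \ref{lg}: I need $R^+$ coherent and $\aleph_n$-Noetherian. By Lemma \ref{alef}, $R^+$ is $\aleph_0$-Noetherian. For coherence, by Lemma \ref{val} i) $R^+$ is a valuation domain (with value group $\mathbb{Q}$), and valuation domains are coherent. Theorem \ref{lg} then gives $\gd R^+<\infty$ (finite global dimension on maximal ideals — there is only one maximal ideal, and $\pd_{R^+}(R^+/\fm_{R^+})=2<\infty$ — implies finite global dimension). To pin down the exact value, use Lemma \ref{j} i): since $\fm_{R^+}$ is countably generated but not finitely generated (it is not principal, by the argument above), $\pd_{R^+}(\fm_{R^+})=\aleph_0$-index $+1=0+1=1$, so by Lemma \ref{g} iii) together with Lemma \ref{j} ii)-type reasoning one gets $\gd R^+=\pd_{R^+}(\fm_{R^+})+1=2$; alternatively, $\gd R^+=\sup\{\pd_{R^+}(R^+/\fa):\fa\trianglelefteq R^+\}$ and for a valuation domain all proper ideals are $\fa_\epsilon$-type ideals countably generated, each of $\pd\le 1$, so each $R^+/\fa$ has $\pd\le 2$, with equality attained at $\fm_{R^+}$.

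**Main obstacle.** The one genuinely delicate point is confirming $\gd R^+=2$ exactly rather than just $\le$ something finite: I must be sure that \emph{every} ideal of $R^+$ is at most countably generated (so that Lemma \ref{j} i) caps $\pd$ of ideals at $1$) — this is where $\aleph_0$-Noetherianness from Lemma \ref{alef} is doing the real work — and that the valuation structure from Lemma \ref{val} i) is available, which is why completeness of $R$ is hypothesized. Everything else is bookkeeping with results already in hand.
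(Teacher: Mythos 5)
Your proposal is correct, and part i) is exactly the paper's route (Theorem 4.6 i) plus Corollary \ref{local}; the paper's section-4 theorem is numbered 4.6, not 4.8, but the reference is unambiguous). For part ii) your overall skeleton also matches the paper --- both use Lemma \ref{val} i) to recognize $R^+$ as a $\mathbb{Q}$-valued valuation domain, Lemma \ref{alef} to see $R^+$ is $\aleph_0$-Noetherian, and Theorem 4.6 ii) for the lower bound $\pd_{R^+}(R^+/\fm_{R^+})=2$ --- but you reach the upper bound $\gd R^+\le 2$ by a slightly different one of Osofsky's lemmas. The paper computes $\wdim R^+=1$ (finitely generated ideals of a valuation domain are principal, hence free) and then applies Lemma \ref{j} iii), which bounds $\pd_A(A/\fa)\le\fd_A(A/\fa)+n+1$ over an $\aleph_n$-Noetherian ring, to get $\pd_{R^+}(R^+/\fa)\le 2$ for every ideal $\fa$. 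You instead apply the valuation-specific Lemma \ref{j} i) directly: every ideal of the $\aleph_0$-Noetherian valuation domain $R^+$ is at most countably generated, hence has projective dimension $\le 1$, hence $\pd_{R^+}(R^+/\fa)\le 2$. Both routes close via Auslander's theorem (Lemma \ref{g} iii)); yours is marginally more self-contained for this particular ring, while the paper's scales to the parallel argument for $R_\infty$ in Lemma \ref{dir}, where no valuation structure is available.

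One small caution on your first-pass argument: invoking the Local Global Principle (Theorem \ref{lg}) only yields $\gd R^+<\infty$, not the sharp bound, so that detour is ultimately discarded --- you correctly recognize this and supply the Osofsky argument instead. Also, the phrase ``all proper ideals are $\fa_\epsilon$-type'' is not quite right and is not needed; what you actually use, correctly, is simply that every ideal is countably generated because $R^+$ is $\aleph_0$-Noetherian. Finally, your justification that $\fm_{R^+}$ is not principal (via $\fm_{R^+}=\fm_{R^+}^2$ from Lemma \ref{4.2.2} and the quasi-local structure) is sound, though strictly speaking it is already packaged inside Theorem 4.6 ii) and need not be reproved.
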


\begin{proof} i) This is in Theorem 4.6.

ii) By Lemma \ref{val} i), we know that $R^+$ is a valuation domain,
and so any finitely generated ideal of $R^+$ is principal. This
along with Lemma \ref{g} i) shows that
$$\wdim R^+=\sup\{\fd(R^+/ \fa):\fa \emph{ is a finitely generated ideal
of }R^+\}=1.$$ Also, Lemma \ref{alef} indicates that $R^+$ is
$\aleph_{0}$-Noetherian. By applying this along with  Lemma \ref{j}
iii), we find that $\gd R^+\leq2$. The proof can now be completed by
an appeal to Theorem 4.6 ii).
\end{proof}

Let $i:S\hookrightarrow R$ be a ring extension. Recall that $i$ is
called a cyclically pure extension if $\fa R \cap S=\fa$ for every
ideal $\fa$ of $S$. We close this section by the following
application of Lemma \ref{val}.

\begin{proposition} Let $(R,\fm)$ be a
one-dimensional Noetherian complete local domain which contains a
field. Then, there exists a subring $R'$ of $R$ such that $
R'\hookrightarrow R$ is not a cyclically pure extension.
\end{proposition}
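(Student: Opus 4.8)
The plan is to exhibit an explicit subring $R'\subseteq R$ together with an ideal $\fa$ of $R'$ for which $\fa R\cap R'\supsetneq\fa$. Since $R$ contains a field it is equicharacteristic, so by Cohen's Structure Theorem $R$ has a coefficient field $K\subseteq R$; that is, $R=K\oplus\fm$ as $K$-modules. As $\dim R=1$ we have $\fm\neq 0$, hence $\fm\neq\fm^{2}$ by Nakayama's lemma. I would then take
\[
R':=K+\fm^{2}.
\]
This is a subring of $R$: it contains $1$, is closed under addition, and $(K+\fm^{2})(K+\fm^{2})\subseteq K+\fm^{2}+\fm^{4}=K+\fm^{2}$ because $K$ stabilizes the ideal $\fm^{2}$ and $\fm^{4}\subseteq\fm^{2}$. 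Note $R'\subsetneq R$ since $\fm^{2}\subsetneq\fm$.

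Next I would pick $0\neq t\in\fm\setminus\fm^{2}$ and set $\fa:=tR'$; its extension to $R$ is $\fa R=tR$. Now $t^{2}=t\cdot t\in tR=\fa R$ and $t^{2}\in\fm^{2}\subseteq R'$, so $t^{2}\in\fa R\cap R'$. However $t^{2}\notin\fa=tR'=tK+t\fm^{2}$: if $t^{2}=t(c+m)$ with $c\in K$ and $m\in\fm^{2}$, then cancelling $t$ (legitimate, as $R$ is a domain) yields $t=c+m\in R'$, so $c=t-m$ lies in $\fm\cap K=0$ (since $t,m\in\fm$), forcing $t=m\in\fm^{2}$ --- contrary to the choice of $t$. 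Hence $\fa R\cap R'\supsetneq\fa$, and therefore $R'\hookrightarrow R$ is not a cyclically pure extension.

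I do not expect a genuinely hard step; the one point demanding care is the \emph{choice} of $R'$. A ``regular'' subring such as $K[[t]]$ is of no use, since $R$ is free as a module over $K[[t]]$ and hence $K[[t]]\hookrightarrow R$ is even (faithfully flat, thus) pure; the failure of cyclic purity must instead come from a badly non-normal subring, and $K+\fm^{2}$ --- the analogue of the cusp $K[[t^{2},t^{3}]]\subseteq K[[t]]$ --- is the simplest one. This is also where Lemma \ref{val} enters conceptually: because $\dim R^{+}=\dim R=1$, that lemma says $R^{+}$ is a valuation domain, and every cyclically pure subring of a valuation domain is again a valuation domain (if $\fa T\subseteq\fb T$ then $\fa=\fa T\cap S\subseteq\fb T\cap S=\fb$, so the ideals of $S$ are linearly ordered). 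The subring $R'$ above is not a valuation domain --- $(t^{2})R'$ and $(t^{3})R'$ are incomparable --- so it can never be a cyclically pure subring of $R^{+}$, nor of $R$ itself in the normal case (where $R\hookrightarrow R^{+}$ is cyclically pure); the explicit computation in the previous paragraph is what makes the conclusion work for an arbitrary, possibly non-normal, $R$.
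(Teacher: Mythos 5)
Your proof is correct and takes a genuinely different, more elementary route than the paper's. The paper first reduces to the regular (DVR) case via Cohen's Structure Theorem, then invokes Zorn's Lemma to produce a \emph{maximal} subring $R'$ avoiding every element $ut$ with $u\in R\setminus\fm$, shows $t^2,t^3\in R'$ by maximality, and finally derives a contradiction by extending the valuation to $R^+$ and using Lemma \ref{val} (that $R^+$ is a one-dimensional valuation domain) --- the proposition is explicitly positioned there as an application of that lemma. You instead write down the explicit subring $R'=K+\fm^2$ (the abstract cusp) and the explicit ideal $\fa=tR'$, and verify in one line that $t^2\in\fa R\cap R'$ yet $t^2\notin\fa$. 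This needs no reduction to the DVR case, no Zorn's Lemma, and no appeal to $R^+$, so it is shorter and more self-contained. What the paper's approach buys is a showcase of the structure of $R^+$; your closing remark --- that a cyclically pure subring of a valuation domain is itself a valuation domain, whereas $K+\fm^2$ has the incomparable ideals $(t^2)R'$ and $(t^3)R'$ --- recovers that conceptual link as a side observation rather than using it as the engine of the argument.
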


\begin{proof} By Cohen's Structure Theorem, there exists a complete
regular local subring $(A,\fm_{A})$ of $R$ such that $R$ is finitely
generated over $A$ and $A$ contains a field $\mathbb{F}$. Any
finitely generated torsion-free module over the principal ideal
domain $A$ is free. Thus $R$ is  free and $R^+$  is flat as modules
over $A$. Assume that the claim holds for regular rings. Then there
is a subring $A'$ of $A$ such that $A'\hookrightarrow A$ is not
cyclically pure. It yields that $A'\hookrightarrow A\hookrightarrow
R$ is not a cyclically pure extension. Then, without loss of
generality we can assume that $A=R$ is regular. By
$w:R\longrightarrow \mathbb{Z}\bigcup\{\infty\}$ we mean the value
map of the discrete valuation ring $R$. Since $R^{+}$ is an integral
extension of $R$, $w$ can be extended to a valuation map on $R^{+}$.
We denote it by $w : R^{+} \longrightarrow \mathbb{Q}\bigcup
\{\infty\}$. Note that $w$ is positive on $\fm_{R^{+}}$. Assume that
$t$ is an uniformising element of $R$, i.e. $\fm=tR$. Let $\sum $ be
the class of all subrings $R'$ of $R$ such that $ut\notin R'$ for
all $u\in R\setminus \fm$. It is clear that $w(u)=0$ for all $u\in
R\setminus \fm$. It turns out that $\mathbb{F}\in\sum$. So
$\sum\neq\emptyset$. We can partially order $\sum$ by means of
inclusion. Thus, $\sum $ is an inductive system. Let
$\{R_{\lambda}\}_{\lambda\in\Lambda}$ be a nonempty totally ordered
subset of $\sum $ and let $R'$ be their union. In view of
definition, $R'\in\sum $. By Zorn's Lemma,  $\sum $ contains a
maximal member $R'$. Next, we show that $R'$ is our proposed ring.
To this end, we show that $t^{2},t^{3}\in R'$. For this, we show
that $R'[t^{2}],R'[t^{3}]\in\sum$. Suppose on the contrary that
$R'[t^{2}]\notin\sum$. So $ut\in R'[t^{2}]$ for some $u\in
R\setminus \fm$. Hence there exists a nonnegative integer $n$ and
$r_{0},\cdots,r_{n}\in R'$ such that
$ut=r_{0}+r_{1}t^{2}+\cdots+r_{n}t^{2n}$. This implies $r_{0}=u't$,
where either $u':=u-r_{1}t-\cdots-r_{n}t^{2n-1}$ in the case
$n\neq0$ or $u':=u$ in the case $n=0$. A contradiction we search is
that $u'\in R\setminus \fm$. Similarly, $t^{3}\in R'$. Note that
$R^{+}$ is  a flat extension of $R$, and so cyclically pure. Now,
consider the ideal $\fa:=\{\alpha \in R^{+}|w(\alpha)\geq 3 \}$. In
view of Lemma  \ref{val} i), either $\fa\subseteq t^2R^{+}$ or
$t^2R^{+}\subseteq \fa$ are necessarily true. The second possibility
not be the case, since $t^2\notin \fa$. Thus, $\fa\subseteq
t^2R^{+}$. If the extension $ R'\hookrightarrow R$, was not the
case, then we should have $ R'\hookrightarrow R^{+}$ is cyclically.
Thus $t^2R^+\cap R'=t^2R'$, and so $\fa\cap R'\subseteq t^2R'$.
Hence $t^3=rt^2$ for some $r\in R'$.  Clearly, $r=t$. This is a
contradiction by $R'\in\sum$.
\end{proof}

\section{Application: Almost zero modules}
Let $(R,\fm)$ be a Noetherian complete local domain. In view of
Lemma \ref{wel} i), there is a  valuation map  $v :
R^+\longrightarrow \mathbb{Q}\bigcup \{\infty\}$  which is
nonnegative on $R^+$ and positive on $\fm_{R^+}$. Fix  such a
valuation map and consider the following definition.

\begin{definition}\label{almost}Let $M$ be an $R^{+}$-module and let $x$ be a nonzero
element of $\fm_{R^{+}}$.\begin{enumerate}
\item[i)] (\cite[Definition \ref{main}]{RSS}) $M$ is called
almost zero with respect to $v$, if for all $m\in M$ and for all
$\epsilon
>0$ there is an element $a\in R^{+}$ with $v(a)<\epsilon$ such that
$am=0$.  The notation $\mathfrak{T}_v$ stands for the class of
almost zero $R^{+}$-modules with respect to $v$.
\item[ii)] $M$ is called almost
zero with respect to $\fm_{R^{+}}$, if $\fm_{R^{+}}M=0$. We use the
notation $\mathcal{GR}$ for the class of such modules, see
\cite[Definition 2.2]{GR}.
\item[iii)] $M$
is called almost zero with respect to $x$, if $x^{1/n}$ kills $M$
for arbitrarily large $n$. $\mathfrak{T}_x$ stands for the class of
almost zero modules with respect to $x$, see \cite[Definition
\ref{main}]{F}.
\end{enumerate}
\end{definition}

Classes of  almost zero modules are (hereditary) torsion theories:

\begin{definition} \label{tordef}Let $A$ be a ring and let
$\mathfrak{T}$ be a subclass of $A$-modules. \begin{enumerate}
\item[i)]The class
$\mathfrak{T}$ is called a Serre class, if it is closed under taking
submodules, quotients and extensions, i.e.  for any exact sequence
of $A$-modules
$$0\longrightarrow M'\longrightarrow M\longrightarrow M''\longrightarrow
0,$$the module $M\in\mathfrak{T}$ if and only if $M'\in\mathfrak{T}$
and $M''\in\mathfrak{T}$.
\item[ii)] A
Serre class which is closed under taking the directed limit of any
directed system of its objects is called a torsion theory.
\end{enumerate}\end{definition}

Let $\mathfrak{T}$ be a torsion theory of $A$-modules. For an
$A$-module $M$, let $\Sigma$ be the family of all submodules of $M$,
that belongs to $\mathfrak{T}$. We can partially order $\Sigma$ by
means of inclusion. This inductive system has an unique maximal
element, call it $t(M)$. The assignment $M$ to $t(M)$ provides a
left exact functor. We denote it by $\mathfrak{T}$. We shall denote
by $\textbf{R}^{i}\mathfrak{T}$, the $i$-th right derived functor of
$\mathfrak{T}$. To make things easier, we first recall some notions.
Consider the Gabriel filtration $\mathfrak{F}:=\{\fa\trianglelefteq
A:A/\fa\in \mathfrak{T}\}$. We define a partial order on
$\mathfrak{F}$ by letting $\fa \leq \fb$, if $\fa\supseteq\fb$ for
$\fa,\fb\in\mathfrak{F}$. Assume that $E$ is an injective
$A$-module. It is clear that $\textbf{R}^{i}\mathfrak{T}(E)$ and
$\underset{\fa\in\mathfrak{F}} {\varinjlim}\Ext^{i}_{A}(A/\fa, E)$
are zero for all $i>0$. Thus, in view of the following natural
isomorphism
$$\textbf{R}^{0}\mathfrak{T}(M)=\{m\in M:\fa m=0,\exists
\fa\in\mathfrak{F}\}\cong\underset{\fa\in\mathfrak{F}}
{\varinjlim}\Hom_{A}(A/\fa, M),$$ one can find that
$\textbf{R}^i\mathfrak{T}(-)\cong\underset{\fa\in\mathfrak{F}}
{\varinjlim}\Ext^{i}_{A}(A/\fa, -)$. Let $\fa$ be an ideal of $A$.
By $\mathfrak{T}_{\fa}$, we mean that $\{M:M_{\fp}=0, \forall
\fp\in\Spec A \setminus\V(\fa)\}$. From the above computations, one
can see that for Grothendieck's local cohomology modules with
respect to $\fa$, there are isomorphisms
$$H^{i}_{\fa}(M)=\textbf{R}^i\mathfrak{T}_{\fa}(M)\cong
\underset{n}{\varinjlim}\Ext^{i}_{A} (A/\fa^{n}, M).$$ An important
numerical invariant  associated to $\mathfrak{T}$ is
$\textit{cd}(\mathfrak{T})$, the cohomological dimension of
$\mathfrak{T}$, which is the largest integer $i$ such that
$\textbf{R}^{i} \mathfrak{T}\neq0$.

\begin{proposition} \label{x}Let  $(R,\fm)$ be a Henselian local domain.
\begin{enumerate}
\item[i)]
If $(R,\fm)$  has residue prime characteristic, then
$\cd(\mathcal{GR})\leq2\dim R$.
\item[ii)]  If $x$ is a nonzero element of $\fm_{R^+}$, then $\cd
(\mathfrak{T}_x)=2$.
\item[iii)]
Let $\mathfrak{F}_v:=\{\fa\trianglelefteq R^+:R^+/\fa\in
\mathfrak{T}_v\}$. Then
$\textbf{R}^i\mathfrak{T}_v(-)\cong\underset{\fa\in\mathfrak{F}_v}
{\varinjlim}H^{i}_{\fa}(-)$. In particular,
$$\cd(\mathfrak{T}_v
)\leq\sup\{\sup\{i\in\mathbb{N}\cup\{0\}: H^{i}_{\fa}(-)\neq0 \}
|\fa\in\mathfrak{F}_v\}.$$
\end{enumerate}
\end{proposition}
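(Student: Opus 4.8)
The plan is to treat the three parts separately, in each case reducing to results already established in the excerpt.

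For part (i): the class $\mathcal{GR}$ consists of all $R^+$-modules $M$ with $\fm_{R^+}M=0$; equivalently, these are the $R^+/\fm_{R^+}$-modules. Thus the associated Gabriel filtration $\mathfrak{F}$ contains the ideal $\fm_{R^+}$ as its smallest element (largest in the order $\leq$ on $\mathfrak F$), and one checks that $\mathfrak{T}_{\fm_{R^+}}$, the torsion functor for this torsion theory, agrees with the local cohomology setup so that $\textbf{R}^i\mathcal{GR}(-)\cong\varinjlim_{\fa\in\mathfrak{F}}\Ext^i_{R^+}(R^+/\fa,-)$, and since $\fm_{R^+}$ is cofinal in $\mathfrak F$ this is just $\varinjlim_n\Ext^i_{R^+}(R^+/\fm_{R^+}^{\,n},-)=H^i_{\fm_{R^+}}(-)$. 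Now by Theorem \ref{main}(i), $R^+/\fm_{R^+}$ has a free resolution of length bounded by $2\dim R$; since $\fm_{R^+}=\fm_{R^+}^2$ by Lemma \ref{4.2.2}, every power $\fm_{R^+}^{\,n}$ equals $\fm_{R^+}$, so $H^i_{\fm_{R^+}}(M)=\varinjlim_n\Ext^i_{R^+}(R^+/\fm_{R^+},M)=\Ext^i_{R^+}(R^+/\fm_{R^+},M)$, which vanishes for $i>2\dim R$ because $\pd_{R^+}(R^+/\fm_{R^+})\leq 2\dim R$. Hence $\cd(\mathcal{GR})\leq 2\dim R$.

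For part (ii): the class $\mathfrak{T}_x$ of modules almost zero with respect to $x$ has Gabriel filtration whose members are precisely the ideals $\fa$ with $R^+/\fa$ killed by $x^{1/n}$ for arbitrarily large $n$; the ideals $(x^{1/n})R^+$ for $n\in\mathbb N$ are cofinal in this filtration (ordered by $\leq$), and $\bigcup_n (x^{1/n})R^+=(x^{\infty})R^+$ is the torsion radical. So $\textbf{R}^i\mathfrak{T}_x(-)\cong\varinjlim_n\Ext^i_{R^+}(R^+/(x^{1/n})R^+,-)$. By Lemma \ref{key}, each $R^+/(x^{1/n})R^+$ — being the cokernel of multiplication by $x^{1/n}$ on $R^+$ composed with... more directly, $R^+/(x^{\infty})R^+$ has $\pd_{R^+}\le 2$ by Lemma \ref{key} (it has a free resolution of length $2$), and a direct-limit argument shows $\textbf{R}^i\mathfrak{T}_x(M)$ vanishes for $i>2$; this gives $\cd(\mathfrak{T}_x)\le 2$. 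For the reverse inequality I would exhibit a module on which $\textbf{R}^2\mathfrak{T}_x$ is nonzero — the natural candidate being $R^+$ itself or the residue field, computing $\textbf{R}^2\mathfrak{T}_x(R^+)$ via the explicit length-two free resolution of $(x^{\infty})R^+$ from Lemma \ref{key}, noting that $\pd_{R^+}((x^{\infty})R^+)=1$ is sharp there, so that $\pd_{R^+}(R^+/(x^{\infty})R^+)=2$ is sharp, forcing $\textbf{R}^2\mathfrak{T}_x\neq 0$. The sharpness (the ``in fact $=1$'' / ``$=2$'' claims in Lemma \ref{key}) is exactly what makes $\cd(\mathfrak{T}_x)=2$ rather than $\leq 2$.

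For part (iii): this is essentially formal. Since $\mathfrak{T}_v$ is a torsion theory with Gabriel filtration $\mathfrak{F}_v=\{\fa\trianglelefteq R^+:R^+/\fa\in\mathfrak{T}_v\}$, the general computation recalled just before the proposition gives $\textbf{R}^i\mathfrak{T}_v(-)\cong\varinjlim_{\fa\in\mathfrak{F}_v}\Ext^i_{R^+}(R^+/\fa,-)$. The point is then to replace $\Ext^i_{R^+}(R^+/\fa,-)$ by $H^i_{\fa}(-)$ inside the colimit: because $\mathfrak{F}_v$ is closed under products of ideals (a Gabriel topology), for each $\fa\in\mathfrak{F}_v$ all powers $\fa^n$ also lie in $\mathfrak{F}_v$ and are cofinal with $\fa$ in the subsystem they generate, so that $\varinjlim_{\fa\in\mathfrak F_v}\Ext^i_{R^+}(R^+/\fa,-)\cong\varinjlim_{\fa\in\mathfrak F_v}\varinjlim_n\Ext^i_{R^+}(R^+/\fa^n,-)=\varinjlim_{\fa\in\mathfrak F_v}H^i_{\fa}(-)$. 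The stated bound on $\cd(\mathfrak{T}_v)$ is then immediate, since $\textbf{R}^i\mathfrak{T}_v=0$ as soon as $H^i_{\fa}(-)=0$ for every $\fa\in\mathfrak{F}_v$.

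The main obstacle I anticipate is in part (ii): establishing the lower bound $\cd(\mathfrak{T}_x)\geq 2$ requires identifying a specific module witnessing $\textbf{R}^2\mathfrak{T}_x\neq 0$ and carrying out the colimit computation of $\Ext^2$ against the explicit resolutions from Lemma \ref{key}, keeping careful track of the transition maps in the direct system indexed by the $(x^{1/n})R^+$. Parts (i) and (iii) are, by contrast, mostly a matter of correctly matching up the torsion-theoretic formalism with the local cohomology machinery and invoking Theorem \ref{main} and Lemma \ref{4.2.2}.
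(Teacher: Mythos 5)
Your treatment of parts (i) and (iii) is correct and follows the paper's route: in (i) the Gabriel filtration $\mathfrak F_{\mathcal{GR}}$ is simply $\{\fm_{R^+},R^+\}$, giving $\mathbf{R}^i\mathcal{GR}\cong\Ext^i_{R^+}(R^+/\fm_{R^+},-)$ and hence $\cd(\mathcal{GR})=\pd_{R^+}(R^+/\fm_{R^+})\leq 2\dim R$ by Theorem \ref{main}; in (iii) the paper gives a direct verification that $\fa\in\mathfrak F_v$ implies $\fa^n\in\mathfrak F_v$ (pick $a\in\fa$ with $v(a)<\epsilon/n$, then $v(a^n)<\epsilon$), whereas you invoke the general fact that Gabriel filters are closed under products — both are fine.

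In part (ii), however, there is a genuine slip in your identification of the cofinal system, and it is worth flagging because it reflects a misreading of the filtration. You claim the ideals $(x^{1/n})R^+$ are cofinal in $\mathfrak F_x$ and write $\mathbf{R}^i\mathfrak{T}_x(-)\cong\varinjlim_n\Ext^i_{R^+}(R^+/(x^{1/n})R^+,-)$. But $R^+/(x^{1/n})R^+$ is not almost zero with respect to $x$, since $x^{1/(n+1)}$ does not kill it; so $(x^{1/n})R^+\notin\mathfrak F_x$ and cannot be cofinal there. The correct observation (the one the paper makes) is that if $x^{1/\ell}M=0$ for arbitrarily large $\ell$ then $x^{1/n}M=0$ for every $n$, so $\mathfrak F_x=\{\fa\trianglelefteq R^+:(x^{\infty})R^+\subseteq\fa\}$ and the singleton $\{(x^{\infty})R^+\}$ is cofinal in the reversed inclusion order. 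Consequently $\mathbf{R}^i\mathfrak{T}_x(-)\cong\Ext^i_{R^+}(R^+/(x^{\infty})R^+,-)$, which gives the exact identity $\cd(\mathfrak{T}_x)=\pd_{R^+}(R^+/(x^{\infty})R^+)$. This identity dissolves what you describe as your main obstacle: there is no need to exhibit a witnessing module or chase transition maps, since $\pd_{R^+}((x^{\infty})R^+)=1$ exactly (Lemma \ref{key}) forces $\pd_{R^+}(R^+/(x^{\infty})R^+)=2$ and hence $\cd(\mathfrak{T}_x)=2$ immediately. Your "more directly" aside does reach the correct upper bound, but you should discard the $(x^{1/n})$-colimit formula entirely and replace it with the single-$\Ext$ identification above.
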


\begin{proof} i) Consider the
$\mathfrak{F}_{\mathcal{GR}}:=\{\fa\trianglelefteq R^+:R^+/
\fa\in\mathcal{GR}\}$. Clearly,
$\mathfrak{F}_{\mathcal{GR}}=\{\fm_{R^+},R^+\}$. It turns out that
$$\textbf{R}^i(\mathcal{GR})(-)\cong
\underset{\fa\in\mathfrak{F}_{\mathcal{GR}}}{\varinjlim}\Ext^{i}_{R^+}(R^+/\fa,
-)\cong \Ext^{i}_{R^+}(R^+/ \fm_{R^+}, -).$$So
$\cd(\mathcal{GR})=\pd_{R^+}({R^+}/ \fm_{R^+})$.  The proof can now
be completed by an appeal to Theorem \ref{main}.

ii) Let $M$ be an almost zero $R^+$-module with respect to $x$. In
view of Definition \ref{almost} iii),  there are sufficiently large
integers $\ell$ such that $x^{1/\ell} M=0$. Assume that $\ell\neq1$.
Then $x^{\frac{1}{\ell-1}}
M=x^{\frac{1}{\ell(\ell-1)}}x^{\frac{1}{\ell}}M=0$. Continuing
inductively, $x^{1/n} M=0$ for all $n\in\mathbb{N}$. Consider the
Gabriel filtration $\mathfrak{F}_x:=\{\fa\trianglelefteq R^+:R^+/
\fa\in\mathfrak{T}_x\}$. Observe that
$\mathfrak{F}_x=\{\fa\trianglelefteq R^+:
(x^{\infty})\subseteq\fa\}$. Recall that the partial order on
$\mathfrak{F}_x$ is defined  by letting $\fa \leq \fb$, if
$\fa\supseteq\fb$ for $\fa,\fb\in\mathfrak{F}_x$. Thus
$\{(x^{\infty})\}$ is cofinal with $\mathfrak{F}_x$. Therefore,
$$\textbf{R}^i\mathfrak{T}_x(-)\cong\underset{\fa\in\mathfrak{F}_x}
{\varinjlim}\Ext^{i}_{R^+}(R^+/\fa, -)\cong
\Ext^{i}_{R^+}(R^+/(x^{\infty}), -),$$ and so $\cd
(\mathfrak{T}_x)=\pd_{R^+}({R^+}/ (x^{\infty}))$. This along with
Lemma \ref{key} completes the proof of ii).

iii) Let $\fa\in\mathfrak{F}_v$ and $n\in\mathbb{N}$. Assume that
$\epsilon >0$ is a rational number.  There is an element $a$ in
$\fa$ with $v(a)<\epsilon/n$. Consequently, $v(a^n)<\epsilon$. Thus,
$\fa ^n$ has elements of small order, i.e. $\fa
^n\in\mathfrak{F}_v$. Therefore,
$$\textbf{R}^i\mathfrak{T}_v(M)\cong\underset{\fa\in\mathfrak{F}_v}{\varinjlim}
\Ext^{i}_{R^+}(R^+/\fa,
M)\cong\underset{\fa\in\mathfrak{F}_v}{\varinjlim}
(\underset{n\in\mathbb{N}}{\varinjlim}\Ext^{i}_{R^+}(R^+/\fa^n,
M))\cong\underset{\fa\in\mathfrak{F}_v}{\varinjlim}H^{i}_{\fa}(M),$$as
claimed.
\end{proof}

Each valuation map  may gives a different class of almost zero
modules over $R^{+}$.   However, in the case $\dim R=1$, one has the
following.

\begin{proposition}\label{izu}
Let $(R,\fm)$ be a one-dimensional Noetherian complete local domain.
If $x\in\fm_{R^+}$ is nonzero, then
$\mathfrak{T}_v=\mathcal{GR}=\mathfrak{T}_x$. In particular,
$\cd(\mathfrak{T}_v)=2$.
\end{proposition}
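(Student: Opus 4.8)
The plan is to show the three torsion classes $\mathfrak{T}_v$, $\mathcal{GR}$ and $\mathfrak{T}_x$ all coincide by exhibiting a common Gabriel filtration. The key structural fact available to us is Lemma \ref{val} i): since $R$ is one-dimensional Noetherian complete local, $R^+$ is a valuation domain with value group $\mathbb{Q}$, hence quasi-local with maximal ideal $\fm_{R^+}$. First I would pin down the Gabriel filtrations. By Lemma \ref{4.2.1} (or Lemma \ref{val}) $R^+$ has a unique maximal ideal $\fm_{R^+}$, and by Lemma \ref{4.2.2} it is idempotent, $\fm_{R^+}=\fm_{R^+}^2$. Combining this with the valuation description, for any nonzero $x\in\fm_{R^+}$ one has $v(x)>0$, and the ideal $(x^{\infty})R^+=(x^{1/p^n}:n)R^+$ (or in characteristic zero $(x^{1/n}:n)R^+$) is exactly $\fa_{v(x)}:=\{r:v(r)\geq v(x)/\text{(no lower bound)}\}$ — more precisely $(x^{\infty})R^+=\{r\in R^+: v(r)>0\}=\fm_{R^+}$ once one checks, using that the value group is $\mathbb{Q}$ and $v$ is surjective onto $\mathbb{Q}_{\geq 0}$ on $R^+$, that there is no positive lower bound on $v$ restricted to $\fm_{R^+}$. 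This already forces $(x^{\infty})R^+=\fm_{R^+}$ for every nonzero $x\in\fm_{R^+}$.

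Next I would translate that into equality of the three classes. For $\mathcal{GR}$: by definition $M\in\mathcal{GR}$ iff $\fm_{R^+}M=0$, i.e. iff $\fm_{R^+}\subseteq(0:_{R^+}m)$ for all $m$. For $\mathfrak{T}_x$: by the argument already run in the proof of Proposition \ref{x} ii), $M\in\mathfrak{T}_x$ iff $x^{1/n}M=0$ for all $n$, i.e. iff $(x^{\infty})R^+$ annihilates $M$; but we just identified $(x^{\infty})R^+=\fm_{R^+}$, so $\mathfrak{T}_x=\mathcal{GR}$. For $\mathfrak{T}_v$: an element $m$ is killed by elements of arbitrarily small positive value iff $(0:_{R^+}m)$ contains elements of value $<\epsilon$ for all $\epsilon>0$; since $(0:_{R^+}m)$ is an ideal of the valuation domain $R^+$ it is a "valuation ideal", and an ideal of $R^+$ containing elements of arbitrarily small positive value must contain every element of $\fm_{R^+}$ (again because $v$ is surjective onto $\mathbb{Q}_{\geq0}$ and ideals are totally ordered, so if it meets every $(0,\epsilon)$ it contains $\fm_{R^+}$). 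Hence $M\in\mathfrak{T}_v$ iff $\fm_{R^+}M=0$, giving $\mathfrak{T}_v=\mathcal{GR}$. Thus all three classes are equal.

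Finally, for the last assertion $\cd(\mathfrak{T}_v)=2$: since $\mathfrak{T}_v=\mathcal{GR}$ we get $\cd(\mathfrak{T}_v)=\cd(\mathcal{GR})$, and as computed in the proof of Proposition \ref{x} i), $\cd(\mathcal{GR})=\pd_{R^+}(R^+/\fm_{R^+})$. By Theorem \ref{main} ii) this equals $2$. (Alternatively, invoke Proposition \ref{x} ii): $\cd(\mathfrak{T}_x)=2$ and $\mathfrak{T}_v=\mathfrak{T}_x$.)

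\textbf{Main obstacle.} The one step that is not purely formal is the identification $(x^{\infty})R^+=\fm_{R^+}$ and, equivalently, the claim that any ideal of $R^+$ meeting every interval $(0,\epsilon)$ in the value group must be all of $\fm_{R^+}$. This uses in an essential way that the value group is exactly $\mathbb{Q}$ (Lemma \ref{val} i)) — so there is no positive infimum — together with the fact that $R^+=R^+_v$ so that $v$ surjects onto $\mathbb{Q}_{\geq0}$ on $R^+$ and the ideals of $R^+$ are precisely the sets $\{r:v(r)\geq\gamma\}$ or $\{r:v(r)>\gamma\}$ for $\gamma$ in $\mathbb{Q}_{\geq0}\cup\{\infty\}$. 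Once that order-theoretic description of the ideals of $R^+$ is in place, everything else is bookkeeping, and I would be careful to handle the characteristic-zero case ($x^{1/n}$) and the prime-characteristic case ($x^{1/p^n}$) uniformly since both occur under the hypotheses.
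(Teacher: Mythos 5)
Your proof is correct and rests on essentially the same idea as the paper's: Lemma \ref{val} i) gives that $R^+$ is a valuation domain with value group $\mathbb{Q}$, and the linearly ordered ideal structure forces any ideal containing elements of arbitrarily small positive value to absorb all of $\fm_{R^+}$. The paper organizes this slightly differently --- it notes the easy inclusions $\mathcal{GR}\subseteq\mathfrak{T}_x\subseteq\mathfrak{T}_v$ and closes the loop by showing $(0:_{R^+}m)=\fm_{R^+}$ via the dichotomy $(0:m)\subseteq xR^+$ or $xR^+\subseteq(0:m)$, whereas you directly identify $(x^{\infty})R^+=\fm_{R^+}$ and characterize all three classes as ``$\fm_{R^+}$-annihilation''; but this is a repackaging rather than a different route, and both then finish with $\cd(\mathcal{GR})=\pd_{R^+}(R^+/\fm_{R^+})=2$ from Theorem \ref{main}.
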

\begin{proof} Clearly,
$\mathcal{GR}\subseteq\mathfrak{T}_x\subseteq\mathfrak{T}_v$. Let
$M\in\mathfrak{T}_v$ be nonzero and let $0\neq m\in M$. We show that
$\fm_{R^+}=(0:_{R^{+}}m)$. This is trivial that
$(0:_{R^{+}}m)\subseteq \fm_{R^+}$. Assume that $x\in\fm_{R^+}$and
observe that $R^{+}/(0:_{R^{+}}m)\cong R^{+}m\in\mathfrak{T}_v$. By
Lemma \ref{val} i), either $(0:_{R^{+}}m)\subseteq xR^+$ or
$xR^+\subseteq(0:_{R^{+}}m)$. If $(0:_{R^{+}}m)\subseteq xR^+$ was
the case, then by the natural epimorphism
$R^+/(0:_{R^{+}}m)\longrightarrow R^+/xR^+$, we should have
$R^+/xR^+\in\mathfrak{T}_v$. This implies that $xR^+$ contains
elements of sufficiently small order, contradicting the fact that
$v(rx)=v(r)+v(x)\geq v(x)$ for all $r\in R^+$. So,
$xR^+\subseteq(0:_{R^{+}}m)$. Thus,
$\fm_{R^+}\subseteq(0:_{R^{+}}m)$. Hence $\fm_{R^+}M=0$, and so
$M\in\mathcal{GR}$.
\end{proof}

Let $A$ be a ring and $\mathfrak{T}$ a torsion theory of
$A$-modules. Recall that any $A$-module $M$ has a largest submodule
$t(M)$ which belongs to $\mathfrak{T}$. The $A$-module $M$ is called
torsion-free with respect to $\mathfrak{T}$, if $t(M)=0$. The
collection of all torsion-free modules with respect to
$\mathfrak{T}$ is denoted by $\mathcal{F}$. By
$\mathfrak{T}-\depth_{A}(M)\geq n$    we mean that there is an
injective resolution of $M$ such that whose first $n$-terms are
torsion-free with respect to $\mathfrak{T}$, see \cite[Definition
\ref{main}]{Ca}.  In light of \cite[Proposition 1.5]{Ca}, we see
that
$$\mathfrak{T}-\depth_{A}
(M)=\inf\{i\in\mathbb{N}\cup\{0\}:\textbf{R}^i\mathfrak{T}
(M)\neq0\}.$$Here $\inf$ is formed in $\mathbb{Z} \cup \{\infty\}$
with the convention that $\inf \emptyset=\infty$.

\begin{proposition}Let $(R,\fm)$ be a Noetherian Henselian local domain
and let $M$ be an $R^+$-module such that $\Ext^i_{R^+}(R^+/
\fm_{R^+},M)\neq 0$ for some $i$.
\begin{enumerate}
\item[i)] $\mathfrak{T}_v-\depth_{R^+} (M)\in\{0,1,2\}$.
\item[ii)] Let $\mathfrak{T}$ be any nonzero torsion theory of
$R^+$-modules. If either $R$ has prime characteristic or has mixed
characteristic, then $\mathfrak{T}-\depth_{R^+} (M)\leq 2\dim R$.
\end{enumerate}
\end{proposition}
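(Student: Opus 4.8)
The plan is to reduce both parts to the cohomological dimension computations already established in Proposition \ref{x} and Theorem \ref{main}, together with the general identity $\mathfrak{T}-\depth_{A}(M)=\inf\{i:\textbf{R}^i\mathfrak{T}(M)\neq0\}$ recalled just above. The hypothesis that $\Ext^i_{R^+}(R^+/\fm_{R^+},M)\neq0$ for some $i$ is what guarantees that the relevant derived functors do not vanish identically, so that the infimum defining $\mathfrak{T}-\depth$ is finite; the upper bounds will come from the cohomological dimension being finite.

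For part i), first I would observe that $R$ is one-dimensional in the situation where $\mathfrak{T}_v$ is the natural object to consider here — but more carefully, I should argue directly: by Proposition \ref{x} iii), $\textbf{R}^i\mathfrak{T}_v(-)\cong\varinjlim_{\fa\in\mathfrak{F}_v}H^i_{\fa}(-)$, and $\cd(\mathfrak{T}_v)\leq 2$ will follow once we know the relevant $H^i_{\fa}$ vanish for $i>2$. Indeed each $\fa\in\mathfrak{F}_v$ contains an ideal of the form $(x^{\infty})R^+$ (an element of small order generates such an ideal up to radical, and $\mathfrak{F}_v$ is closed under passing to larger ideals), so the direct limit is computed over a system cofinal with ideals $\sum_{i=1}^{\ell}(x_i^{\infty})R^+$; by Lemma \ref{wel} vii) and Lemma \ref{gen} the residue rings of such ideals have flat dimension at most $\ell$ and projective dimension at most $2\ell$, but for the cohomological dimension of $\mathfrak{T}_v$ one uses that a single element of small order already lies in $\fa$, forcing $\cd(\mathfrak{T}_v)\le\pd_{R^+}(R^+/(x^\infty)R^+)=2$ by Lemma \ref{key}, exactly as in the proof of Proposition \ref{x} ii). Hence $\textbf{R}^i\mathfrak{T}_v(M)=0$ for $i\geq 3$, so $\mathfrak{T}_v-\depth_{R^+}(M)\leq 2$; since it is a nonnegative integer (the $\Ext$-nonvanishing hypothesis rules out $\infty$ once one checks $R^+/\fm_{R^+}$ is $\mathfrak{T}_v$-torsion, which it is), we get $\mathfrak{T}_v-\depth_{R^+}(M)\in\{0,1,2\}$.

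For part ii), let $\mathfrak{T}$ be an arbitrary nonzero torsion theory of $R^+$-modules. The key point is that $\mathfrak{F}:=\{\fa:R^+/\fa\in\mathfrak{T}\}$ is a nonempty Gabriel filtration, and every $\fa\in\mathfrak{F}$ contains some $\fm_{R^+}$-primary-flavoured ideal — more precisely, since $\mathfrak{T}$ is nonzero there is a proper $\fa\in\mathfrak{F}$, hence (as $R^+$ has a unique maximal ideal $\fm_{R^+}$ in the relevant cases, by Artin's theorem / Lemma \ref{4.2.1}) $\fa\subseteq\fm_{R^+}$, and $\mathfrak{F}$ contains $\fm_{R^+}$ itself by closure under larger ideals. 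Therefore $\{\fm_{R^+}\}$ is cofinal in $(\mathfrak{F},\leq)$, and $\textbf{R}^i\mathfrak{T}(-)\cong\varinjlim_{\fa\in\mathfrak{F}}\Ext^i_{R^+}(R^+/\fa,-)\cong\Ext^i_{R^+}(R^+/\fm_{R^+},-)$, so $\cd(\mathfrak{T})=\pd_{R^+}(R^+/\fm_{R^+})\leq 2\dim R$ by Theorem \ref{main} i) (using Corollary \ref{local} to realize $\fm_{R^+}$ as $\sum(x_i^\infty)R^+$, in both the prime and mixed characteristic cases). Consequently $\textbf{R}^i\mathfrak{T}(M)=0$ for $i>2\dim R$, and the $\Ext$-nonvanishing hypothesis guarantees $\textbf{R}^i\mathfrak{T}(M)\neq 0$ for some $i\leq 2\dim R$ (since $R^+/\fm_{R^+}\in\mathfrak{T}$ forces that $\Ext$ group to feed into the limit nontrivially); thus $\mathfrak{T}-\depth_{R^+}(M)\leq 2\dim R$.

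The main obstacle I anticipate is the cofinality claim: one must verify carefully that for \emph{every} nonzero torsion theory $\mathfrak{T}$ the filtration $\mathfrak{F}$ actually contains $\fm_{R^+}$, which rests on $R^+$ being quasi-local (valid when $R$ is Henselian, or in mixed/prime characteristic after passing to the Henselization, via Lemma \ref{4.2.1} and Artin's result) and on $\mathfrak{F}$ being upward closed among ideals — so that once some proper ideal lies in $\mathfrak{F}$, so does the unique maximal ideal containing it. A secondary subtlety is ensuring the lower bound, i.e. that the hypothesis $\Ext^i_{R^+}(R^+/\fm_{R^+},M)\neq0$ genuinely forces $\mathfrak{T}-\depth_{R^+}(M)<\infty$; this follows because $R^+/\fm_{R^+}$ is a simple $R^+$-module killed by $\fm_{R^+}\in\mathfrak{F}$, hence lies in $\mathfrak{T}$, so the nonvanishing $\Ext$ is precisely a nonvanishing term of the direct system computing $\textbf{R}^i\mathfrak{T}(M)$.
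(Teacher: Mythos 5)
Your proposal has a genuine gap, and it stems from trying to bound the \emph{cohomological dimension} of $\mathfrak{T}_v$ (resp.\ of an arbitrary nonzero $\mathfrak{T}$) rather than the \emph{depth}. In part i) you assert $\cd(\mathfrak{T}_v)\leq 2$; in part ii) you assert that $\{\fm_{R^+}\}$ is cofinal in the Gabriel filtration $\mathfrak{F}$ and hence $\cd(\mathfrak{T})=\pd_{R^+}(R^+/\fm_{R^+})$. Both cofinality claims are wrong. Recall the partial order on $\mathfrak{F}$ is $\fa\leq\fb$ iff $\fa\supseteq\fb$, so the direct limit $\textbf{R}^i\mathfrak{T}(-)\cong\varinjlim_{\fa\in\mathfrak{F}}\Ext^i_{R^+}(R^+/\fa,-)$ runs over ever \emph{smaller} ideals in $\mathfrak{F}$. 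Saying $\{\fm_{R^+}\}$ is cofinal would require every $\fa\in\mathfrak{F}$ to contain $\fm_{R^+}$, i.e.\ $\mathfrak{F}=\{\fm_{R^+},R^+\}$ — true only when $\mathfrak{T}=\mathcal{GR}$. For $\mathfrak{T}_v$ (or $\mathfrak{T}_x$ with $\dim R\geq 2$, or a generic $\mathfrak{T}$) the Gabriel filtration contains many ideals strictly smaller than $\fm_{R^+}$, e.g.\ the ideals $\fa_\epsilon$ for all $\epsilon>0$, so the limit cannot be truncated this way. In fact the boundedness of $\cd(\mathfrak{T}_v)$ when $\dim R\geq 2$ is explicitly posed as an open problem (Question 9.3 of the paper), so the claim $\cd(\mathfrak{T}_v)\leq 2$ cannot be what the proof rests on. Your reasoning also collapses to a single-generator cofinality mid-argument in part i) (first claiming cofinality with $\sum(x_i^\infty)$, then with a single $(x^\infty)$), which compounds the confusion.

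The correct route, which the paper uses, sidesteps cohomological dimension of $\mathfrak{T}_v$ entirely by exploiting anti-monotonicity of $\mathfrak{T}$-depth under inclusion of torsion theories. Since $\mathcal{GR}\subseteq\mathfrak{T}_x\subseteq\mathfrak{T}_v$, one gets $\mathfrak{T}_v\text{-}\depth_{R^+}(M)\leq\mathfrak{T}_x\text{-}\depth_{R^+}(M)\leq\mathcal{GR}\text{-}\depth_{R^+}(M)$. The Ext hypothesis says exactly that $\mathcal{GR}\text{-}\depth_{R^+}(M)<\infty$, hence also $\mathfrak{T}_x\text{-}\depth_{R^+}(M)<\infty$, and since $\cd(\mathfrak{T}_x)=2$ by Proposition 6.4 ii), a finite $\mathfrak{T}_x$-depth is bounded by $2$. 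That proves i). For ii), the paper observes that any nonzero torsion theory $\mathfrak{T}$ must contain $R^+/\fm_{R^+}$ (your argument for this part is fine), so $\mathcal{GR}$ is the minimal nonzero torsion theory, and anti-monotonicity again gives $\mathfrak{T}\text{-}\depth_{R^+}(M)\leq\mathcal{GR}\text{-}\depth_{R^+}(M)\leq\cd(\mathcal{GR})=\pd_{R^+}(R^+/\fm_{R^+})\leq 2\dim R$. The lesson is that depth-type invariants are controlled by the \emph{smallest} torsion theory in the chain, not by cofinality claims on the Gabriel filtration of the larger one.
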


\begin{proof}i) Recall  that $\mathcal{GR}\subseteq\mathfrak{T}_x\subseteq \mathfrak{T}_v$.
This yields that
$$\mathfrak{T}_v-\depth_{R^+} (M)\leq\mathfrak{T}_x-\depth_{R^+}
(M)\leq\mathcal{GR}-\depth_{R^+} (M).$$ By Proposition \ref{x} ii),
it is enough for us to prove that $\mathcal{GR}-\depth_{R^+}
(M)<\infty$. But this holds, because $\Ext^i_{R^+}(R^+/
\fm_{R^+},M)\neq 0$ for some $i$.

ii) Let $L\in\mathfrak{T}$ be a nonzero $R^+$-module and let $\ell$
be a nonzero element of $L$. So $R^{+}\ell\in\mathfrak{T}$. From the
natural epimorphism $R^{+}\ell\cong
R^{+}/(0:_{R^{+}}\ell)\longrightarrow R^{+}/\fm_{R^{+}},$ we get
that $R^{+}/\fm_{R^{+}}\in\mathfrak{T}$. It turns out that
$\mathcal{GR}$ is minimal with respect to inclusion, among all
nonzero torsion theories. At this point, ii) becomes clear from the
proof of i). \end{proof}

\begin{lemma} \label{6.6.1}
Let $(R,\fm)$ be a Noetherian local domain. If $0\neq\fp\in\Spec
R^{+}$, then $R^{+} / \fp \in \mathfrak{T}_v$.
\end{lemma}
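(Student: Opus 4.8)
The plan is to show that for any nonzero prime ideal $\fp$ of $R^+$, the quotient $R^+/\fp$ is almost zero with respect to $v$, i.e.\ for every $m\in R^+/\fp$ and every $\epsilon>0$ there is $a\in R^+$ with $v(a)<\epsilon$ and $am=0$. Since $m$ is the image of some $y\in R^+$, it suffices to produce, for each $\epsilon>0$, an element $a\in R^+$ with $v(a)<\epsilon$ and $ay\in\fp$. If $y\in\fp$ there is nothing to do (take $a=1$), so assume $y\notin\fp$; then it is even enough to find $a\in\fp$ with $v(a)<\epsilon$, because then $ay\in\fp$. Thus the whole statement reduces to the claim that \emph{every nonzero prime ideal $\fp$ of $R^+$ contains elements of arbitrarily small positive $v$-value}.

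To prove this, pick any nonzero $c\in\fp$. By Lemma \ref{wel} i), $v(c)>0$ is a rational number, say $v(c)=q$. Now I would use that $R^+$ is closed under taking roots: in characteristic zero the polynomial $X^n-c$ has a root $c^{1/n}\in R^+$ for every $n$, and in characteristic $p$ the same holds for $X^{p^n}-c$; in either case $R^+$ contains an element $z$ with $v(z)=q/n$ (resp.\ $q/p^n$), which can be made smaller than $\epsilon$ by choosing $n$ large. The key point is then that $z\in\fp$: since $\fp$ is prime and $z^n=c\in\fp$ (resp.\ $z^{p^n}=c\in\fp$), we get $z\in\fp$. This is exactly the root-extraction argument already used in the proof of Lemma \ref{4.2.2}. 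Hence $\fp$ contains an element $z$ with $0<v(z)<\epsilon$, and taking $a:=z$ finishes the argument: $ay\in\fp$, so $am=0$ in $R^+/\fp$, and $v(a)<\epsilon$.

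The only mild subtlety is that $v$ takes values in $\mathbb{Q}\cup\{\infty\}$ and is only asserted to be nonnegative on $R^+$ and positive on $\fm R^+$; one needs $v(c)>0$ for the chosen $c\in\fp$, not merely $v(c)\geq0$. This is fine: an element of $R^+$ with $v$-value $0$ is a unit in $R^+$ (its inverse lies in $R^+_v=R^+$ by the valuation-ring description, or more elementarily because $\fp$ proper forces $c$ nonunit hence $v(c)>0$ once we know $v$ detects units), and a prime ideal contains no units; so any nonzero $c\in\fp$ automatically has $v(c)>0$. With that observed, there is no real obstacle — the lemma is a direct consequence of the rationality of $v$ (Lemma \ref{wel} i)) together with the root-closure of $R^+$ and primeness of $\fp$. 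I would write it in three or four lines along exactly these lines.
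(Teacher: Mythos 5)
Your proof follows exactly the paper's approach: extract $n$-th roots $\zeta_n$ of a suitable element of $\fp$, note $\zeta_n\in\fp$ by primeness and $v(\zeta_n)=v(x)/n\to 0$, so $\fp$ has elements of arbitrarily small positive order and $R^+/\fp\in\mathfrak{T}_v$. Two small remarks. First, the split by characteristic is unnecessary: $R^+$ contains a root of $X^n-c$ for every $c\in R^+$ and every $n\geq 1$, since such a root is integral over $R$ and lies in the algebraic closure of the fraction field, regardless of characteristic. Second, your parenthetical justification that $v(c)>0$ for nonzero $c\in\fp$ is shaky: the identity $R^+_v=R^+$ is false once $\dim R\geq 2$ (Lemma \ref{val} ii) says $R^+$ a valuation ring forces $\dim R^+\leq 1$), and the ``more elementary'' alternative (``once we know $v$ detects units'') is circular, since $v$ detecting units is precisely what needs proving. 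The paper itself just asserts ``$v$ is positive on $\fp$'' without justification, so you match it in both idea and level of informality; but a clean fix is available and cheap: for nonzero $c\in\fp$, a minimal monic equation for $c$ over $R$ gives a nonzero element of $\fp\cap R\subseteq\fm\subseteq\fm R^+$, on which $v$ is positive by Lemma \ref{wel} i); now run the root-extraction on that element instead of on an arbitrary one.
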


\begin{proof}
This is proved in \cite{AT2}. But for the sake of completion, we
present  its short proof here. Let $x\in \fp$. For any positive
integer $n$ set $f_n(X):=X^n- x\in R^+[X]$. Let $\zeta_n$ be a root
of $f_n$ in $R^+$. It follows that $\zeta_n\in \fp$, since
$(\zeta_n)^n=x\in\fp$. Keep in mind that $v$ is positive on $\fp$.
The equality $v(\zeta_n)=v(x)/n$ indicates that $\fp$ has elements
of small order. Therefore, $R^{+} / \fp \in \mathfrak{T}_v$.
\end{proof}

The notation $\mathcal{F}_v$ stands for the class of torsion-free
$R^+$-modules with respect to $\mathfrak{T}_v$. Let $\epsilon$ be a
real number. Recall from Lemma \ref{wel} ii) that
$\fa_{\epsilon}:=\{x\in R^{+}| v(x)> \epsilon\}$ is an ideal of
$R^+$.

\begin{lemma}
\label{6.6.2}  Let $(R,\fm)$ be a Noetherian local domain.
\begin{enumerate}
\item[i)] Let $\epsilon$ be a positive nonrational real number. Then $R^{+}/
\fa_{\epsilon}\in\mathcal{F}_v$.
\item[ii)] Assume that  $R$ is  complete and has prime
characteristic. If $x$ is a nonzero element of $R^+$, then
$R^{+}/x{R^+}\in\mathcal{F}_v$.
\end{enumerate}
\end{lemma}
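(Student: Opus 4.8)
The plan is to prove both parts by showing that any nonzero element of the relevant quotient module fails the "small order" test, i.e. its annihilator cannot contain elements of arbitrarily small valuation. Recall that $M \in \mathcal{F}_v$ means $t(M) = 0$, which amounts to saying that for every $0 \neq m \in M$ there exists some $\epsilon > 0$ such that no $a \in R^+$ with $v(a) < \epsilon$ kills $m$; equivalently, $(0 :_{R^+} m)$ contains no elements of small positive valuation.

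For part i), I would take $\bar{y} = y + \fa_\epsilon$ a nonzero element of $R^+/\fa_\epsilon$, so $y \in R^+$ with $v(y) \leq \epsilon$. Suppose $a \in R^+$ satisfies $a\bar{y} = 0$, i.e. $ay \in \fa_\epsilon$, so $v(a) + v(y) = v(ay) > \epsilon$. Since $v(y) \leq \epsilon$ this forces $v(a) > \epsilon - v(y) \geq 0$. The key point is that $v(y)$ is \emph{strictly} less than $\epsilon$: because $\epsilon$ is \emph{nonrational} and $v$ is $\mathbb{Q}$-valued (Lemma \ref{wel} i)), we cannot have $v(y) = \epsilon$, so indeed $v(y) < \epsilon$ and $\delta := \epsilon - v(y) > 0$ is a fixed positive bound with $v(a) > \delta$ for every annihilator $a$. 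Hence $\bar y \notin t(R^+/\fa_\epsilon)$, and since $\bar y$ was an arbitrary nonzero element, $R^+/\fa_\epsilon \in \mathcal{F}_v$.

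For part ii), I would argue by contrast with the radical-ideal behaviour. Take $\bar y = y + xR^+$ nonzero, so $y \notin xR^+$. Suppose $a \in R^+$ with $a\bar y = 0$, i.e. $ay \in xR^+$; I want to bound $v(a)$ away from $0$. The natural attempt is to use that $R$ has prime characteristic $p$: if $v(a)$ could be made arbitrarily small, one would like to factor out a principal piece and land inside $xR^+$ contradicting $y \notin xR^+$. More precisely, I would invoke the structure of $(x^\infty)R^+$: by Lemma \ref{wel} iii), $(x^\infty)R^+$ is radical, hence by Lemma \ref{4.2.2} idempotent, $(x^\infty)R^+ = ((x^\infty)R^+)^2$; and the annihilator of $\bar y$ in the completion-type setup, if it had elements of all small orders, would be forced to lie in $(x^\infty)R^+$, which (being the contraction relevant to the valuation) would then push $y$ into $xR^+$ after a Nakayama-type argument as in the proof of Lemma \ref{key} ii). I expect the cleanest route is: reduce to a suitable localization $S^{-1}R^+$ that is quasi-local with value group $\mathbb{Q}$ (Lemma \ref{4.2.1}, or Lemma \ref{val} in the complete case), observe there that $\bigcap_n x^{1/p^n} S^{-1}R^+$ would contain $y$ if annihilators of $\bar y$ had arbitrarily small order, and derive a contradiction with $y \notin xR^+$ via $v(x^{1/p^n}) = v(x)/p^n \to 0$.

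The main obstacle is part ii): making precise the implication "$(0:_{R^+} m)$ has elements of arbitrarily small valuation $\Rightarrow$ $m \in xR^+$." This is exactly where the prime characteristic hypothesis and the completeness of $R$ must be used — in characteristic zero the analogous statement can fail — and the delicate point is that $(x^\infty)R^+ \neq xR^+$ in general, so one cannot simply say "$m$ is almost divisible by $x$ hence divisible by $x$." I anticipate needing to exploit that, after passing to $S = R \setminus \fp$ for a height-one prime $\fp$ containing $x$, the ring $S^{-1}R^+$ is a valuation domain whose ideals are totally ordered, so either $(0 :_{S^{-1}R^+} m) \subseteq xS^{-1}R^+$ or $xS^{-1}R^+ \subseteq (0:_{S^{-1}R^+}m)$; the first possibility contradicts $y \notin xR^+$ directly, and the second, combined with the small-order hypothesis and $v(x) > 0$, is impossible since $v$ is positive and bounded below on $xS^{-1}R^+$. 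This mirrors the argument in Proposition \ref{izu}, and I would model the write-up on that proof.
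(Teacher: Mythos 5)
Your argument for part i) is correct and matches the paper: since $v$ is $\mathbb{Q}$-valued and $\epsilon$ is nonrational, a nonzero class $y+\fa_\epsilon$ has $v(y)<\epsilon$ strictly, so any annihilator $a$ satisfies $v(a)>\epsilon-v(y)>0$, a fixed positive bound.

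Part ii) has a genuine gap, and you yourself flag the right pressure point but the route you sketch does not close it. The plan is to localize at $S=R\setminus\fp$ for a height-one prime $\fp\ni x$ and use that $S^{-1}R^+$ is ``a valuation domain whose ideals are totally ordered.'' But $S^{-1}R^+\cong(R_\fp)^+$ by Lemma \ref{wel} vi), and $R_\fp$ is a one-dimensional \emph{non-complete} local domain; Lemma \ref{val} i), which produces the valuation-domain structure with value group $\mathbb{Q}$, requires completeness (its proof writes $R^+$ as a directed union of complete local normal module-finite extensions, which are DVRs --- that union structure is lost after localization). Lemma \ref{4.2.1} only yields that $(R_\fp)^+$ is quasi-local, which is far weaker than being a valuation ring, so the total ordering of ideals is not available. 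Even setting this aside, the dichotomy argument would at best place $y$ in $xS^{-1}R^+$, and that does not imply $y\in xR^+$: the contraction $xS^{-1}R^+\cap R^+$ is generally larger than $xR^+$, so the conclusion does not descend. Finally, the intersection $\bigcap_n x^{1/p^n}S^{-1}R^+$ you mention is not visibly related to the annihilator of $\bar y$; the implication ``$(0:\bar y)$ has elements of arbitrarily small order $\Rightarrow$ $y$ lies in that intersection'' is not justified.

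The paper handles part ii) by a different mechanism entirely, and it is worth seeing why. After reducing to $x,y\in R$ with $R$ complete local and \emph{normal} (via the directed-union description in Lemma \ref{wel} v)), the hypothesis that annihilators of $\bar y$ have arbitrarily small order is exactly the hypothesis of Hochster--Huneke's theorem (\cite[Theorem 3.1]{HH2}) characterizing tight closure through elements of small order in $R^+$: it gives $y\in(xR)^\ast$. Then $(xR)^\ast=\overline{xR}$ because tight closure of a principal ideal equals its integral closure (\cite[Corollary 10.2.7]{BH}), and $\overline{xR}=xR$ because $R$ is normal (\cite[Proposition 10.2.3]{BH}). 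Hence $y\in xR\subseteq xR^+$ and $\bar y=0$. This is precisely the bridge you were looking for between ``almost divisible by $x$'' and ``divisible by $x$,'' and it goes through tight closure rather than through any valuation-domain structure on a localization of $R^+$; that is what makes the prime-characteristic and completeness hypotheses do their work.
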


\begin{proof} i) If $R^{+}/ \fa_{\epsilon}\notin\mathcal{F}_v$, then there exists a
nonzero element $x+\fa_{\epsilon}$ 'say in the torsion part of
$R^{+} / \fa_{\epsilon}$ with respect to $\mathfrak{T}_v$. It
follows that $v(x)\leq\epsilon$. As $\epsilon$ is nonrational, one
has $v(x)<\epsilon$. By Definition 6.1 i), there exists an element
$a$ in $R^{+}$ such that $ax\in \fa_{\epsilon}$ and
$v(a)<\epsilon-v(x)$. Thus, $\epsilon < v(ax)=v(a)+v(x)<\epsilon$.
This is a contradiction.

ii) The ring $R^{+}$ is a directed union of module-finite extensions
$R'$ of $R$. So $x\in R'$ for some of them. We can assume that $R'$
is complete local and normal, see Lemma \ref{wel} v).  Since $(R')
^+=R^+$, without loss of generality we can replace $R$ by $R'$.
Thus, we may and do assume that $x\in R$. Let $y+xR^+$ be a torsion
element of $R^+/xR^+$ with respect to $\mathfrak{T}_v$. Without loss
of generality,  we can assume that $y\in R$. There are elements
$a_n\in R^+$ of arbitrarily small order such that $a_n y\in xR^+$.
\cite[Theorem 3.1]{HH2} asserts that $y\in (xR)^{\ast}$, the tight
closure of $xR$. By \cite[Corollary 10.2.7]{BH},
$(xR)^{\ast}=\overline{xR}$, the integral closure of $xR$. Recall
from \cite[Proposition 10.2.3]{BH} that $xR=\overline{xR}$, since
$R$ is normal. So $y+x R^+ =0$. Therefore, $R^+/xR^+$ is
torsion-free with respect to $\mathfrak{T}_v$.
\end{proof}

Let $A$ be a ring and let $M$ be an $A$-module. Recall that a prime
ideal $\fp$ is said to be associated to $M$ if $\fp=(0 :_A m)$ for
some $m \in M$. We denote the set of all associated prime ideals of
$M$ by $\Ass_{A}(M)$. The following result is an application of
almost zero modules.

\begin{proposition} Let $(R,\fm)$ be a Noetherian local domain which is not a field
 and let $\fp$ be a prime ideal of $R^{+}$.

 \begin{enumerate}
\item[i)] Let $\epsilon$ be a  positive nonrational real
number. Then $\fa_{\epsilon}$ is not finitely generated.

\item[ii)]  Adopt the above notation and assumptions. Then
$\Hom_{R^{+}}(R^{+} / \fp,R^{+} / \fa_{\epsilon})\neq0$ if and only
if $\fp=0$. In particular, $\Ass_{R^{+}}(R^{+} /
\fa_{\epsilon})=\emptyset$.
\item[iii)] Assume that  $R$ is  complete and has prime
characteristic and let $x\in R^{+}$ be a nonzero. Then
$\Hom_{R^{+}}(R^{+} / \fp,R^{+} / xR^{+})\neq0$ if and only if
$\fp=0$. In particular,  $\Ass_{R^{+}}(R^{+}/x{R^+})=\emptyset$.
\end{enumerate}
\end{proposition}

\begin{proof}
i) Suppose on the contrary that $\fa_{\epsilon}$ has a finite
generating set $\{x_{1},\cdots ,x_{n}\}$. Set $\alpha:= \min
\{v(x_{i})| 1\leq i\leq n\}$. Then $\epsilon <\alpha$. Let $\delta$
be a rational number, strictly  between $\epsilon$ and  $\alpha$. As
we saw in the proof of Lemma \ref{val} i), there is an element $a$
in $R^{+}$ such that $v(a)=\delta$. So $a\in \fa_{\epsilon}$. Thus,
$a=\Sigma_{i=1}^n r_i x_i$ for some $r_i\in R^+$. It turns out that
\[\begin{array}{ll}
v(a)&\geq\min\{v(r_ix_i):1\leq i \leq n\}\\&=\min\{v(r_i)+v(x_i)
:1\leq i \leq n\}\\&\geq \min\{v(x_i):1\leq i \leq n\}\\&
>\delta.\\
\end{array}\]
This  contradiction shows that $\fa_{\epsilon}$ is not finitely
generated.

ii) Recall that the pair $(\mathfrak{T}_v,\mathcal{F}_v)$ is a
maximal pair with having the property that $\Hom_{R^{+}}(T,F)=0$ for
all $T\in\mathfrak{T}_v$ and $F\in \mathcal{F}_v$.  Incorporated
this observation and Lemma \ref{6.6.2} along with Lemma \ref{6.6.2}
to prove the first claim.

Now, we show that $\Ass_{R^{+}}(R^{+} / \fa_{\epsilon})=\emptyset$.
Suppose on the contrary that $\Ass_{R^{+}}(R^{+} / \fa_{\epsilon})$
is nonempty and look for a contradiction. Let $\fp\in\Ass_{R^{+}}
(R^{+}/\fa_{\epsilon})$. Then $R^+/\fp\hookrightarrow
R^+/\fa_{\epsilon}$. Due to the first claim we know that $\fp=0$. On
the other hand $\dim_{R^+}(R^{+}/\fa_{\epsilon})<\dim_{R^+}(R^+)$.
This provides a contradiction.

iii)  By Lemma \ref{6.6.2} ii), the proof of the  claim is a
repetition of the proof of ii).
\end{proof}

 We close this section by the following
corollary.

\begin{corollary} Let $(R,\fm)$ be a $1$-dimensional
complete local domain of prime characteristic. Then
$\Ass_{R^{+}}(R^{+}/\fa)=\emptyset$ for any nonzero finitely
generated ideal $\fa$ of $R^{+}$.
\end{corollary}

\begin{proof} By Lemma \ref{val} $R^{+}$ is a valuation domain. So
its finitely generated ideals are principal. Therefore, the claim
follows by Proposition 6.8.
\end{proof}

\section{Perfect subrings of $R^+$}

In this section we deal with a subring of $R^+$ with a lot of
elements  of small order. Let $A$ be an integral domain with a
valuation map $v:A\longrightarrow \mathbb{R}\bigcup \{\infty\}$
which is nonnegative on $A$.  We say that an $A$-module $M$ is
almost zero with respect to $v$, if for all $m\in M$ and all
$\epsilon
>0$, there is an element $a\in A$ with $v(a)<\epsilon$ such that
$am=0$.  The notation $\mathfrak{T}^A_v$ stands for the class of
almost zero $A$-modules with respect to $v$.

\begin{lemma}Adapt the above notation and assumptions.
If $\mathfrak{T}^A_v\neq\emptyset$, then $A$ is a non-Noetherian
ring.
\end{lemma}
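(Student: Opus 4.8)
The plan is a short proof by contradiction: I would assume $A$ is Noetherian and derive from this, together with the existence of a nonzero almost‑zero module, an impossible lower bound on a valuation. So suppose $A$ is Noetherian. Since $\mathfrak{T}^A_v$ is nonzero, choose a nonzero $M\in\mathfrak{T}^A_v$, fix $0\neq m\in M$, and put $I:=(0:_A m)$. Because $m\neq 0$ the ideal $I$ is proper; and because $M$ is almost zero with respect to $v$, applying the defining condition to the element $m$ with, say, $\epsilon=1$ yields an $a\in A$ with $v(a)<1$, hence $a\neq 0$ (as $v(0)=\infty$), and $am=0$, that is, $a\in I$; thus $I\neq 0$. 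Running the same condition for every $\epsilon>0$ produces a nonzero $a_\epsilon\in I$ with $v(a_\epsilon)<\epsilon$, so that
$$\inf\{\,v(a)\ :\ 0\neq a\in I\,\}=0,$$
the infimum being $\ge 0$ since $v$ is nonnegative on $A$.

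The contradiction comes from Noetherianity. Write $I=(b_1,\dots,b_n)$ with each $b_i\neq 0$. As $I$ is a proper ideal it contains no unit, so every $b_i$ is a non-unit and therefore $v(b_i)>0$; set $c:=\min_{1\le i\le n}v(b_i)>0$. For an arbitrary nonzero $a\in I$, writing $a=\sum_{i=1}^{n}r_ib_i$ with $r_i\in A$ and using that $v$ is nonnegative on $A$ together with $v(x+y)\ge\min\{v(x),v(y)\}$, one gets
$$v(a)\ \ge\ \min_{1\le i\le n}\bigl(v(r_i)+v(b_i)\bigr)\ \ge\ \min_{1\le i\le n}v(b_i)\ =\ c\ >\ 0,$$
contradicting the equality $\inf\{v(a):0\neq a\in I\}=0$ obtained above. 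Hence $A$ is not Noetherian.

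The only step that genuinely uses the nature of $v$ is "$I$ proper $\Rightarrow$ the generators $b_i$ are non-units $\Rightarrow$ $v(b_i)>0$"; after that the argument is just the standard dichotomy that a finitely generated ideal has a uniform positive lower bound for the values of its nonzero elements, whereas an almost-zero quotient $A/I$ forces those values to get arbitrarily small. Extracting the proper nonzero ideal $I$ with small values from the bare non-triviality of $\mathfrak{T}^A_v$, and the one-line valuation estimate, are routine, so I anticipate no real obstacle beyond making that first observation precise.
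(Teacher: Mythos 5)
Your argument is correct and is essentially the same as the paper's. Both proofs hinge on the observation that if $a_1,\dots,a_n$ are nonzero elements of $A$ with positive values, then every element of $(a_1,\dots,a_n)$ has value $\ge\min_i v(a_i)>0$ (because $v$ is nonnegative on $A$), whereas the almost-zero condition on $M$ supplies, for any fixed $0\neq m\in M$, elements of $(0:_A m)$ with arbitrarily small value. You package the contradiction via finite generation of the single ideal $I=(0:_A m)$; the paper packages it as a strictly ascending chain $(a_1)\subsetneq(a_1,a_2)\subsetneq\cdots$ built from a strictly decreasing sequence $v(a_1)>v(a_2)>\cdots>0$. These are just the two standard reformulations of Noetherianity, so the content is the same, and if anything your version is a bit cleaner because the annihilator is made explicit. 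One caveat worth flagging, though: your step ``$b_i$ is a non-unit, therefore $v(b_i)>0$'' does not follow from the stated hypothesis that $v$ is merely nonnegative on $A$ --- a nonunit of $A$ can have value $0$ when $A$ is a proper subring of the valuation ring of $v$ (and indeed, with a trivial valuation the lemma would be false, since $A/(a)$ would be almost zero for any nonzero nonunit $a$). The paper's proof has the same hidden assumption, since it takes for granted that one can extract a \emph{strictly} decreasing sequence of \emph{positive} values $v(a_n)$. In the setting where the lemma is applied ($R^+$ or $R_\infty$ with the valuation of Lemma 2.4(i), which is positive on the maximal ideal) this is automatic, so the issue is with the generality of the statement rather than with your argument; but it would be better to derive $v(b_i)>0$ directly from the almost-zero condition (or add the hypothesis that $v$ is positive on nonunits) rather than assert it from nonunithood alone.
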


{\bf Proof.} Let $M$ be an almost zero $A$-module with respect to
$v$.  The existence of a such module  provides a sequence
$(a_n:n\in\mathbb{N})$ of elements of $A$ such that
$$\cdots<v(a_{n+1})<v(a_{n})<\cdots<v(a_{1})<1.$$ Consider the
following chain of ideals of $A$:
$$(a_1)\subsetneqq(a_1,a_2)\subsetneqq\cdots\subsetneqq(a_1,\cdots,a_n)\subsetneqq\cdots.$$
Note that, because $v$ is nonnegative on $A$, one has
$a_{n+1}\notin(a_1,\cdots,a_n)$ for all $n\in\mathbb{N}$.  $\Box$

In the following we recall the definition of $R_{\infty}$.
\begin{definition}\label{definf}
Let $(R,\fm,k)$ be a Noetherian local domain.
\begin{enumerate}
\item[i)] Assume that $\Char R=p$. By $R_{\infty}$ we denote $\{x\in R^+|x^{p^n}\in R
\textmd{ for some }n\in\mathbb{N}\cup\{0\}\}$.
\item[ii)] Assume that $R$ is  complete regular and has mixed
characteristic $p$. First, consider the case $p\notin \fm^2$. Due to
Cohen's Structure Theorem, we know that $R$ is of the form
$V[[x_2,\cdots,x_d]]$ for a discrete valuation ring $V$. By Faltings
algebra, we mean that
$$R_{\infty}:=\underset{n}{\varinjlim}V[p^{1/{p^n}}][[x_2^{1/{p^n}},\cdots,x_d^{1/{p^n}}]].$$
Now, consider the case $p\in \fm^2$. Then  Cohen's Structure Theorem
gives a discrete valuation ring $V$ and an element $u$ of
$\fm\setminus\fm^2$ such that $R$ is of the form
$V[[x_2,\cdots,x_d]]/(u)$. By $R_{\infty}$ we mean that
$$R_{\infty}:=\underset{n}{\varinjlim}V[p^{1/{p^n}}][[x_2^{1/{p^n}},\cdots,x_d^{1/{p^n}}]]/(u).$$
\item[iii)] Assume that $R$ is complete regular and has
equicharacteristic zero, i.e. $\Char R=\Char k=0$. Then by Cohen's
Structure Theorem, $R$ is of the form $k[[x_1,\cdots,x_d]]$. Take
$p$ be any prime number. By $R_{\infty}$ we mean that
$\underset{n}{\varinjlim} k[[x_1^{1/{p^n}},\cdots,x_d^{1/{p^n}}]]$.
\end{enumerate}
\end{definition}

Theorem 7.10 is  our main result in this section. To prove it, we
need a couple of lemmas.

\begin{lemma} \label{alef0}
Let $(R,\fm)$ be as Definition \ref{definf}. Then $R_{\infty}$ is
$\aleph_0$-Noetherian.
\end{lemma}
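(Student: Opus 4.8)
The goal is to show that $R_{\infty}$ is $\aleph_0$-Noetherian, i.e.\ every ideal is countably generated. The natural strategy, mirroring the proof of Lemma \ref{alef} for $R^+$, is to first show that every \emph{prime} ideal of $R_{\infty}$ is countably generated, and then invoke the countable analogue of Cohen's Theorem (as cited in the proof of Lemma \ref{alef}: if every prime ideal of a ring $A$ is generated by a countable set, then $A$ is $\aleph_0$-Noetherian). So the real content is the statement about prime ideals.

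To control the primes of $R_{\infty}$, I would exploit the structure $R_{\infty}=\bigcup_n R_n$ where $R_n$ is obtained from $R$ by adjoining $p^n$-th roots; more precisely, writing $R$ as a module-finite extension of a complete regular local ring $S=k[[x_1,\dots,x_d]]$ (or the mixed-characteristic/quotient variants of Definition 7.3) via Cohen's Structure Theorem, one has $S_{\infty}=\bigcup_n S^{1/p^n}$ with each $S^{1/p^n}\cong S$ Noetherian, and $R_{\infty}$ is module-finite over $S_{\infty}$. The key point is that $S_{\infty}$ is a directed union of a \emph{countable} chain of Noetherian rings $\{S^{1/p^n}:n\in\mathbb{N}\}$, so by Lemma \ref{co} iii) it is $\aleph_0$-Noetherian. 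Then I would argue that a module-finite extension of an $\aleph_0$-Noetherian ring is again $\aleph_0$-Noetherian: if $R_{\infty}=\sum_{j=1}^{m}S_{\infty}\omega_j$ as an $S_{\infty}$-module, then any ideal $\fa$ of $R_{\infty}$ is an $S_{\infty}$-submodule of a finitely generated $S_{\infty}$-module, hence (since finitely generated modules over an $\aleph_0$-Noetherian ring have all submodules generated by $\le\aleph_0$ elements, by a routine Noether-style induction on the number of generators) is generated by countably many elements as an $S_{\infty}$-module, a fortiori as an ideal of $R_{\infty}$. Alternatively, one can run the prime-ideal route: a prime $\fp$ of $R_{\infty}$ contracts to a prime $\fq$ of $S_{\infty}$ with $\fp$ lying over $\fq$; since $R_{\infty}/\fq R_{\infty}$ is module-finite over the $\aleph_0$-Noetherian ring $S_{\infty}/\fq$, its ideals — in particular $\fp/\fq R_{\infty}$ — are countably generated, and $\fq R_{\infty}$ is generated by the countably many generators of $\fq$, so $\fp$ is countably generated.

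The main obstacle I anticipate is handling the case where $R$ is not itself regular, i.e.\ correctly setting up the reduction to a regular base ring. Definition 7.3 only literally defines $R_{\infty}$ for regular $R$ (with the three characteristic cases) or via the $p$-power-roots description when $\Char R=p$; in the prime-characteristic case one should check directly that $R_{\infty}=\bigcup_n \{x\in R^+:x^{p^n}\in R\}$ is module-finite over $S_{\infty}$ where $S\subseteq R$ is a Noether normalization, using that $R$ is module-finite over $S$ and that taking $p^n$-th roots is compatible with the Frobenius. A secondary technical point is justifying the "submodules of finitely generated modules over $\aleph_0$-Noetherian rings are countably generated" lemma — but this is the exact cardinal analogue of the classical Noetherian argument and should go through verbatim, so I would simply record it (or cite Lemma \ref{co} together with an easy induction). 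Once the module-finiteness over a countable union of Noetherian rings is in place, everything else is formal, and the proof is short.
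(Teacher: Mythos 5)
Your approach diverges from the paper's, and the step you yourself flag as ``a technical point'' --- that $R_{\infty}$ is module-finite over $S_{\infty}$ --- is in fact false in general, so the plan has a genuine gap. Take $p$ odd and $R=\mathbb{F}_p[[t^2,t^3]]$, which is module-finite over the regular ring $S=\mathbb{F}_p[[t^2]]$. One checks $R_{\infty}=\bigcup_n\mathbb{F}_p[[t^{1/p^n}]]$, a valuation domain with value group $\mathbb{Z}[1/p]$, whereas $S_{\infty}=\bigcup_n\mathbb{F}_p[[t^{2/p^n}]]$ has value group $2\mathbb{Z}[1/p]$. If $R_{\infty}$ were a finitely generated $S_{\infty}$-module, all of its generators would lie in a single $R_N$, forcing $R_{\infty}\subseteq S_{\infty}\cdot R_N$; but $1/p^{N+1}$ cannot be written as $2a/p^m+b/p^N$ with integers $a,b\geq 0$ when $p$ is odd, so $t^{1/p^{N+1}}\notin S_{\infty}\cdot R_N$, a contradiction. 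A second problem is that Cohen's Structure Theorem (and Noether normalization) requires completeness, which the lemma does not assume in the prime-characteristic case; the auxiliary lemma you want (``submodules of finitely generated modules over an $\aleph_0$-Noetherian ring are countably generated'') is itself fine, but the hypothesis it would be applied to fails.

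The paper's proof avoids any regular subring. In characteristic $p$ it shows directly that each $R_n:=\{x\in R_{\infty}:x^{p^n}\in R\}$ is Noetherian: for a prime $\fp_n$ of $R_n$, write $\fp_n\cap R=(x_1,\dots,x_\ell)R$; any $x\in\fp_n$ satisfies $x^{p^n}=\sum_i r_i x_i$ with $r_i\in R$, and additivity together with injectivity of the Frobenius on the domain $R^+$ give $x=\sum_i r_i^{1/p^n}x_i^{1/p^n}$, whence $\fp_n=(x_1^{1/p^n},\dots,x_\ell^{1/p^n})R_n$. Thus every prime of $R_n$ is finitely generated, $R_n$ is Noetherian by Cohen's theorem, and Lemma \ref{co} iii) applied to the countable chain $\{R_n\}$ yields that $R_{\infty}$ is $\aleph_0$-Noetherian; the characteristic-zero case is immediate from Definition 7.2 together with Lemma \ref{co} iii). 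This route is both more elementary and stronger (each $R_n$ is honestly Noetherian), and it uses no completeness or regularity hypothesis on $R$.
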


\begin{proof} First assume that $\Char R= 0$. Then by Definition 7.2,
$R_{\infty}$ is the directed union of Noetherian rings. In view of
Lemma 2.1 iii), $R_{\infty}$ is $\aleph_0$-Noetherian. Now assume
that $\Char R= p$ For each positive integer $n$, set $R_n:=\{x\in
R_\infty|x^{p^n}\in R\}$. Thus $R_\infty=\bigcup R_n$. 
It is easy to see that $R_n$ is Noetherian.
 In view of Lemma 2.1 iii), $R_{\infty}$ is
$\aleph_0$-Noetherian.
\end{proof}

\begin{lemma} (\cite[Theorem 2.3.3]{Gl})\label{dir1}
Let $\{R_\gamma:\gamma\in \Gamma\}$ be a  directed system of rings
and let $A=\underset{\gamma\in \Gamma}{\varinjlim} R_\gamma$.
Suppose that for $\gamma\leq \gamma'$, $R_{\gamma'}$ is  flat over
$R_{\gamma}$ and that $R_{\gamma}$ is a coherent ring for all
$\gamma\in \Gamma$, then $A$ is a coherent ring.
\end{lemma}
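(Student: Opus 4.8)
The plan is to show directly that every finitely generated ideal of $A$ is finitely presented over $A$, which is the paper's definition of coherence. The whole argument rests on one observation: $A$ is flat as a module over each $R_\alpha$. Indeed, fixing $\alpha$, the indices $\beta\geq\alpha$ are cofinal in the directed index set, so $A=\varinjlim_{\beta\geq\alpha}R_\beta$ as an $R_\alpha$-module; each $R_\beta$ with $\beta\geq\alpha$ is $R_\alpha$-flat by hypothesis, and a filtered colimit of flat modules is flat, so $A$ is $R_\alpha$-flat.

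Now take a finitely generated ideal $\fa=(a_1,\dots,a_n)A$. Since $A$ is a filtered colimit of the $R_\beta$, I would pass to an index $\alpha$ large enough that $a_1,\dots,a_n$ lie in the image of the structure map $R_\alpha\to A$; write $a_i$ as the image of $b_i\in R_\alpha$ and set $\fb:=(b_1,\dots,b_n)R_\alpha$. Coherence of $R_\alpha$ gives a finite presentation $R_\alpha^m\to R_\alpha^n\to\fb\to 0$. Tensoring with $A$ over $R_\alpha$ and using right exactness yields an exact sequence $A^m\to A^n\to\fb\otimes_{R_\alpha}A\to 0$, so $\fb\otimes_{R_\alpha}A$ is a finitely presented $A$-module. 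On the other hand, the canonical map $\fb\otimes_{R_\alpha}A\to R_\alpha\otimes_{R_\alpha}A=A$ has image the ideal $\fb A$, and it is injective because $A$ is $R_\alpha$-flat; thus $\fb\otimes_{R_\alpha}A\cong\fb A$. Since $\fb A$ is generated over $A$ by the images of the $b_i$, namely the $a_i$, we have $\fb A=\fa$. Hence $\fa$ is finitely presented, and as $\fa$ was an arbitrary finitely generated ideal, $A$ is coherent.

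The argument is almost entirely formal, so there is no real ``hard part'': the only place the hypotheses genuinely enter is the flatness of $A$ over each $R_\alpha$, which is precisely what makes the identification $\fb\otimes_{R_\alpha}A\cong\fb A\subseteq A$ legitimate and lets the finite presentation of $\fb$ descend to $\fa$; without flatness this identification can fail. I would therefore devote the care in the write-up to the colimit bookkeeping (cofinality of $\{\beta\geq\alpha\}$ and choosing one common index for the finitely many generators) and to citing the two standard facts used, that filtered colimits of flat modules are flat and that finite presentation of modules is stable under base change.
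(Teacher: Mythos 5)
Your argument is correct and complete, and it is essentially the standard proof of this fact. The paper itself gives no proof — the lemma is stated with a citation to Glaz's book (Theorem 2.3.3), so there is nothing in the paper to compare against line by line; your write-up supplies the argument that the paper outsources. Every step you take is sound: the cofinality observation makes $A=\varinjlim_{\beta\geq\alpha}R_\beta$ over $R_\alpha$, filtered colimits of flat modules are flat, finitely many elements of a filtered colimit lift to a common stage, right-exactness of tensor transports the finite presentation of $\fb$ to $\fb\otimes_{R_\alpha}A$, and flatness of $A$ over $R_\alpha$ is exactly what identifies $\fb\otimes_{R_\alpha}A$ with the ideal $\fb A=\fa$ inside $A$. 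Your own remark about where the hypotheses bite is also on target: without flatness the map $\fb\otimes_{R_\alpha}A\to A$ need not be injective and the finite presentation would not descend.
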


\begin{remark}\label{car}(\cite[VI, Exercise 17]{CE}) Let $\{A_n\}$ be
 a directed system of rings with directed limit $A$. For
each $n$ let $M_n$ and $N_n$ be $A_n$-modules. Assume that $\{M_n\}$
and $\{N_n\}$ are  directed  systems. Then the respective directed
limits $M$ and $N$ are $A$-modules, and for all $i$ we may identify
$\Tor_i^A(M,N)$ with the directed limit of the modules
$\Tor_i^{A_n}(M_n,N_n)$.
\end{remark}

\begin{lemma} \label{dir} The following assertions hold.
\begin{enumerate}
\item[i)] Let $\{(R_n,\fm_n):n\in \mathbb{N}\}$ be a directed chain of
Noetherian local rings such that for each $n<m$, $R_m$ is flat over
$R_n$ and let $A=\bigcup R_n$. Then $\gd(A)\leq \sup\{\gd(R_n):n\in
\mathbb{N}\}+1$.
\item[ii)] Let $(R,\fm)$ be a Noetherian regular local ring.
Then $\gd(R_{\infty})\leq \dim R+1$.
\end{enumerate}
\end{lemma}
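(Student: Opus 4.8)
The plan is to prove part i) first and then derive part ii) as an immediate corollary. For part i), I would start from Auslander's global dimension theorem (Lemma \ref{g} iii)), which reduces the computation of $\gd(A)$ to bounding $\pd_A(A/\fa)$ uniformly over all ideals $\fa$ of $A$; equivalently, by Lemma \ref{g} i) together with Lemma \ref{j} iii), it suffices to control $\wdim A$, since each $R_n$ is Noetherian and $A=\bigcup R_n$ is $\aleph_0$-Noetherian by Lemma \ref{co} iii), giving $\gd A\leq\wdim A+1$. So the real task is to show $\wdim A\leq\sup\{\gd(R_n):n\in\mathbb{N}\}=:s$. By Lemma \ref{g} i), $\wdim A=\sup\{\fd_A(A/\fa):\fa\text{ finitely generated}\}$, so fix a finitely generated ideal $\fa$ of $A$; its finitely many generators all lie in some $R_n$, so $\fa=\fa_n A$ where $\fa_n=\fa\cap R_n$ is a finitely generated (hence, $R_n$ being Noetherian, finitely presented) ideal of $R_n$.

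The key step is a flat base change / directed limit argument. Since for $m\geq n$ the ring $R_m$ is flat over $R_n$, the module $A=\varinjlim_m R_m$ is flat over $R_n$ (a directed limit of flat modules is flat). Hence $\fd_A(A/\fa)=\fd_A(A\otimes_{R_n}(R_n/\fa_n))\leq\fd_{R_n}(R_n/\fa_n)\leq\gd(R_n)\leq s$: the first inequality is the standard fact that flat base change does not increase flat dimension (one tensors a flat $R_n$-resolution of $R_n/\fa_n$ with $A$ and uses flatness of $A$ over $R_n$ to see the result is a flat $A$-resolution of $A/\fa$, or invokes Remark \ref{car} to identify the relevant $\Tor$ modules as directed limits of $\Tor^{R_m}$ modules which vanish above degree $s$). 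Taking the supremum over all finitely generated $\fa$ gives $\wdim A\leq s$, and therefore $\gd A\leq s+1$, proving i). Part ii) then follows: in the prime characteristic case $R_\infty=\bigcup R_n$ with $R_n=\{x\in R_\infty:x^{p^n}\in R\}$; each $R_n$ is Noetherian by the argument in Lemma \ref{alef0}, and since $R$ is regular one checks each $R_n$ is regular (it is isomorphic to $R$ via the Frobenius, or is module-finite and flat over $R$ by the flatness of Frobenius over a regular ring) with $\gd(R_n)=\dim R_n=\dim R$, and $R_m$ is flat over $R_n$ for $m\geq n$ (again Frobenius flatness); in the characteristic-zero case one argues identically with the presentation of $R_\infty$ from Definition 7.2 iii). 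Applying i) yields $\gd(R_\infty)\leq\dim R+1$.

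The main obstacle I anticipate is the verification in part ii) that the intermediate rings $R_n$ are themselves regular local of the same dimension as $R$ and that the transition maps $R_n\hookrightarrow R_m$ are flat. This is where the regularity hypothesis on $R$ is genuinely used, through Kunz's theorem that Frobenius is flat precisely when the ring is regular; the identification of $R_n$ with a Frobenius twist of $R$ (in equal characteristic $p$) or with an explicit power-series ring in $p^n$-th roots of the parameters handles both the regularity and the flatness simultaneously. Part i) itself is essentially formal once the flatness of $A$ over each $R_n$ and the $\aleph_0$-Noetherian property are in hand, so I would keep that argument short and spend the detail on the structural claims about the $R_n$ in part ii).
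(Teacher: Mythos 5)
Your proof of part ii) matches the paper's exactly, and for part i) you arrive at the same bound by a genuinely different route. The paper first invokes Lemma~\ref{dir1} to establish that $A$ is coherent, then fixes a finitely generated ideal $\fa$, observes that $A/\fa$ is finitely presented, and applies Lemma~\ref{g}~ii) together with the Cartan--Eilenberg identification of $\Tor^A_j(A/\fa,A/\fm_A)$ with $\varinjlim_i\Tor^{R_i}_j(R_i/\fa_i,R_i/\fm_i)$ (Remark~\ref{car}) to conclude $\pd_A(A/\fa)\le d$; only afterwards does it pass through $\wdim$, $\aleph_0$-Noetherianness, and Lemma~\ref{j}~iii) to get the global bound. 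You instead short-circuit the coherence step entirely: you observe directly that $A$ is flat (indeed faithfully flat) over each $R_n$, write $A/\fa\cong A\otimes_{R_n}(R_n/\fa_n)$, and use flat base change to get $\fd_A(A/\fa)\le\fd_{R_n}(R_n/\fa_n)\le\gd(R_n)$, then conclude with the same $\aleph_0$-Noetherian plus Osofsky finish. Your variant is cleaner in that it never needs Lemma~\ref{dir1} or the finitely-presented/$\Tor$-vanishing criterion of Lemma~\ref{g}~ii); it works with flat dimension throughout and only converts to projective dimension at the very last step. The paper's version does yield the slightly sharper intermediate fact that $\pd_A(A/\fa)\le d$ (not merely $\fd_A(A/\fa)\le d$) for finitely generated $\fa$, though that extra strength is not used in reaching the stated bound. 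Both arguments are correct.
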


\begin{proof} i) Without loss of generality we can assume that
$d:=\sup\{\gd(R_n):n\in \mathbb{N}\}<\infty$. First, we show that
$\pd_{A}(A/\fa)\leq d$ for all finitely generated ideals $\fa$ of
$A$.  Note that $A$ is quasi-local with unique maximal ideal
$\fm_A:=\bigcup \fm_n$. In view of Lemma \ref{dir1}, $A$ is
coherent. Since $A/\fa$ is a finitely presented $A$-module, by Lemma
\ref{g} ii), $\pd_A(A/\fa)\leq d$ if and only if
$\Tor_{d+1}^A(A/\fa,A/\fm_A)=0$. Set $\fa_i:=\fa\cap R_i$, and
recall that $\fm_i=\fm_A\cap R_i$. For any $j>d$ and for any $i$ we
observe that $\Tor_j^{R_i}(R_i/ \fa_i,R_i/ \fm_i)=0$, because
$j>d\geq \gd(R_i)$. A typical element of
$\underset{i}{\varinjlim}R_i/ \fm_i$ can be expressed by
$[a_i+\fm_i]$. The assignment $[a_i+\fm_i]\mapsto [a_i]+\fm\in
A/\fm$ provides a well-define map which is in fact  a natural
isomorphism between the rings $\underset{i}{\varinjlim}R_i/ \fm_i$
and $A/\fm$. Similarly, $\underset{i}{\varinjlim}R_i/ \fa_i\cong
A/\fa$. In view of Remark \ref{car}, we see that
$$\Tor_j^{A}(A/ \fa,A/ \fm_A)\cong\underset{i}{\varinjlim}\Tor_j^{R_i}(R_i/
\fa_i,R_i/ \fm_i)=0,$$which  shows that $\pd_{A}(A/\fa)\leq d$ for
all finitely generated ideals $\fa$ of $A$.

Now, we show that $\gd(A)\leq d+1$.  In view of Lemma \ref{g} iii),
we need to prove that $\pd_{A}(A/\fb)\leq d+1$ for all ideals $\fb$
of $A$.  Lemma 2.1 iii) asserts that $A$ is $\aleph_0$-Noetherian.
Therefore, Lemma \ref{j} iii) along with Lemma \ref{g} i) yield that
\[\begin{array}{ll}
\pd_{A}(A/ \fb)&\leq\fd_{A}(A/ \fb)+1\\&\leq\sup\{\fd(A/ \fa):\fa
\emph{ is a finitely generated ideal of }A\}+1\\&\leq\sup\{\pd(A/
\fa):\fa \emph{ is a finitely generated ideal of }A\}+1\\&\leq d+1,
\\
\end{array}\]
which completes the proof.

ii) First assume that $\Char R= 0$. Then by Definition 7.2,
$R_{\infty}$ is the directed union of regular local rings each of
them are free over the preceding. So the claim follows by i). Now,
assume that $R$ is regular and $\Char R=p$.  Let $d:=\dim R$. There
exists a system of parameters $\{x_1,\cdots,x_d\}$ of $R$ such that
$\fm=(x_1,\cdots,x_{d})$. For each positive integer $n$, set
$R_n:=\{x\in R_\infty|x^{p^n}\in R\}$. As we saw in the proof of
Lemma \ref{alef0} the ring $R_{n}$ is Noetherian and local with the
maximal ideal $(x_1^{1/p^{n}},\cdots,x_{d}^{1/p^{n}})R_{n}$ and
$\dim R_n=d$, because $R_n$ is integral over $R$. Hence $R_{n}$ is
regular, since its maximal ideal can be generated by $\dim R_{n}$
elements. By using a result of Kunz \cite[Corollary 8.2.8]{BH}, one
can find that $R_m$ is flat over $R_n$  for all $n<m$. Now, i)
completes the proof.
\end{proof}

From here on it will be assumed that $\Char R=p$ and we shall seek
to give results analogue to the results of previous sections over
$R_{\infty}$. Note that $R_{\infty}$ is quasi-local. We denote its
unique maximal ideal by $\fm_{R_{\infty}}$.

\begin{lemma} \label{gen1} Let $(R,\fm)$ be a  Noetherian local
domain  of prime characteristic $p$. Let $x_1,\cdots,x_{\ell}$ be a
finite sequence of nonzero and nonunit elements of $R_{\infty}$.
\begin{enumerate}
\item[i)]
$R_{\infty}/\sum_{i=1}^{\ell}(x_i^{\infty})R_{\infty}$ has a free
resolution of countably generated free $R_{\infty}$-modules of
length bounded by $2\ell$.
\item[ii)]
$\pd_{R_{\infty}}(R_{\infty}/\sum_{i=1}^{\ell}(x_i^{\infty})R_{\infty})\leq
\ell+1$. In particular,
$\pd_{R_{\infty}}(R_{\infty}/\fm_{R_{\infty}})\leq \dim R+1$.
\end{enumerate}
\end{lemma}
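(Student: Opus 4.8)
The plan is to transcribe the treatment of $R^{+}$ from Section 4, replacing $R^{+}$ by $R_{\infty}$ throughout. For part i) the first step is to record the $R_{\infty}$-analogue of Lemma \ref{key} ii): for a nonzero nonunit $x\in R_{\infty}$, the ideal $(x^{\infty})R_{\infty}$ has a free resolution of countably generated free $R_{\infty}$-modules of length one. This is obtained by the verbatim construction used in the proof of Lemma \ref{key} ii): take $F_{0}$ free on the countable basis $\{e_{n}:n\in\mathbb{N}\cup\{0\}\}$, map $e_{n}\mapsto x^{1/p^{n}}$, and check that the kernel is freely generated by $\eta_{n}':=e_{n}-x^{(p-1)/p^{n+1}}e_{n+1}$; the argument only uses that the elements $x^{1/p^{n}}$ and $x^{(p-1)/p^{n+1}}$ lie in $R_{\infty}$ when $x\in R_{\infty}$ and that $R_{\infty}$ is a domain. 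Splicing with $0\to(x_{i}^{\infty})R_{\infty}\to R_{\infty}\to R_{\infty}/(x_{i}^{\infty})R_{\infty}\to 0$ yields a bounded free resolution $\textbf{Q}^{i}$ of $R_{\infty}/(x_{i}^{\infty})R_{\infty}$ consisting of countably generated free $R_{\infty}$-modules of length $2$.

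Next I would run the induction on $\ell$ exactly as in Lemma \ref{gen}. Writing $\textbf{P}:=\bigotimes_{i=2}^{\ell}\textbf{Q}^{i}$, which by the induction hypothesis is a bounded free resolution of $R_{\infty}/\sum_{i=2}^{\ell}(x_{i}^{\infty})R_{\infty}$ of length at most $2\ell-2$ by countably generated free modules, one has by \cite[Theorem 11.21]{R} that $H_{n}(\textbf{Q}^{1}\otimes_{R_{\infty}}\textbf{P})\cong\Tor_{n}^{R_{\infty}}(R_{\infty}/(x_{1}^{\infty})R_{\infty},\,R_{\infty}/\sum_{i=2}^{\ell}(x_{i}^{\infty})R_{\infty})$. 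Since $(x_{1}^{\infty})R_{\infty}$ is a directed union of the free rank-one modules $x_{1}^{1/p^{n}}R_{\infty}$ it is flat, which kills $H_{n}$ for $n>1$; the $n=1$ term is the quotient $\big((x_{1}^{\infty})R_{\infty}\cap\sum_{i=2}^{\ell}(x_{i}^{\infty})R_{\infty}\big)/\big((x_{1}^{\infty})R_{\infty}\cdot\sum_{i=2}^{\ell}(x_{i}^{\infty})R_{\infty}\big)$, which vanishes by \cite[Proposition 2.11(3)]{H1} (valid for $A=R_{\infty}$ as well as $A=R^{+}$, cf. the proof of Lemma \ref{wel} iii)); and $H_{0}$ is $R_{\infty}/\sum_{i=1}^{\ell}(x_{i}^{\infty})R_{\infty}$. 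Hence $\bigotimes_{i=1}^{\ell}\textbf{Q}^{i}$ is the required bounded free resolution by countably generated free modules of length at most $2\ell$.

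For part ii) I would take the cheaper homological-dimension route rather than reading it off the length-$2\ell$ resolution. By Lemma \ref{wel} vii), $\fd_{R_{\infty}}(R_{\infty}/\sum_{i=1}^{\ell}(x_{i}^{\infty})R_{\infty})\leq\ell$, and by Lemma \ref{alef0} the ring $R_{\infty}$ is $\aleph_{0}$-Noetherian; applying Lemma \ref{j} iii) with $n=0$ gives $\pd_{R_{\infty}}(R_{\infty}/\sum_{i=1}^{\ell}(x_{i}^{\infty})R_{\infty})\leq\fd_{R_{\infty}}(R_{\infty}/\sum_{i=1}^{\ell}(x_{i}^{\infty})R_{\infty})+1\leq\ell+1$. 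For the last assertion, set $d:=\dim R$; if $d=0$ then $R_{\infty}$ is a field and there is nothing to prove, and if $d\geq 1$ pick a system of parameters $x_{1},\dots,x_{d}$ for $R$ (these are nonzero nonunits of $R_{\infty}$). By Corollary \ref{local} i), $\sum_{i=1}^{d}(x_{i}^{\infty})R_{\infty}$ is a maximal ideal of the quasi-local ring $R_{\infty}$, hence equals $\fm_{R_{\infty}}$, and therefore $\pd_{R_{\infty}}(R_{\infty}/\fm_{R_{\infty}})\leq d+1=\dim R+1$.

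I expect no serious obstacle: the only slightly delicate points are checking that the explicit free resolution of Lemma \ref{key} ii) transfers line by line to $R_{\infty}$ and that the equality $(x_{1}^{\infty})R_{\infty}\cap\sum_{i\geq 2}(x_{i}^{\infty})R_{\infty}=(x_{1}^{\infty})R_{\infty}\cdot\sum_{i\geq 2}(x_{i}^{\infty})R_{\infty}$ is indeed covered by Hochster's results in the $R_{\infty}$ case; everything else is a direct transcription of Section 4 combined with the flat-dimension bound of Lemma \ref{wel} vii) and the $\aleph_{0}$-Noetherianity of $R_{\infty}$.
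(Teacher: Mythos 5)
Your proposal matches the paper's proof: part i) is exactly the ``repeat Section 4'' argument that the paper invokes (tensoring the length-$2$ resolutions $\textbf{Q}^{i}$ built from the $R_{\infty}$-analogue of Lemma \ref{key} ii), with Tor-vanishing via flatness of $(x_{1}^{\infty})R_{\infty}$ and \cite[Proposition 2.11(3)]{H1}), and part ii) is verbatim the paper's route through Lemma \ref{wel} vii), Lemma \ref{alef0}, Lemma \ref{j} iii), and Corollary \ref{local} i). You merely wrote out the transcription explicitly where the paper says ``the proof is a repetition of Theorem 4.6 i),'' so the content is the same.
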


\begin{proof}  i) The proof of the first claim is a repetition of the
proof of Theorem 4.6 i).

ii) Recall from  Lemma \ref{wel} vii)  that
$\fd_{R_{\infty}}(R_{\infty}/\sum_{i=1}^{\ell}(x_i^{\infty})
R_{\infty})\leq \ell$.  By Lemma \ref{alef0}, $R_{\infty}$ is
$\aleph_0$-Noetherian. Thus, Lemma \ref{j} iii) shows that
$\pd_{R_{\infty}}(R_{\infty}/\sum_{i=1}^{\ell}(x_i^{\infty})
R_{\infty})\leq \ell+1$. Let $d:=\dim R$ and assume that
$\{x_1,\cdots,x_d\}$ is a system of parameters for $R$. To conclude
the Lemma, it remains recall from Corollary \ref{local} that
$\fm_{R_{\infty}}=\sum_{i=1}^{d}(x_i^{\infty})R_{\infty}$.
\end{proof}

Now, we recall the following result of Sally and Vasconcelos.

\begin{lemma} \label{SV} (\cite[Corollary 1.5]{SV})
Prime ideals in a 2-dimensional Noetherian local ring admit a
bounded number of generators.
\end{lemma}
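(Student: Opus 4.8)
The plan is to reduce, in stages, to the one genuinely infinite family of primes --- the height-one primes --- and to bound their number of generators uniformly.

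First I would dispose of the primes that occur in bounded number. In a $2$-dimensional Noetherian local ring $(R,\fm)$ the only height-two prime is $\fm$ itself (a prime strictly below $\fm$ would force $\Ht\fm\ge 3$), and $\mu(\fm)=\operatorname{embdim}(R)<\infty$; the minimal primes $\fq_1,\dots,\fq_r$ of $R$ are finite in number, so $\max_i\mu(\fq_i)<\infty$. Every prime $\fp$ contains some $\fq_i$, and $\mu_R(\fp)\le\mu_R(\fq_i)+\mu_{R/\fq_i}(\fp/\fq_i)$, so it is enough to bound $\mu_S(\mathfrak q)$ for the height-one primes $\mathfrak q$ of each of the finitely many quotients $S=R/\fq_i$, each a local domain of dimension $\le 2$. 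Hence I may assume $R$ is a $2$-dimensional Noetherian local \emph{domain} and must only bound $\mu(\fp)$ for $\fp$ of height one.

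Fix such a $\fp$ and pick $0\neq x\in\fp$. Since $R$ is a domain of dimension two, $x$ is a nonzerodivisor, $\dim R/(x)=1$, and $\fp$ is automatically a minimal prime of $(x)$: a prime strictly between $(x)$ and $\fp$ would be a minimal prime of $R$ and hence could not contain the nonzerodivisor $x$. Writing $S:=R/(x)$ and $\overline{\fp}:=\fp/(x)$ --- a minimal prime of the one-dimensional local ring $S$ --- one has $\mu_R(\fp)\le 1+\mu_S(\overline{\fp})$, so everything comes down to bounding $\mu_S(\overline{\fp})$ uniformly in $\fp$. The localization $S_{\overline{\fp}}=R_\fp/xR_\fp$ is Artinian, $\mu_S(\overline{\fp})$ is controlled by $\ell(S_{\overline{\fp}})$ together with $\mu(\fm_S)\le\mu(\fm)$, and $R_\fp$ is a one-dimensional local domain, hence Cohen--Macaulay; choosing $x$ so that $(x)R_\fp$ is a minimal reduction of $\fp R_\fp$ (possible since the residue field of $R_\fp$, the fraction field of the infinite ring $R/\fp$, is infinite) gives $\ell(R_\fp/xR_\fp)=e(\fp R_\fp)$, and Abhyankar's inequality then yields $\operatorname{embdim}(R_\fp)\le e(\fp R_\fp)$.

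The main obstacle is the uniformity: one must bound $e(\fp R_\fp)$, equivalently the lengths $\ell(S_{\overline{\fp}})$, independently of the height-one prime $\fp$. This is exactly where the hypothesis $\dim R=2$ is essential --- the analogous statement for $\dim R=3$ is false --- and it is the technical heart of Sally--Vasconcelos: one compares $e(\fp R_\fp)$ with the multiplicity $e(R)$ at the closed point by a semicontinuity (reduction-theoretic) argument, obtaining a bound of the shape $\mu(\fp)\le e(R)+c$ valid for all height-one primes at once. Since the present paper only needs the statement, I would simply invoke \cite[Corollary 1.5]{SV} for this last bound; the point of the reductions above is to isolate it as the single non-formal ingredient.
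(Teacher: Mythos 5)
The paper itself offers no proof of Lemma~\ref{SV}: it is stated as a bare citation of \cite[Corollary 1.5]{SV}. Since your proposal also ends by invoking the same corollary for the key uniform bound, you and the paper land in the same place, and for the purposes of the paper a direct citation is all that is required.

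That said, the intermediate reductions in your sketch are not all sound. The reduction to a two-dimensional local domain and then to height-one primes is fine, as is the inequality $\mu_R(\fp)\le 1+\mu_S(\overline{\fp})$ with $S:=R/(x)$. But the pivotal middle assertion --- that $\mu_S(\overline{\fp})$ is ``controlled by'' $\ell(S_{\overline{\fp}})$ together with $\mu(\fm_S)$ --- is offered without any justification, and it is precisely the kind of quantitative claim that would need its own argument: a minimal prime of a one-dimensional Noetherian local ring has no obvious generator bound in terms of those two invariants alone. The appeal to Abhyankar's inequality is also a non sequitur for your purposes: it bounds $\operatorname{embdim}(R_\fp)=\mu_{R_\fp}(\fp R_\fp)$, which is the \emph{localized} minimal number of generators, and since $\mu_{R_\fp}(\fp R_\fp)\le\mu_R(\fp)$ this points in the wrong direction. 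Finally, the residual target you isolate --- a uniform bound on $e(\fp R_\fp)$ --- is not what \cite[Corollary 1.5]{SV} literally asserts (that corollary bounds $\mu(\fp)$ directly), so citing it there does not close the circle you set up. None of this affects the paper, whose citation carries the entire burden, but as an attempted reconstruction of the Sally--Vasconcelos argument the sketch has real gaps.
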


\begin{lemma} \label{aux} Let $(R,\fm)$ be a  Noetherian local
domain  of prime characteristic $p$. Then any prime ideal  $\fp$ of
$R_{\infty}$ is of the form $\sum_{i=1}^{\ell}(x_{i}^{\infty})$ for
some $x_1,\cdots,x_{\ell}\in R_{\infty}$. Furthermore, if $\dim
R<3$, $\ell$ can be chosen such that it does not  depend on the
choice of $\fp$.
\end{lemma}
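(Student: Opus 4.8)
The plan is to reduce everything to the Noetherian ring $R$ using the chain $R_n:=\{x\in R_\infty : x^{p^n}\in R\}$ from the proof of Lemma \ref{alef0}, so that $R_\infty=\bigcup_n R_n$. Given a prime ideal $\fp$ of $R_\infty$, I would put $\fp_0:=\fp\cap R$, which is a prime ideal of the Noetherian ring $R$, hence finitely generated; write $\fp_0=(x_1,\dots,x_\ell)R$ with $\ell=\mu(\fp_0)$ the minimal number of generators. The goal is then $\fp=\sum_{i=1}^{\ell}(x_i^\infty)R_\infty$, and the $x_i$ automatically lie in $R\subseteq R_\infty$, as required.

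The inclusion $\sum_{i=1}^{\ell}(x_i^\infty)R_\infty\subseteq\fp$ is the easy one: each $x_i$ lies in $\fp_0\subseteq\fp$, and $(x_i^{1/p^n})^{p^n}=x_i\in\fp$ together with primeness of $\fp$ forces $x_i^{1/p^n}\in\fp$ for all $n\ge 0$. For the reverse inclusion I would take $x\in\fp$, pick $n$ with $x\in R_n$, so that $x^{p^n}\in R$ and $x^{p^n}\in\fp\cap R=\fp_0$, and write $x^{p^n}=\sum_i r_ix_i$ with $r_i\in R$. The key step is then to extract $p^n$-th roots: since $r_i^{1/p^n}$ and $x_i^{1/p^n}$ lie in $R_\infty$ (their $p^n$-th powers $r_i,x_i$ lie in $R$), the element $y:=\sum_i r_i^{1/p^n}x_i^{1/p^n}\in R_\infty$ satisfies $y^{p^n}=\sum_i r_ix_i=x^{p^n}$, using additivity and multiplicativity of the $p^n$-th power map in characteristic $p$; as $R^+$ is a domain of characteristic $p$, $p^n$-th roots are unique (from $(y-x)^{p^n}=0$), so $y=x$ and hence $x\in\sum_{i=1}^{\ell}(x_i^\infty)R_\infty$. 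This proves the first assertion.

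For the uniform bound when $\dim R<3$, observe that the $\ell$ above equals $\mu(\fp\cap R)$, so it suffices to bound $\mu(\fq)$ over all primes $\fq$ of $R$. If $\dim R\le 1$ the only primes of the local domain $R$ are $(0)$ and $\fm$, so $\mu$ is bounded by the embedding dimension of $R$; if $\dim R=2$, Lemma \ref{SV} supplies an integer $N$ bounding the number of generators of every prime ideal of $R$. Taking such an $N$ and padding the generating set of $\fp\cap R$ with zeros (note $(0^\infty)R_\infty=0$), every prime $\fp$ of $R_\infty$ can be written as $\sum_{i=1}^{N}(x_i^\infty)R_\infty$, which is the desired $\fp$-independent choice of $\ell$.

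I expect no real obstacle; the one point needing care is that the $p^n$-th root extraction must be performed inside the domain $R^+$ rather than formally, and that one must check $r_i^{1/p^n},x_i^{1/p^n}\in R_\infty$, which is immediate from the definition of $R_\infty$.
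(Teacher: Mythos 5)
Your proof is correct, and it is more direct than the paper's. The core step you use --- writing $x^{p^n}=\sum_i r_ix_i$ with $r_i\in R$ and then extracting $p^n$-th roots via Frobenius additivity to get $x=\sum_i r_i^{1/p^n}x_i^{1/p^n}$ --- is exactly the computation the paper carries out inside the proof of Lemma \ref{alef0} to show $\fp_n=(x_1^{1/p^n},\dots,x_\ell^{1/p^n})R_n$, and the paper's proof of Lemma \ref{aux} cites that computation. Where the two arguments diverge is what they do with it. The paper first observes (via Lemma \ref{wel} iii)) that $\sum_i(x_i^\infty)$ is radical, reduces to proving $\rad(\sum_i(x_i^\infty))=\fp$, and then argues in each $R_n$ using a finite primary decomposition of $\fa_n$ together with the Incomparability Theorem to show $\fa_n=\fp_n$. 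You instead prove the two inclusions of $\fp=\sum_i(x_i^\infty)R_\infty$ directly: one via primeness of $\fp$ and the identity $(x_i^{1/p^n})^{p^n}=x_i$, the other via the Frobenius-root step above. This bypasses Lemma \ref{wel} iii), the primary decomposition, and Incomparability entirely; in fact the paper only needed the inclusion $\fp_n\subseteq\fa_n$, which follows at once from $\fp_n=(x_1^{1/p^n},\dots,x_\ell^{1/p^n})R_n$ and $x_i^{1/p^n}\in\fa_n$, so the incomparability argument there is establishing a stronger equality than required. Your treatment of the uniform bound for $\dim R<3$ (splitting off the trivial $\dim R\le 1$ case and invoking Lemma \ref{SV} when $\dim R=2$, then padding with zeros) is also correct and slightly more careful than the paper's one-line appeal to Lemma \ref{SV}.
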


\begin{proof} If $\fp=0$, there is nothing to prove. So
let $\fp$ be nonzero. Let $\underline{x}:=x_1,\cdots,x_{\ell}$ be a
generating set for $\fp_0:=\fp\cap R$. When $\dim R<3$ by Lemma
\ref{SV}, $\ell$ can be chosen such that it does not  depend on the
choice of $\fp$. Clearly, we have
$$\rad((x_1^{\infty})+\cdots+(x_{\ell}^{\infty}))\subseteq\fp. \  \  (\ast)$$
By Lemma \ref{wel} iii), it is enough to show that
$\rad((x_1^{\infty})+\cdots+(x_{\ell}^{\infty}))=\fp$. For each
positive integer $n$, set $R_n:=\{x\in R_\infty|x^{p^n}\in R\}$ and
$\fa_n:=\rad((x_1^{\infty})+\cdots+(x_{\ell}^{\infty}))\cap R_n$.
Recall that $R_\infty=\bigcup R_n$. It remains to recall $\fp\cap R_n\subseteq \fa_n$ and this completes the proof.
\end{proof}

Now, we are ready to prove Theorem 1.2.

\begin{theorem}
Let $(R,\fm)$ be a  Noetherian  local domain  of prime
characteristic.
\begin{enumerate}
\item[i)] Any prime ideal of $R_{\infty}$ has a bounded free resolution of countably generated
free $R_{\infty}$-modules.
\item[ii)] If $\dim R<3$, then $\sup\{\pd_{R_{\infty}}(R_{\infty}/\fp):
\fp\in\Spec(R_{\infty})\}<\infty$.
\item[iii)]
If $R_{\infty}$ is coherent (this holds if $R$ is regular), then
$\gd(R_{\infty})\leq\dim R+1$.
\item[iv)]   If $R$ is regular and of dimension one, then $\gd(R_{\infty})=2$.
\end{enumerate}
\end{theorem}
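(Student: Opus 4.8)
The plan is to assemble parts i)--iv) from the structural lemmas of this section fed into the homological machinery of Section~2; essentially all the work has been front-loaded, so the proof is mostly assembly. For i), I would use Lemma~\ref{aux} to write an arbitrary prime $\fp$ of $R_{\infty}$ as $\sum_{i=1}^{\ell}(x_i^{\infty})R_{\infty}$ with $x_i\in R_{\infty}$, and then quote Lemma~\ref{gen1}~i): $R_{\infty}/\sum_{i=1}^{\ell}(x_i^{\infty})R_{\infty}$ already has a free resolution by countably generated free $R_{\infty}$-modules of length $\le 2\ell$, which is exactly the claim. For ii), the extra input of Lemma~\ref{aux} in the case $\dim R<3$ is that $\ell$ may be chosen \emph{independent} of $\fp$ (this is where Lemma~\ref{SV} is used); feeding that into Lemma~\ref{gen1}~ii) gives $\pd_{R_{\infty}}(R_{\infty}/\fp)\le\ell+1$ with $\ell$ uniform, so $\sup\{\pd_{R_{\infty}}(R_{\infty}/\fp):\fp\in\Spec(R_{\infty})\}\le\ell+1<\infty$.

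For iii), assume $R_{\infty}$ is coherent; by Lemma~\ref{alef0} it is also $\aleph_{0}$-Noetherian, and it is quasi-local with unique maximal ideal $\fm_{R_{\infty}}$. With $x_1,\dots,x_d$ a system of parameters of $R$ ($d=\dim R$), Corollary~\ref{local} gives $\fm_{R_{\infty}}=\sum_{i=1}^{d}(x_i^{\infty})R_{\infty}$, so by Lemma~\ref{wel}~vii) $\fd_{R_{\infty}}(R_{\infty}/\fm_{R_{\infty}})\le d$, i.e.\ $\Tor^{R_{\infty}}_{i}(-,R_{\infty}/\fm_{R_{\infty}})=0$ for all $i>d$. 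For a finitely generated ideal $\fa$ the module $R_{\infty}/\fa$ is finitely presented by coherence, so Lemma~\ref{g}~ii) applied at the only maximal ideal gives $\pd_{R_{\infty}}(R_{\infty}/\fa)\le d$, hence $\fd_{R_{\infty}}(R_{\infty}/\fa)\le d$; then Lemma~\ref{g}~i) yields $\wdim R_{\infty}\le d$, and Lemma~\ref{j}~iii) (with $n=0$) gives $\pd_{R_{\infty}}(R_{\infty}/\fb)\le\fd_{R_{\infty}}(R_{\infty}/\fb)+1\le d+1$ for every ideal $\fb$, so $\gd R_{\infty}\le d+1$ by Lemma~\ref{g}~iii). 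For the parenthetical assertion: when $R$ is regular, $R_{\infty}=\bigcup_n R_n$ is the directed union of the Noetherian regular local rings $R_n=\{x:x^{p^n}\in R\}$ (as in the proof of Lemma~\ref{dir}~ii)) with flat transition maps by Kunz's theorem, so $R_{\infty}$ is coherent by Lemma~\ref{dir1}.

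For iv), let $R$ be regular of dimension one, i.e.\ a discrete valuation ring with uniformiser $t$; the upper bound $\gd R_{\infty}\le 2$ is the case $d=1$ of iii) (equivalently Lemma~\ref{dir}~ii)). For the lower bound it suffices, by Lemma~\ref{g}~iii), to show $\pd_{R_{\infty}}(R_{\infty}/\fm_{R_{\infty}})=2$, where $\fm_{R_{\infty}}=(t^{\infty})R_{\infty}$ by Corollary~\ref{local}; since this projective dimension is $\le\gd R_{\infty}\le 2$, I must only exclude its being $\le1$. If it were, the kernel $\fm_{R_{\infty}}$ of $R_{\infty}\to R_{\infty}/\fm_{R_{\infty}}$ would be projective, hence free over the quasi-local ring $R_{\infty}$, hence principal since $R_{\infty}$ is a domain (a free ideal of a domain is principal, as noted in the proof of Lemma~\ref{key}); write $\fm_{R_{\infty}}=cR_{\infty}$. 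Take the valuation $v\colon R^{+}\to\mathbb{Q}\cup\{\infty\}$ of Lemma~\ref{wel}~i), which is $\ge0$ on $R^{+}$ with $v(t)>0$; then every nonzero element of $\fm_{R_{\infty}}=(t^{1/p^{n}}:n\ge0)R_{\infty}$ is a finite $R_{\infty}$-combination of the $t^{1/p^{n}}$ and hence has positive value, so $v(c)>0$. Choosing $n$ with $v(t)/p^{n}<v(c)$ gives $t^{1/p^{n}}=cs$ for some $s\in R_{\infty}$, whence $v(t)/p^{n}=v(c)+v(s)\ge v(c)$, a contradiction. Thus $\pd_{R_{\infty}}(R_{\infty}/\fm_{R_{\infty}})=2$ and $\gd R_{\infty}=2$. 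The delicate points are this non-principality argument in iv) and, in iii), extracting the sharp bound $\dim R+1$ rather than mere finiteness, by routing the coherence hypothesis through Lemma~\ref{g}~ii) at the unique maximal ideal together with the $\aleph_{0}$-Noetherian estimate of Lemma~\ref{j}~iii).
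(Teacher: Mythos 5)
Your proposal is correct, and for parts i)--iii) it follows essentially the paper's route: write $\fp=\sum_{i=1}^{\ell}(x_i^{\infty})R_{\infty}$ via Lemma~\ref{aux}, feed into Lemma~\ref{gen1}; for ii) use the uniformity of $\ell$ from Lemma~\ref{SV}; for iii) combine coherence, the $\aleph_0$-Noetherian property from Lemma~\ref{alef0}, and the flat-dimension bound from Lemma~\ref{wel}~vii)/Corollary~\ref{local} through Lemma~\ref{g}~ii), Lemma~\ref{g}~i), and Lemma~\ref{j}~iii) --- this is precisely the content of the paper's terse ``incorporate these observations along with the proof of Lemma~\ref{dir}~i)''. (In ii) you invoke Lemma~\ref{gen1}~ii) and obtain $\ell+1$ where the paper uses Lemma~\ref{gen1}~i) and obtains $2\ell$; both give the required finiteness.)

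The genuine deviation is the lower bound in iv). The paper observes that $R_{\infty}$ is a directed union of discrete valuation rings, hence a valuation domain, which by Lemma~\ref{alef0} is $\aleph_0$-Noetherian and by Lemma~7.1 is non-Noetherian; Osofsky's Lemma~\ref{j}~i) then produces an ideal requiring $\aleph_0$ generators, hence of projective dimension exactly $1$, forcing $\gd R_{\infty}\geq 2$. You instead pin down the specific ideal $\fm_{R_{\infty}}=(t^{\infty})R_{\infty}$ and run the same valuation computation the paper used for $R^{+}$ in the proof of Lemma~\ref{key}~i): projectivity over the quasi-local domain $R_{\infty}$ would force $\fm_{R_{\infty}}$ principal, contradicting the fact that it contains elements $t^{1/p^{n}}$ of arbitrarily small positive value. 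Your argument is more elementary and self-contained (it avoids Osofsky's cardinality theorem for valuation domains and the non-Noetherianity lemma) and additionally identifies a concrete witness for $\gd R_{\infty}=2$, namely $\pd_{R_{\infty}}(R_{\infty}/\fm_{R_{\infty}})=2$; the paper's route is shorter because it leans on the already-available black box that for valuation domains projective dimension of ideals is governed exactly by generator cardinality.
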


\begin{proof}  We prove $i)$ and $ii)$ at the same time.
Let $\fp$ be a nonzero prime ideal of $R_\infty$. By Lemma
\ref{aux}, there exists a finite sequence $x_1,\cdots,x_{\ell}$ of
elements of $R_{\infty}$ such that
$\fp=\sum_{i=1}^{\ell}(x_i^{\infty})$. Also, when $\dim R<3$, the
integer $\ell$ can be chosen such that it does not depend on the
choice of $\fp$. Putting this along with Lemma \ref{gen1} i), we get
that $\pd_{R_{\infty}}(R_{\infty}/\fp)\leq 2\ell$.

iii) By Lemma \ref{alef0} $R_{\infty}$ is $\aleph_0$-Noetherian.
Also, $R_{\infty}$ is coherent. Incorporate these observations along
with the proof of Lemma \ref{dir} i) to see that
$\gd(R_{\infty})\leq\dim R+1$.

iv)   Due to the proof of Lemma \ref{dir} ii), we know  that
$R_{\infty}$ is a directed  union of discrete valuation domains, and
so $R_{\infty}$ is a $\aleph_0$-Noetherian valuation ring. By Lemma
7.1, $R_{\infty}$ is  not Noetherian. The only sticky point is to
recall from Lemma \ref{j} i) that $\pd \fa = n+1$ exactly if $\fa$
can be generated by $\aleph_n$ elements, where $\fa$ is an ideal of
$R_{\infty}$.
\end{proof}

\begin{example} i) Let $\mathbb{F}$ be a perfect field of characteristic $2$.
Consider the ring $R:=\mathbb{F}[[x^2,x^3]]$. This ring is not
regular. Note that $x\in R^+$ and $x^2\in R$. Thus $x\in R_1:=\{r\in
R^+:r^2\in R\}$. In particular, $R\subseteq\mathbb{F}[[x]]\subseteq
R_1$, and so $R_{\infty}\subseteq
(\mathbb{F}[[x]])_{\infty}\subseteq R_{\infty}$. Thus, in view of
Theorem 7.10 iv), $R_{\infty}$ is coherent and
$\gd(R_{\infty})=2<\infty$.

ii) Let $A$ be  a domain of finite global dimension on prime
spectrum. If any quadratic polynomial with  coefficient in $A$ has a
root in $A$, then $A$ is of finite global dimension on radical
ideals. Indeed, let $\fa:=\bigcap_{i\in I}\fp$ be a radical ideal of
$A$, where $\{\fp_i:i\in I\}$ is a family of prime ideals of $A$. By
\cite[Theorem 9.2]{HH1}, we know that $P:=\sum _{i\in I}\fp_i$ is
either a prime ideal or it is equal to $A$. Therefore, the desired
claim follows from the following short exact sequence of $A$-modules
$$0\longrightarrow A/ \fa\longrightarrow \bigoplus_{i\in I}
A/\fp_i\longrightarrow A/ P \longrightarrow 0.
$$
\end{example}

\begin{definition}
Let $(R,\fm)$ be a  Noetherian  local domain  of prime
characteristic $p$ and let $M$ be an $R_{\infty}$-module.
\begin{enumerate}
\item[i)] Let
$v_{\infty}:R_{\infty}\longrightarrow\mathbb{Q}\bigcup\{\infty\}$ be
the restriction of
$v:R^{+}\longrightarrow\mathbb{Q}\bigcup\{\infty\}$ to $R_{\infty}$.
This  map  provides a torsion theory for $R_{\infty}$. We denote it
by $\mathfrak{T}^{\infty}_v$.
\item[ii)]Let $x$ be an element of $R_{\infty}$. The
$R_{\infty}$-module $M$ is called almost zero with respect to $x$,
if $x^{1/p^n} M=0$ for all $n\in \mathbb{N}$. The notion
$\mathfrak{T}^{\infty}_x$  stands for the torsion theory of almost
zero $R_{\infty}$-modules with respect to $x$.
\item[iii)] Let $\fm_{R_{\infty}}$ be the unique maximal ideal of $R_{\infty}$.
One has $\fm_{R_{\infty}}^2=\fm_{R_{\infty}}$. The
$R_{\infty}$-module $M$ is called almost zero with respect to
$\fm_{R_{\infty}}$, if $\fm_{R_{\infty}}M=0$. The notion
$\mathcal{GR}^{\infty}$ stands for the torsion theory of almost zero
$R_{\infty}$-modules with respect to $\fm_{R^{\infty}}$.
\end{enumerate}
\end{definition}

As we mentioned before,  any torsion theory determines a functor. In
the next result we examine the cohomological dimension and
cohomological depth of functors induced by torsion theories of
Definition 7.12.

\begin{corollary}
Let $(R,\fm)$ be a  Noetherian  local domain  of prime
characteristic $p$ and let $x$ be a nonzero element of
$\fm_{R_{\infty}}$.
\begin{enumerate}
\item[i)] $\cd(\mathfrak{T}^{\infty}_x)=2$.
\item[ii)] $\cd(\mathcal{GR}_{\infty})\leq  \dim R+1$.
\item[iii)] If $R$ is regular, then $\cd(\mathfrak{T}^{\infty}_v)\leq \dim
R+1$.
\item[iv)]Let $M$ be an $R_{\infty}$-module with the property that
$\Ext^i_{R_{\infty}}(R_{\infty}/ \fm_{R^{\infty}},M)\neq 0$ for some
$i$. Then
$$\inf\{i\in\mathbb{N}_0:\textbf{R}^i\mathfrak{T}^{\infty}_v
(M)\neq0\}\in\{0,1,2\}.$$
\item[v)] Let $\epsilon$ be a positive nonrational real number and
let $\fa^\infty_{\epsilon}:=\{x\in R_\infty| v(x)> \epsilon\}$. Then
$\Ass_{R_\infty}(R_\infty / \fa^\infty_{\epsilon})=\emptyset$.
\end{enumerate}
\end{corollary}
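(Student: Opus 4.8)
The plan is to transcribe the arguments of Sections~4--6 to the perfect closure, with Lemma \ref{gen1} and Lemma \ref{alef0} taking over the roles played there by Theorem \ref{main} and the elementary facts about $R^{+}$. For ii), the Gabriel filtration of $\mathcal{GR}^{\infty}$ is $\{\fm_{R_{\infty}},R_{\infty}\}$ because $\fm_{R_{\infty}}$ is maximal, so $\textbf{R}^{i}\mathcal{GR}^{\infty}(-)\cong\Ext^{i}_{R_{\infty}}(R_{\infty}/\fm_{R_{\infty}},-)$ and $\cd(\mathcal{GR}^{\infty})=\pd_{R_{\infty}}(R_{\infty}/\fm_{R_{\infty}})$, which is at most $\dim R+1$ by Lemma \ref{gen1} ii). For i), exactly as in the proof of Proposition \ref{x} ii), the singleton $\{(x^{\infty})R_{\infty}\}$ is cofinal in the Gabriel filtration of $\mathfrak{T}^{\infty}_{x}$, so $\cd(\mathfrak{T}^{\infty}_{x})=\pd_{R_{\infty}}(R_{\infty}/(x^{\infty})R_{\infty})$; the bound $\leq 2$ is Lemma \ref{gen1} with $\ell=1$. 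For the matching lower bound I would rerun the proof of Lemma \ref{key} ii) over $R_{\infty}$: with $F_{0}$ free on $\{e_{n}:n\geq 0\}$ and $e_{n}\mapsto x^{1/p^{n}}$, and $F_{1}$ free on the elements $\eta_{n}':=e_{n}-x^{(p-1)/p^{n+1}}e_{n+1}$, the sequence $0\to F_{1}\to F_{0}\to(x^{\infty})R_{\infty}\to 0$ is a free resolution (the verification uses only that $R_{\infty}$ is a domain closed under $p$-th roots), so $\pd_{R_{\infty}}((x^{\infty})R_{\infty})\leq 1$; moreover $(x^{\infty})R_{\infty}$ is not projective, since $R_{\infty}$ is quasi-local, hence a projective $R_{\infty}$-module is free, and a free ideal of the domain $R_{\infty}$ is principal and so finitely generated --- contradicting that $xR_{\infty}\subsetneq x^{1/p}R_{\infty}\subsetneq\cdots$ is strictly increasing with union $(x^{\infty})R_{\infty}$ (the chain is strict because $v$ is positive on the nonzero elements of $\fm_{R_{\infty}}$, so $x^{(1-p)/p^{n+1}}\notin R_{\infty}$). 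Thus $\pd_{R_{\infty}}((x^{\infty})R_{\infty})=1$, and then $\pd_{R_{\infty}}(R_{\infty}/(x^{\infty})R_{\infty})=2$ from the exact sequence $0\to(x^{\infty})R_{\infty}\to R_{\infty}\to R_{\infty}/(x^{\infty})R_{\infty}\to 0$, so $\cd(\mathfrak{T}^{\infty}_{x})=2$.

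Part iii) is immediate: if $R$ is regular then $R_{\infty}$ is coherent, so $\gd(R_{\infty})\leq\dim R+1$ by Theorem 7.10 iii); and since $\textbf{R}^{i}\mathfrak{T}^{\infty}_{v}(-)\cong\varinjlim_{\fa}\Ext^{i}_{R_{\infty}}(R_{\infty}/\fa,-)$ while $\Ext^{i}_{R_{\infty}}(R_{\infty}/\fa,-)=0$ for $i>\gd(R_{\infty})$, it follows that $\cd(\mathfrak{T}^{\infty}_{v})\leq\dim R+1$. For iv) I would follow the proof of Proposition 6.5 i): the inclusions $\mathcal{GR}^{\infty}\subseteq\mathfrak{T}^{\infty}_{x}\subseteq\mathfrak{T}^{\infty}_{v}$, immediate from the definitions since $x\in\fm_{R_{\infty}}$, give $\mathfrak{T}^{\infty}_{v}-\depth_{R_{\infty}}(M)\leq\mathfrak{T}^{\infty}_{x}-\depth_{R_{\infty}}(M)\leq\mathcal{GR}^{\infty}-\depth_{R_{\infty}}(M)$; the hypothesis on $M$ makes $\mathcal{GR}^{\infty}-\depth_{R_{\infty}}(M)=\inf\{i:\Ext^{i}_{R_{\infty}}(R_{\infty}/\fm_{R_{\infty}},M)\neq 0\}$ finite, whence $\mathfrak{T}^{\infty}_{x}-\depth_{R_{\infty}}(M)$ is finite and therefore at most $\cd(\mathfrak{T}^{\infty}_{x})=2$ by i), so $\mathfrak{T}^{\infty}_{v}-\depth_{R_{\infty}}(M)\in\{0,1,2\}$.

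For v), I would transplant the proof of Proposition 6.8 i), establishing first the $R_{\infty}$-analogues of Lemma \ref{6.6.1} and Lemma \ref{6.6.2} i): (a) $R_{\infty}/\fp\in\mathfrak{T}^{\infty}_{v}$ for every nonzero prime $\fp$ of $R_{\infty}$, because a nonzero $x\in\fp$ has all its $p^{n}$-th roots in $\fp$, of value $v(x)/p^{n}\to 0$; and (b) $R_{\infty}/\fa^{\infty}_{\epsilon}$ lies in the torsion-free class $\mathcal{F}^{\infty}_{v}$ of $\mathfrak{T}^{\infty}_{v}$ for irrational $\epsilon>0$, because (as in Lemma \ref{6.6.2} i), using that $v$ is $\mathbb{Q}$-valued) a nonzero torsion element of $R_{\infty}/\fa^{\infty}_{\epsilon}$ would force $\epsilon<v(ax)=v(a)+v(x)<\epsilon$. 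Now let $\fp\in\Ass_{R_{\infty}}(R_{\infty}/\fa^{\infty}_{\epsilon})$, say $\fp=(0:_{R_{\infty}}m)$ with $m\neq 0$; since $\fa^{\infty}_{\epsilon}$ contains a nonzero element $z$ (for instance a high power of a nonzero element of $\fm$) and is an ideal, $z$ annihilates $m$, so $\fp\neq 0$. From $R_{\infty}/\fp\hookrightarrow R_{\infty}/\fa^{\infty}_{\epsilon}$, part (b), and closure of $\mathcal{F}^{\infty}_{v}$ under submodules, $R_{\infty}/\fp\in\mathcal{F}^{\infty}_{v}$; but (a) gives $R_{\infty}/\fp\in\mathfrak{T}^{\infty}_{v}$, and a nonzero module cannot be simultaneously torsion and torsion-free. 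This contradiction proves $\Ass_{R_{\infty}}(R_{\infty}/\fa^{\infty}_{\epsilon})=\emptyset$.

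The steps that require genuine argument rather than bookkeeping are the lower bound in i) --- equivalently, the non-projectivity of $(x^{\infty})R_{\infty}$ --- and the verifications (a) and (b) in v), where only $p$-power roots are available in $R_{\infty}$; neither is a serious difficulty, since $p$-power roots are all those arguments use. The weight of the corollary is carried by Lemma \ref{gen1}, Lemma \ref{alef0} and Theorem 7.10, and the remainder is a routine transfer of the $R^{+}$-arguments of Sections~4--6.
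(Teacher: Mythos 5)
Your proof is correct and follows the same approach the paper intends: the paper's own proof of this corollary simply says that parts i), ii) ``immediately follow from Lemma~\ref{gen1}'', that iii) is immediate from the global--dimension bound in Theorem~\ref{perfect}, and that iv), v) ``follow by repeating the proofs of'' Propositions~6.5 and~6.8~i); you have spelled out exactly those repetitions, with the correct $R_{\infty}$-analogues of Lemmas~\ref{6.6.1} and~\ref{6.6.2}~i). Two small observations are worth recording. First, for i) the paper asserts $\cd(\mathfrak{T}^{\infty}_x)=\pd_{R_{\infty}}(R_{\infty}/(x^{\infty})R_{\infty})=2$ citing only Lemma~\ref{gen1}, which furnishes just the upper bound $\leq 2$; the matching lower bound does need an argument, and you supply one. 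Your non-projectivity argument for $(x^{\infty})R_{\infty}$ is also a touch cleaner than a literal transcription of Lemma~\ref{key}~ii): rather than localizing at a height-one prime (which would require an analogue of Lemma~\ref{wel}~vi) for $R_{\infty}$ that is not available), you use directly that $R_{\infty}$ is quasi-local and that the chain $xR_{\infty}\subsetneq x^{1/p}R_{\infty}\subsetneq\cdots$ is strictly increasing because $v$ is positive on $\fm_{R_{\infty}}$, so $(x^{\infty})R_{\infty}$ is not principal, hence not free, hence not projective. This is the right adaptation. Second, the paper's reference ``Theorem 1.2 iv)'' in the proof of iii) is evidently a misprint for iii) of that theorem, and your argument uses the correct statement.
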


\begin{proof} i) and ii) are immediately follows by Lemma \ref{gen1},
because
$$\cd(\mathfrak{T}^{\infty}_x)=\pd_{R_{\infty}}(R_{\infty}/(x^{\infty})R_{\infty})=2$$ and
$$\cd(\mathcal{GR}_{\infty})=\pd_{R_{\infty}}(R_{\infty}/\fm_{R_{\infty}}).$$

iii)  is trivial by Theorem 1.2 iv).

iv)  follows by repeating the proof of Proposition 6.5.

v) follows by repeating the proof of Proposition 6.8 i). \end{proof}

\section{Application: Homological impossibilities over
non-Noetherian rings}

In this section we list some  homological properties of Noetherian
local rings which are not hold over general commutative rings. Our
main tool for doing this is Theorem \ref{main}. Recall that a ring
is called regular, if each of its finitely generated ideal has
finite projective dimension. For example,  valuation domains are
regular. Over a Noetherian local ring $(R,\fm)$, one  can check that
$\injdim_{R} (R/\fm)=\pd_{R} (R/\fm)$. By the following, this not be
the case in the context of non-Noetherian rings, even if the ring is
coherent and regular.

\begin{example}
Let $(R,\fm)$ be a Noetherian complete local domain  of prime
characteristic. Let $E$ be the injective envelope of
$R^+/\fm_{R^+}$.
\begin{enumerate}
\item[i)] Let $F$ be an $R^+$-module. Then $F$ is  flat
if and only if the functor $-\otimes_{R^+}F$ is exact. This is the
case if and only if the functor
$$\Hom_{R^+}(-\otimes_{R^+}F,E)\cong \Hom_{R^+}(-,\Hom_{R^+}(F,E))$$
is exact.  Or equivalently, $\Hom_{R^+}(F,E)$ is an injective
$R^+$-module.
\item[ii)]Recall from Lemma \ref{wel} vii) that $R^+/\fm_{R^+}$ has a flat
resolution $\textbf{F}_\bullet$ of length bounded by $\dim R$. In
view of i) and by an application of the functor  $\Hom_{R^+}(-,E)$
to $\textbf{F}_\bullet$, we get to an injective resolution of
$\Hom_{R^+}(R^+/\fm_{R^+},E).$ Note that $R^+/\fm_{R^+}$ is a direct
summand  of $\Hom_{R^+}(R^+/\fm_{R^+},E)$. So, $\injdim_{R^+}
(R^+/\fm_{R^+})\leq
\injdim_{R^+}(\Hom_{R^+}(R^+/\fm_{R^+},E))\leq\dim R$.

\item[iii)] Assume that  $\dim R=1$. We show that $\injdim_{R^+}
(R^+/\fm_{R^+})=1$ and $\pd_{R^+} (R^+/\fm_{R^+})=2$. To see this,
in light of Theorem \ref{main} and parts i) and ii), it is enough to
prove that $R^+/\fm_{R^+}$ is not an injective $R^+$-module. But, as
$\Hom_{R^+}(R^+/\fm_{R^+},E)\cong R^+/\fm_{R^+}$ is not a flat
$R^+$-module, it turns out that $R^+/\fm_{R^+}$ is not  an injective
$R^+$-module.
\end{enumerate}
\end{example}

\begin{lemma}\label{4.2.2}
Let $R$ be a  domain and let $\fa$ be a radical ideal of $R^+$. Then
$\fa=\fa^n$ for all $n\in\mathbb{N}$.
\end{lemma}

\begin{proof}
It is enough to show that $\fa=\fa^2$. Let $x\in\fa$. Then the
polynomial $f(X)=X^2- x\in R^+[X]$ has a root $\zeta\in R^+$. As
$\fa$ is radical, from $\zeta^2=x\in\fa$, we deduce that $\zeta\in
\fa$. So $x=\zeta. \zeta\in\fa^2$.
\end{proof}

The following  gives some results on  local cohomology modules in
the context of non-Noetherian rings.

\begin{example}
\begin{enumerate}
\item[i)] Recall that for a Noetherian  ring $A$, Grothendieck's
Vanishing Theorem \cite[Theorem 3.5.7(a)]{BH} asserts that
$H^{i}_{\fa}(-):=\underset{n}{\varinjlim}\Ext^{i}_{A} (A/\fa^{n},
-)=0$ for all $i>\dim A$ and all ideals $\fa$ of $A$.  Now, let
$(R,\fm)$ be a one-dimensional Noetherian complete local domain of
prime characteristic. By Lemma \ref{4.2.2}, we observe that
$\fm_{R^+}^{n}=\fm_{R^+}$ for all $n\in\mathbb{N}$. We incorporate
this observation with Theorem \ref{main}, to see that
$$H^{2}_{\fm_{R^+}}(-)=\underset{n}{\varinjlim}\Ext^{2}_{R^+}
(R^+/\fm_{R^+}^{n}, -)=\Ext^{2}_{R^+} (R^+/\fm_{R^+}, -)\neq0.$$
Therefore, Grothendieck's Vanishing Theorem does not true for
non-Noetherian rings, even if the ring is coherent and regular.
\item[ii)]  Let $(A,\fm)$ be a Noetherian local ring  and let $M$ be
a finitely generated $A$-module. Grothendieck's non Vanishing
Theorem \cite[Theorem 3.5.7(b)]{BH} asserts that $H^{\dim
M}_{\fm}(M)\neq 0$. Let $(V,\fm_V)$ be a valuation domain with the
value group $\mathbb{Z}\bigoplus\mathbb{Z}$ such that its maximal
ideal $\fm_V=vV$ is principal. Such a ring exists, see \cite[Page
79]{M}. Recall from \cite[Definition 2.3]{Sch} that a sequence
$\underline{x}$ is called weak proregular if
$H^i_{(\underline{x})}(-)\cong H^{i}_{\underline{x}}(-)$ for all
$i$, where $H^{i}_{\underline{x}}(M)$ denotes  the $i$-th cohomology
module of $\check{C}ech$ complex of $M$ with respect to
$\underline{x}$.   Keep in mind that over integral domains any
nonzero element $x$ is weak proregular. Note that $V$ is
non-Noetherian, since its value group  does not isomorphic with
$\mathbb{Z}$. Also, recall that a ring is Noetherian if and only if
each of its prime ideals are finitely generated. Thus $d:=\dim
V\ge2$, and so $H^d_{\fm_V}(V)\cong H^{d}_{\underline{v}}(V)=0$.
Therefore, Grothendieck's non Vanishing Theorem  does not true for
non-Noetherian rings, even if the ring is coherent and regular.

\item[iii)] Let $(R,\fm)$ be a $1$-dimensional Noetherian complete
 local domain of prime characteristic  and let $v$ be a non-zero
 element of $\fm_{R^+}$. Clearly,  $H^{2}_{\underline{v}}(-)=0$. Recall
 that $H^i_{(v)}(-)\cong H^{i}_{\underline{v}}(-)$ for all $i$.
 Thus $H^2_{(v)}(-)=0$. Clearly, $\rad((v)R^+)=\fm_{R^+}$. By
 part ii), $H^2_{\fm_{R^+}}(-)\neq0$. Therefore,  $H^{i}_{\fa}(-)$
  and $H^{i}_{\rad(\fa)}(-)$ are not necessarily isomorphic, where
   $\fa$ is an ideal of a general commutative ring.
\end{enumerate}
\end{example}

Next, we exploit Example 8.2 ii) to show that the Intersection
Theorem is not true for a general commutative ring, even if the ring
is coherent and regular.

\begin{example} Let $(R,\fm)$ be a Noetherian local ring and
$M, N \neq0$ are finitely generated $R$-modules such that
$M\otimes_R N$ has finite length. Recall from \cite[Section 9.4]{BH}
that  the Intersection Theorem asserts  that $\dim N \leq \pd M$.
Now, let $(V,\fm_V)$ be the valuation domain of Example 8.2 ii).
Then $\dim V\ge2$. Now, consider the $V$-modules $M:=V/ \fm_V$ and
$N:=V$. Clearly, $\ell(M\otimes_V N)=1$. But $\dim N\ge2$ and $\pd
M=1$. Thus Intersection Theorem is not true for coherent and regular
rings.
\end{example}

\section{Some questions}

Theorem \ref{main} gives no information when $\mathbb{Q}\subseteq
R$. So,  we ask the following:

\begin{question}
Let $(R,\fm)$ be a Noetherian complete local domain of
equicharacteristic zero. Is $\pd_{R^+} (R^+/\fm_{R^+})<\infty$?
\end{question}

Theorem \ref{perfect} gives some partial answers to:
\begin{question}
Let $(R,\fm)$ be a Noetherian local domain of prime characteristic.
Let $\fa$ be an ideal of $R_{\infty}$. Under what conditions
$\pd_{R_{\infty}} (R_{\infty}/\fa)< \infty$?
\end{question}

Proposition 6.4 gives no information when $\dim R\geq2$. So, we ask
the following:

\begin{question}
Let $(R,\fm)$ be a Noetherian complete local domain of dimension
greater than one. Is $\cd(\mathfrak{T}_v)$ bounded?
\end{question}

Let $A$ be a ring and $\mathfrak{T}$  a torsion theory of
$A$-modules.  We say that an $A$-module $M$  has finite almost
(flat) projective dimension with respect to $\mathfrak{T}$ if there
exists the following complex of (flat) projective $R^+$-modules
$$\textbf{P}_\bullet:0\lo P_n\lo\cdots \lo P_1\lo P_0\lo 0$$ such that
$H_0(\textbf{P}_\bullet)=M$ and
$H_i(\textbf{P}_\bullet)\in\mathfrak{T}$ for all $i>0$. We say that
a ring $A$ is almost regular with respect to $\mathfrak{T}$ if any
module has finite almost projective dimension with respect to
$\mathfrak{T}$.

\begin{example}
Let $(R,\fm)$ be a three-dimensional complete local domain of mixed
characteristic. Recall that $H_{\underline{m}}^{i}(R^+)$ is the
$i$-th cohomology module of $\check{C}ech$ complex of $R^+$ with
respect to a generating set of $\underline{\fm}$. By Lemma \ref{wel}
v), $R^{+}=\bigcup R'$, where the rings $R'$ are Noetherian and
normal. Any normal ring satisfies the Serre's condition
$\mathbb{S}_2$. Hence, any length two system of parameters for $R$
is a regular sequence for $R'$, and so for $R^+$. Thus, the
classical grade of $\underline{\fm} R^+$ on $R^+$ is greater than
$1$. Consequently,
$H^1_{\underline{m}}(R^+)=H^0_{\underline{m}}(R^+)=0$, because
$\check{C}ech$ grade is an upper bound for the classical grade, see
e.g. \cite[Proposition 2.3(i)]{AT1}. Now, let $x$ be a nonzero
element of $\fm$. In light of \cite[Theorem 0.2]{He2}, we see that
$x^{1/n}$ kills $H^2_{\underline{m}}(R^+)$ for arbitrarily large
$n$. By inspection of $\check{C}ech$ complex of $R^+$ with respect
to $\underline{\fm} R^+$, we see that $H^3_{\underline{m}}(R^+)$ has
finite almost flat dimension with respect to $\mathfrak{T}_x$.
\end{example}

\begin{question}
Let $(R,\fm)$ be a Noetherian local domain. Let $\mathfrak{T}_v$ be
as Definition 6.1 i) and $\mathfrak{T}_v^{\infty}$ be as Definition
7.12. i).
\begin{enumerate}
\item[i)] Assume that $R$ is complete. Is $R^+$ almost regular
with respect to $\mathfrak{T}_v$?
\item[ii)] Assume that $R$ has prime characteristic.
Is $R_{\infty}$ almost regular with respect to
$\mathfrak{T}^{\infty}_v$?
\end{enumerate}
\end{question}

\section{Appendix}

In this appendix we show that  bounds of Theorem 1.2 are sharp.

\begin{lemma} \label{aux} Let $(R,\fm)$ be a $d$-dimensional Noetherian local domain  of prime characteristic $p$. Then any radical ideal of $R^{\infty}$ is of the form $\sum_{i=1}^{d}(\alpha_{i}^{\infty})$ for some $\alpha_1,\ldots,\alpha_{d}\in R^{\infty}$.
\end{lemma}

\begin{proof}
Let $\fa$ be a radical ideal of $R^{\infty}$ and set $\fb:=\fa\cap R$.  Clearly, $\fb$ is radical. In view of \cite[Theorem 9.13]{I1},  there is a finite sequence $\underline{\alpha}:=\alpha_1,\ldots,\alpha_{d}$ of elements of $R$ such that $\sqrt{\underline{\alpha}R}=\sqrt{\fb}=\fb$. Suppose $x\in \fa$. Then $x^{p^m}\in R\cap \fa=\fb$ for some integer $m$. It yields that $x^{p^n}=r_1\alpha_1+\cdots+r_d\alpha_d$ for some integer $n$ where $r_i\in R$. By taking $p^n$-th root, $x=r_1^{1/p^n}\alpha_1^{1/p^n}+\cdots+r_d^{1/p^n}\alpha_d^{1/p^n}$. So $x\in \sum_{i=1}^{d}(\alpha_{i}^{\infty})$.
\end{proof}

By  $R^{+}$, we mean the integral closure of  a domain $R$ in an algebraic closure of the fraction field of $R$.

\begin{proposition}  \label{perfect1}
Let $(R,\fm)$ be a  Noetherian  local domain  of prime characteristic. The following holds:
\begin{enumerate}
\item[$\mathrm{(i)}$] If $A$ is either $R^{\infty}$ or $R^+$, then $\sup\{\fd_{A}(A/\fa):\fa \textit{ is a radical ideal }\}\leq \dim R$.
\item[$\mathrm{(ii)}$]  We have $\sup\{\pd_{R_{\infty}}(R_{\infty}/\fa):\fa \textit{ is a radical ideal }\}\leq \dim R+1$.
\end{enumerate}
\end{proposition}

\begin{proof}
$\mathrm{(i)}$: Let $\fa$ be a radical ideal of $A$. In the case $A=R^{\infty}$ the claim follows by Lemma \ref{aux}. In the other case, recall that $R^{+}=\bigcup R_\gamma$, where $R_\gamma$ is module-finite extension of $R$ and  $R^{+}={\varinjlim}R_\gamma^\infty$.  Set $\fa_\gamma:=\fa \cap R_\gamma^\infty$.  Clearly,  $\fa_\gamma$ is radical. There is a natural isomorphism between the rings ${\varinjlim}R_\gamma^\infty/ \fa_\gamma$ and $A/\fa$. Similarly, ${\varinjlim}R_\gamma/ \fa_\gamma\cong A/\fa$. To conclude, it remains to recall from  \cite[VI, Exercise 17]{CE} that $$\underset{i}{\varinjlim}\Tor_j^{R_\gamma^\infty}(R_\gamma^\infty/ \fa_\gamma,-)\cong\Tor_j^{R^{+}}(R^{+}/ \fa,-).$$

$\mathrm{(ii)}$: In view of  Lemma \ref{aux}, the claim follows by Lemma 7.7.
\end{proof}

A ring $R$ is called F-coherent if $R^{\infty}$ is a coherent ring.
Regular rings are examples of F-coherent. Let us recall
the following from \cite{AB}.

\begin{lemma} \label{a}
Let $R$ be an $F$-coherent reduced ring. 
Assume that $R$ is either excellent or  homomorphic image of a Gorenstein local ring. Then $R^{\infty}$ is big Cohen-Macaulay.
\end{lemma}

\begin{corollary}\label{S1}
Let $R$ be a  Noetherian local F-coherent domain.  The following holds:
\begin{enumerate}
\item[$\mathrm{(i)}$]
 If $R$ is complete, then $R^{\infty}$ is the perfect closure of a normal F-finite domain.
\item[$\mathrm{(ii)}$] If $R$ is $1$-dimension, then   $R^{\infty}$ is a valuation domain.
\end{enumerate}
\end{corollary}

\begin{proof}
$\mathrm{(i)}$ By \cite[Corollary 6.2.10]{Gl}, coherent regular rings are greatest common divisor and so  integrally closed. By Theorem 1.2, $R^{\infty}$ is integrally closed.
Let $F$ be the fraction field of $R$ and set $A:=F\cap R^{\infty}$. Take $x\in A^{\infty}$. Then $x^{p^n}\in A$ for some $n$. Due to definition of $A$, $x^{p^n}\in R^{\infty}$. Thus $x^{p^{n+m}}\in R$ for some $m$. So $x\in R^{\infty}$, i.e., $A^{\infty}= R^{\infty}$. We know that $A$ is Noetherian, because $R$ is complete. This yields that $A$ is a complete local normal domain.  By replacing $R$ with $A$, we may assume that $R$ is complete and normal. It remains to apply the Gamma construction $R^{\Gamma}$ (for definition and more details on tight closure see \cite{H2}). Then $R\to R^{\Gamma}$ is  purely inseparable extension and $R^{\Gamma}$ is F-finite. Also, $R^{\Gamma}$ is domain and  is flat over $R$. Since the prefect closure of $R$ and $R^{\Gamma}$ are the same, we need to show  $R^{\Gamma}$ is normal. $R$ satisfies Serre's condition $(R_1)$  and $(S_2)$.   Fibers of $R\to R^{\Gamma}$ are Gorenstein. Thus $R^{\Gamma}$ satisfies $(S_2)$. If $I$ defines the singular locus of $R$, then $IR^{\Gamma}$ defines the singular locus of $IR^{\Gamma}$. This yields that $R^{\Gamma}$ satisfies $(R_1)$, since $R$ satisfies $(R_1)$. By applying Serre characterization of normality, $R^{\Gamma}$ is normal.

$\mathrm{(ii)}$ First note that normalization of $R$ is Noetherian, since $R$ is $1$-dimension. By applying the proof of part $(i)$, we can assume that $R$ is normal. Hence, $R$ is discrete valuation domain. For each positive integer $n$, set $R_n:=\{x\in R^\infty|x^{p^n}\in R\}$, where $p$ is characteristic of $R$. One may find easily that $R_n$ is discrete valuation domain. Direct union of tower of valuation domain is a valuation domain. This along with $R^\infty=\bigcup R_n$,  yields the claim.
\end{proof}

\begin{lemma}  \label{v}(see \cite[Theorem 6.2.15]{Gl})
Let $(A,\fm_A)$ be a quasilocal coherent ring such that $\Wdim(A)=\gd(A)<\infty$. Then $\fm_A$ is finitely generated.
\end{lemma}

\begin{lemma}  \label{sy}
Let $A$ be a  coherent ring and $M$ a finitely presented $A$-module of finite projective dimension. Then
$\pd(M)=\fd(M)$.
\end{lemma}

\begin{proof}
First note that $n:=\fd(M)<\infty$. We prove the claim by induction on $n$. In the case $n=0$, there is no thing to prove, since finitely presented flat modules are projective. Due to the coherent assumption, we know that syzygy modules are finitely presented. To prove the claim in general case, it remains deal with the syzygies.
\end{proof}

Let $\fa$ be an ideal of a ring $A$ and $M$ an $A$-module. Denote the classical grade  of $\fa$ on $M$,  by $\cgrade_A(\fa,M)$. The polynomial grade of $\fa$ on $M$ is defined by $$\pgrade_A(\fa,M):=\underset{m\rightarrow\infty}{\lim}\cgrade_{A[t_1, \ldots,t_m]}(\fa A[t_1, \ldots,t_m],A[t_1,\ldots,t_m]\otimes_A M).$$
Recall that if $(A,\fm_A)$ is  quasilocal  and $M$ has a finite free resolution, then $$\pd_A(M) +\pgrade_A(\fm_A,M) = \pgrade_A(\fm_A,A).$$

\begin{theorem}
Let $R$ be a  Noetherian local F-coherent domain which is either excellent or homomorphic image of a Gorenstein local ring.  If $R$ is not a field, then $\gd(R^{\infty})=\dim R+1$.
\end{theorem}

\begin{proof}  Let $\underline{x}:=x_1,\ldots,x_d$ be a system of parameters for $R$. By Lemma \ref{a}, $\underline{x}$ is regular sequence on $R^{\infty}$. By Auslander-Buchsbaum formula  and Lemma  \ref{sy}, $\fd(R^{\infty}/ \underline{x}R^{\infty})=d.$ Combining this with  Lemma \ref{a}, $\Wdim(R^{\infty})= \dim R$. Now suppose on the contrary that $\gd(R^{\infty})\neq\dim R+1$. It turns out that $\gd(R^{\infty})=\dim R$. Note that $R^{\infty}$ is quasilocal. Call its maximal ideal by $\fm_{R^{\infty}}$. In light of Lemma~\ref{v}, we see that  $\fm_{R^{\infty}}$ is finitely generated. One may find easily that $\fm_{R^{\infty}}=\fm_{R^{\infty}}^2$.  By Nakayama's Lemma, $\fm_{R^{\infty}}=0$. This is the case if and only if $R$ is a field. This contradiction shows that  $\gd(R^{\infty})=\dim R+1$.
\end{proof}

\begin{example}\label{elg}In this exanple we study global dimension of certain perfect domains.
\begin{enumerate}
\item[$\mathrm{(i)}$]
Let $F$ be an algebraically closed field of prime characteristic and let $(A,\fm_A)$ be the localization of $F[X_1,X_2,\ldots]$ at $(X_1,X_2,\ldots)$. Set $(R, \fm_R)$ be the perfect closure of $A$. Here we show that $\fd_R(R/ \fm_R)=\infty$. To this end, let $(A_i,\fm_i)$ be the localization of $F[X_1,\ldots,X_i]$ at $(X_1,\ldots,X_i)$. By applying \cite[Corollary 8.2.8]{BH}, $A_i^\infty$ is flat over  $A_i$. This yields that  ${\varinjlim}A_i^\infty$ is flat over  ${\varinjlim}A_i$, i.e.,  $R$ is flat over $A$. Hence, $$\Tor_i^{A}(A/ \fm_A,A/\fm_A)\otimes_AR\cong\Tor_i^{R}(R/ \fm_R,R/\fm_R).$$ Thus, to conclude its enough to show that $\Tor_j^{A}(A/ \fm,A/\fm)\neq0$ for all $j$. Let $i$ be a non negative integer. Then, $\Tor_i^{A_i}(A_i/ \fm_i,A_i/\fm_i)\neq0$. By using rigidity of Tor's modules, we have $\Tor_j^{A_i}(A_i/ \fm_i,A_i/ \fm_i)\neq0$ for any $j\leq i$. So $A_i/ \fm_i$ can be embedded in $\Tor_j^{A_i}(A_i/\fm_i,A_i/ \fm_i)$. One can prove that $\underset{i}{\varinjlim}A_i/\fm_i\cong A/\fm$. Note that $$0\neq\underset{i}{\varinjlim}A_i/\fm_i\hookrightarrow\underset{i}{\varinjlim}\Tor_j^{A_i}(A_i/\fm_i,A_i/ \fm_i)\cong\Tor_j^{A}(A/ \fm,A/\fm),$$
which yields the claim.
\item[$\mathrm{(ii)}$]  Recall that torsion free modules over valuation domains are flat. It turns out that Frobenius map over a valuation domain of prime characteristic is flat. Now, let $V$ be a valuation of prime characteristic with infinite global dimension.
\item[$\mathrm{(iii)}$]
The regularity of $R$ in the main result is really needed. Consider the 1-dimensional ring
$R:=\mathbb{F}_3[x,y]/(y^2-x^3-x^2)$. One can shows that
$\gd(R_{\infty})>2$ (see \cite[Example 7.7]{AS}).
\end{enumerate}
\end{example}

\begin{acknowledgement}  I would like to thank  P. Roberts  for valuable suggestions  on the earlier
version of this paper. I also thank M. Tousi and K. Divaani-Aazar
for their  help as well as the referee for suggestions improving the
presentation of the results.
\end{acknowledgement}

%%%%%%%%%%%%%%%%%%%%%%%%%%%%%%%%%%%%%%%%%%%%%%%%%%%

\end{document}